\DeclareMathOperator*{\argmin}{arg\,min}
\DeclareMathOperator*{\argmax}{arg\,max}
\theoremstyle{plain}
\newtheorem{thm}{Theorem}[section]
\newtheorem{theorem}{Theorem}[section]
\newtheorem{lem}[thm]{Lemma}
\newtheorem{lemma}[thm]{Lemma}
\newtheorem{proposition}[thm]{Proposition}
\theoremstyle{definition}
\newtheorem{definition}{Definition}[section]
\theoremstyle{remark}
\newtheorem{remark}{Remark}[section]
\renewcommand{\epsilon}{\varepsilon}
\providecommand{\RR}{\mathbb{R}}
\providecommand{\cE}{\mathcal{E}}
\providecommand{\cF}{\mathcal{F}}
\providecommand{\cG}{\mathcal{G}}
\providecommand{\cP}{\mathcal{P}}
\providecommand{\cS}{\mathcal{S}}
\renewcommand{\a}{\alpha}
\renewcommand{\b}{\beta}
\renewcommand{\d}{\delta}
\providecommand{\e}{\epsilon}
\providecommand{\ve}{\varepsilon}
\renewcommand{\l}{\lambda}
\providecommand{\s}{\sigma}
\providecommand{\Om}{\Omega}
\renewcommand{\t}{\tau}
\providecommand{\vp}{\varphi}
\providecommand{\vphi}{\varphi}
\providecommand{\norm}[1]{\left\| #1 \right\|}
\DeclareMathOperator{\spt}{spt}
\DeclareMathOperator{\tr}{Tr}
\providecommand{\norm}[1]{\lVert #1 \rVert}
\newcommand{\HC}{{\rm{HC}}}
\newcommand{\hc}{{\rm{HC}}}
\newcommand{\ov}{\overline}
\newcommand{\N}{\mathbb{N}}
\newcommand{\R}{\mathbb{R}}
\def\dd{\,{\rm d}} 
\newcommand{\ds}{\displaystyle}
\newcommand{\weakly}{\rightharpoonup}
\newcommand{\weaklys}{\stackrel{\star}{\rightharpoonup}}
\newcommand{\da}{\downarrow}
\newcommand\ac{{\rm ac}}
\renewcommand\ae{{\rm a.e.}}
\newcommand{\mres}{\mathbin{\vrule height 1.6ex depth 0pt width
0.13ex\vrule height 0.13ex depth 0pt width 1.3ex}}
\newcommand{\sD}{\mathscr{D}}
\newcommand{\sL}{\mathscr{L}}
\newcommand{\sH}{{\mathscr H}}
\newcommand{\sM}{{\mathscr M}}
\newcommand{\sP}{{\mathscr P}}
\title{Weak solutions to the Muskat problem with surface tension via optimal transport}
\author[M. Jacobs]{Matt Jacobs}  
\address{Department of Mathematics, UCLA, 520 Portola Plaza, Los Angeles, CA 90095, USA}
\email{majaco@math.ucla.edu} 
\author[I. Kim]{Inwon Kim}  
\address{Department of Mathematics, UCLA, 520 Portola Plaza, Los Angeles, CA 90095, USA}
\email{ikim@math.ucla.edu} 
\author[A.R. M\'esz\'aros]{Alp\'ar R. M\'esz\'aros}  
\address{Department of Mathematical Sciences, Durham University, South Road, DH1 3LE, Durham, UK}
\email{alpar.r.meszaros@durham.ac.uk}
\begin{document}

\begin{abstract}
   Inspired by recent works on the threshold dynamics scheme for multi-phase mean curvature flow (by Esedo\={g}lu-Otto and Laux-Otto), we introduce a novel framework to approximate solutions of the Muskat problem with surface tension.  Our approach is based on interpreting the Muskat problem as a gradient flow in a product Wasserstein space.   This perspective allows us to construct weak solutions via a minimizing movements scheme.    Rather than working directly with the singular surface tension force,  we instead relax the perimeter functional with the heat content energy approximation of Esedo\={g}lu-Otto.   The heat content energy allows us to show the convergence of the associated minimizing movement scheme in the Wasserstein space, and  makes the scheme far more tractable for numerical simulations.   Under a typical energy convergence assumption, we show that our scheme converges to weak solutions of the Muskat problem with surface tension.  We then conclude the paper with a discussion {on some numerical experiments and} on equilibrium configurations.  
\end{abstract}

\maketitle

\section{Introduction}
The Muskat problem was first introduced by Morris Muskat \cite{muskat_original} as a model for the flow of two immiscible fluids through a porous medium.  Since its introduction, this problem has received sustained attention in a variety of fields.  It is used to model flows in oil reservoirs (water is injected into the oil well to drive oil extraction), and in hydrology to model flows of groundwater through aquifers.    

In this paper we are interested in obtaining the global existence of weak solutions for the Muskat problem with surface tension, based on its gradient flow structure. We begin by introducing a variational formulation of the problem, which will motivate our subsequent analysis.  The fluid evolution can be written as Darcy's law
\begin{equation}\label{eq:velocity} v_i+b^{-1}_i\nabla \d_{\rho_i}\cE(\bm{\rho})=0,\end{equation}
coupled with the continuity equation
\begin{equation}\label{eq:density}\partial_t \rho_i +\nabla \cdot (\rho_i v_i)=0,\end{equation}
where $v_i$ is the velocity of phase $i$, $\bm{\rho}=(\rho_1,\rho_2)$ is the collection of relative concentrations for each phase,  $\nabla (\d_{\rho_i}\cE)$ denotes the {spacial gradient} of the classical first variation of the {free energy} with respect to $\rho_i$, and $b_i>0$ ($i=1,2$) denotes constant mobilities. For convenience, throughout the rest of the paper, we will refer to $\bm{\rho}$ as a collection of density functions; however, one should note that $\bm{\rho}$ only encodes information about the volume occupied by the fluids and nothing about their mass.

\medskip

The physical setting for our problem is a bounded, convex open domain $\Omega\subset \RR^d$ with smooth boundary.  We shall suppose that the two fluids fill the entire domain, and that they are confined to $\Omega$ for all time.  We then take the internal energy to be a sum of three distinct terms:
\begin{equation} \cE(\bm{\rho})=\cE_p(\bm{\rho})+\cE_s(\bm{\rho})+\Phi(\bm{\rho}). \end{equation}
The first term in the energy, describing incompressibility and containment of the fluids,  is given by
\begin{equation}
\cE_p(\bm{\rho})\label{eq:incompressibility}=
\begin{cases}
0, \quad\quad \textrm{if} \;  \rho_1(x)+\rho_2(x)=1 \; \textrm{for a.e.} \, x\in\Omega \\
+\infty, \quad \textrm{otherwise}.
\end{cases}
\end{equation}
The immiscibility of the fluids and the surface tension force arise from the highly non-convex interaction energy
\begin{equation}
\cE_s(\bm{\rho})\label{eq:interaction_energy}=
\begin{cases}
\frac{\sigma}{2}|D\rho_1|(\Omega)+\frac{\sigma}{2}| D\rho_2|(\Omega), \quad \textrm{if} \;  \rho_1,\rho_2\in BV(\Om;\{0,1\})\  {\rm{and}}\ \rho_1(x)\rho_2(x)=0 \; \textrm{for a.e.} \, x\in\Omega \\
+\infty, \qquad\qquad\qquad\qquad\quad\ \  \textrm{otherwise},
\end{cases}
\end{equation}
where $|D\rho_i|(\Omega)$ denotes the total variation of $\rho_i$ in $\Omega$ and $\sigma>0$ is a surface tension constant.  Finally, $\Phi(\bm{\rho})$ denotes the potential energy of the fluid configuration, i.e. 
$$
\Phi(\bm{\rho})=\int_\Om\Phi_1\dd\rho_1+\int_\Om\Phi_2\dd\rho_2,
$$
where $\Phi_1,\Phi_2:\Om\to\R$ are given Lipschitz continuous potentials. A typical example is when one assumes these to 
be gravitational potentials, i.e.
\begin{equation}\label{eq:gravitational_potential_energy}
\Phi_i= g_ix\cdot e_d, \quad g_i >0,\ \  i=1,2  
\end{equation} 
where $g_i$'s are proportional to the specific gravity of each fluid.

Although the internal energy is singular, when $\rho_1$ and $\rho_2$ are separated by a smooth interface $\Gamma:= \partial\{\rho_1>0\}\cap \partial\{\rho_2>0\}$, one can formulate a classical solution to the Muskat problem equations (\ref{eq:velocity}-\ref{eq:density}). In the classical solution, the flow is driven by the pressure variables $p_i$ for each phase, which are Lagrange multipliers generated by $\cE_p$ above. The continuity equation becomes 
\begin{equation}\label{continuity}
\partial_t\rho_i - b^{-1}_i\nabla\cdot \big((\nabla p_i + \nabla \Phi_i)\rho_i\big) = 0\tag{MP$_1$}
 \end{equation}
and the pressure is determined by solving the free boundary problem
\begin{equation}\label{eq:Muskat}
\left\{\begin{array}{lll}
-\Delta p_i = \Delta \Phi_i &\hbox{ in } &\spt(\rho_i);\\ \\
\partial_n  (p_i + \Phi_i) =0 &\hbox{ on } &\partial\Omega; \\ \\
V = b_1^{-1}\partial_{n} (p_1 + \Phi_1) = b_2^{-1}\partial_{n}(p_2+\Phi_2) &\hbox{ on }& \Gamma ;\\ \\
\left[p\right]:=(p_1-p_2) = \frac{\sigma}{2}\kappa & \hbox{ on } &\Gamma; \\ \\
\tilde{n}=n & \hbox{on} & \partial \Gamma\cap \partial\Omega;\\
\end{array}\right.\tag{MP$_2$}
\end{equation} 
where $\kappa$ denotes the mean curvature of $\Gamma$, oriented to be positive when $\{\rho_2>0\}$ is convex at the point, $n$ denotes the outer normal along $\partial \Omega$ and along $\Gamma$, and $\tilde{n}$ denotes the co-normal vector  orthogonal to $\partial \Gamma$ and tangential to $\Gamma$.  Note that the final condition relating the co-normal vector at $\partial\Gamma\cap\partial \Omega$ to the normal vector of $\partial \Omega$ implies that $\Gamma$ must meet $\partial\Omega$ orthogonally (see Lemma \ref{lem:consistency} and Remark \ref{rmk:boundary} for the weak formulation of this condition). To summarize the ideas in the formal derivation of \eqref{continuity}-\eqref{eq:Muskat} from \eqref{eq:velocity}-\eqref{eq:density} using the definition of $\cE$, we heuristically have $\nabla\d_{\rho_i}\cE_p=\nabla p_i$, $\nabla\d_{\rho_i}\Phi=\nabla \Phi_i$ while the contribution of $\nabla\d_{\rho_i}\cE_s$ will act only on $\Gamma$ in the form of the curvature $\kappa$.

Problems like \eqref{eq:Muskat} received a lot of attention in the past decades. Most of the works focus on the zero surface tension model ($\sigma=0$) and well-posedness of regular solutions with graph property \cite{Amb,CasCorFefGan,ConCorGanStr,ConCorGanRodStr,CorCorGan}. In the presence of surface tension, the problem has stronger regularity properties in stable settings \cite {PruSim}, but still, topological singularities can occur in finite time, for instance when heavier fluid is placed on top of the lighter one \cite{EsMat}.  Thus, our aim is to construct global-in-time weak solutions to the Muskat problem \eqref{eq:Muskat}, which exist past the formation of singularities.

\medskip

To construct global-in-time solutions, we exploit the gradient flow structure of the Muskat problem.  As noted by Otto in \cite{Ott, otto_geometry_porous_media}, Darcy's law can be approximated by the Euler-Lagrange equation for the minimizing movements scheme (or JKO \cite{JKO} scheme) with time step size $\tau>0$,
 \begin{equation}
\label{eq:general_minimizing_movements} \bm{\rho}^{n+1}=\argmin_{\bm{\rho}} \left\{\cE(\bm{\rho})+\sum_{i=1}^2\frac{b_i}{2\tau} W_2^2(\rho_i, \rho_i^n)\right\}
\end{equation} 
where $W_2(\rho_i, \rho_i^n)$ denotes the 2-Wasserstein or 2-Monge-Kantorovich distance.
In this context, the squared $W_2$ distance has a physical interpretation as the energy dissipated by friction as the fluids flow through the porous media.

Let us note that Wasserstein gradient flows of energies involving total variation terms have been considered before in the literature, though only in the case of one phase models (see e.g. \cite{MatMcCSav, CarPoo}), and hence with no incompressibility or interaction constraints.  As a result, the techniques developed in those papers do not appear to be applicable here --- the constrained two phase setting adds many additional difficulties.  

Indeed, it is not easy to obtain a complete characterization of the solutions to the minimizing movements problem (\ref{eq:general_minimizing_movements}).
   The interaction energy (\ref{eq:interaction_energy}) is sufficiently non-convex that problem (\ref{eq:general_minimizing_movements}) is non-convex for \emph{any} $\tau>0$.  As a result, one must be careful in using duality to introduce the pressure as a Lagrange multiplier.  Furthermore, we are interested in developing a scheme which could be used for numerical implementations.  The formulation (\ref{eq:general_minimizing_movements}) is poorly suited for numerical methods. Optimizing over the non-convex constraint set $\{\rho_1, \rho_2\in BV(\Omega; \{0,1\}): \rho_1(x)\rho_2(x)=0\; \textrm{a.e.}\}$ is extremely difficult.  For these reasons, we instead consider a relaxed version of minimizing movements scheme inspired by \cite{EO} and \cite{ LauOtt}.   
\medskip

\medskip

\medskip

\noindent {\bf Approximation of the perimeter by the Heat Content}

\medskip

In our analysis, we replace the interaction energy $\cE_s$ in \eqref{eq:general_minimizing_movements} by the {\it heat content energy}
 \begin{equation}\label{def:HC}
\textrm{HC}_{\epsilon}(\bm{\rho}):= \sigma\sqrt{\frac{2\pi}{\e}}\int_{\Omega} (G_{\epsilon} \star\rho_1)(x)\dd\rho_2(x)=\sigma\sqrt{\frac{2\pi}{\e}}\int_{\Omega} \int_{\RR^d} G(z) \dd\rho_1(x+\sqrt{\epsilon} z)\dd\rho_2(x).
\end{equation}
Here $G_\e:\R^d\to\R$ stands for the standard heat kernel (with mean 0 and variance $\e>0$) and the densities $\rho_i$ are assumed to be defined on all of $\RR^d$ by extending them to zero off of $\Omega$. {Let us notice that in \cite{EO} and \cite{LauOtt}, for similar purposes the authors use periodic extensions. The analysis in both cases and the validity of results using both kinds of extensions is essentially the same.}

{Dating back to the work of De Giorgi, approximate perimeter energies have been used in the literature to study geometric variational problems (see for instance \cite{MirPalParPre} and \cite{AlbBel}). The use of the heat content energy to study the multi-phase mean curvature flow was first introduced by Esedo\={g}lu and Otto in \cite{EO}.}   It was observed in \cite{EO} that the threshold dynamics, a well known numerical scheme for mean curvature motion introduced by Merriman, Bence, and Osher \cite{MBO92},  is precisely a minimizing movements scheme for the heat content energy. Esedo\={g}lu and Otto also showed that $\textrm{HC}_\e$ $\Gamma$-converges (with respect to the $L^1$ topology) to $\cE_s$ as $\e\to 0$. Building off of these results, Laux and Otto showed in \cite{LauOtt} that under an energy convergence assumption, the threshold dynamics scheme produces weak solutions to the multi-phase motion by mean curvature in the limit $\epsilon\to 0$. 

Our goal is to consider such a framework in the context of the Muskat problem by studying the minimizing movements scheme 
\begin{equation}
\label{eq:hc_minimizing_movements} \bm{\rho}^{n+1}=\argmin_{\bm{\rho}} \left\{\cE_\e(\bm{\rho})+\sum_{i=1}^2\frac{b_i}{2\tau} W_2^2(\rho_i, \rho_i^n)\right\},
\end{equation} 
where we used the notation 
$$\cE_\e(\bm{\rho}):=\cE_p(\bm{\rho})+ \HC_{\epsilon}(\bm{\rho})+\Phi(\bm{\rho}).$$
\\
As we alluded above,  the scheme (\ref{eq:hc_minimizing_movements}) has a number of numerical advantages over (\ref{eq:general_minimizing_movements}). Unlike $\mathcal{E}_s$ which is neither convex nor concave, the heat content is a strictly concave functional of the densities.  This concavity can be exploited to simplify numerical implementations, along the same lines as the linearization trick noted in Subsection 5.1 of \cite{EO}.  After applying this trick, the resulting variational problem becomes convex, and thus, can be efficiently solved using the recently introduced back-and-forth method \cite{JacLeg}.   See Figures \ref{fig:small_drop}-\ref{fig:ripped_drop}, for a demonstration of the numerical performance of the scheme.

Although the heat content in principle allows mixing of the phases, we shall show that the discrete in time solutions constructed by the JKO scheme always stay unmixed with a sharp interface between the phases for all time  (see Proposition \ref{prop:characteristic} below). This phenomenon is due to the fact that $\HC_\e$ behaves like a strictly concave functional (see Lemma \ref{lem:conc}). Thus, one retains the essential properties of the Muskat problem evolution. 

 In the context of the Muskat problem, the heat content also has a natural physical interpretation.   In a discrete statistical mechanics model with $N$ particles, surface tension can be seen to arise from short range interactions between particles in different phases (cf. \cite{irving_kirkwood}). Typically, the discrete surface tension takes the form 
$$\frac{1}{N^2}\sum_{i\in P_1, j\in P_2}V(|x_i-x_j|)=\frac{1}{N^2}\sum_{i\in P_1,j\in P_2}\int_{\RR^d}\int_{\RR^d} V(|x-x'|)\delta(x-x_i)\delta(x'-x_j)$$
where $P_r$ is the particle index in each phase $r$, $V$ is some decreasing function, and $x_i$ is the location of particle $i$ (\cite{irving_kirkwood}). By taking the limit $N\to\infty$ in above formula, one obtains an analogue of the heat content energy where the kernel $G_{\e}$ is replaced with $V$.   Here it is worth noting that we choose to work with the heat kernel for computational convenience, indeed a different choice of kernel may be more physically relevant. 

Finally, let us also emphasize that the heat content approach can be naturally extended to the multiphase Muskat problem evolution (with any number of phases), without incurring any additional difficulties (just as in the case of \cite{EO} and \cite{LauOtt}).  This includes scenarios where the surface tension force depends on the phases that are interacting \cite{EO}. To present our ideas in the simplest possible way, we do not pursue the multiphase case in this paper.

\medskip
 
\noindent {\bf Statement of our main results}

\medskip

From the relaxed minimizing movements scheme (\ref{eq:hc_minimizing_movements}), we obtain a sequence of discrete in time approximations to the Muskat flow.  When we take the time step $\tau$ and the heat content approximation parameter $\epsilon$ to zero together, we hope to recover weak solutions to the Muskat problem.  Our main results show that this is indeed the case under the assumption that there is no loss of perimeter when passing from discrete to continuous solutions. For the precise statements of the convergence results we refer to Theorem \ref{thm:MAIN} and Theorem \ref{thm:limits}.  In addition, we show that our weak formulation (see Definition \ref{def:weak_sol}) encodes all of the conditions in \eqref{continuity}-\eqref{eq:Muskat} (see Lemma \ref{lem:consistency} and Remark \ref{rmk:boundary}).

\begin{theorem}\label{thm:main} Let $T>0$ be a fixed time horizon, let $\e,\t>0$ be fixed and let $(\rho_1^{\e,\t},\rho_{2}^{\e,\t})$ be the discrete in time interpolations  of the densities obtained from the minimizing movements scheme \eqref{eq:hc_minimizing_movements}. Let moreover $p^{\e,\t}$ stand for the discrete in time interpolations between the scalar pressure fields, obtained as Lagrange multipliers associated to the incompressibility constraint in \eqref{eq:hc_minimizing_movements}. Then
\begin{itemize} 
\item There exists a family $(A^{\e,\t}_t)_{t\in[0,T]}\subseteq\Om$ of measurable sets such that $\rho_{1}^{\e,\t}(t,\cdot)=\chi_{A^{\e,\t}_t}$ and $\rho_{2}^{\e,\t}(t,\cdot)=\chi_{\Om\setminus A^{\e,\t}_t}$.\\ 

\item There exists $\rho_i\in L^1([0,T];BV(\Om;\{0,1\}))\cap {\rm{AC}}^2([0,T];\sP(\Om))$ ($i=1,2$) such that as $\max\{\e,\t\}\da 0$ and along a subsequence $(\rho_{1}^{\e,\t}, \rho_{2}^{\e,\t})\to (\rho_1,\rho_2)$ strongly in $L^1([0,T]\times\Om)\times L^1([0,T]\times\Om)$. Moreover, $\rho_1, \rho_2\in L^1([0,T];BV(\Om))$  and they are also characteristic functions that sum up to one, i.e. 
\begin{equation}\label{eq:MP1}
 \rho_1(t,\cdot) =\chi_{A_t\cap \Omega}\hbox{ and }\rho_2(t,\cdot) = \chi_{\Omega\setminus A_t},
\end{equation}
for a measurable family of sets $(A_t)_{t\in[0,T]}$ which are of finite perimeter. 

\item There exists a scalar pressure field $p\in L^2([0,T]; (C^{0,\a}(\Om))^*)$ such that along a subsequence $\nabla p^{\e,\t}\weaklys\nabla p$ weakly-$\star$ in $L^2([0,T]; (C^{1}(\Om))^*)$ as $\max\{\e,\t\}\da 0$.

\item Finally, under the assumption of energy convergence
\begin{equation}\label{assumption}
\int_0^T {\rm HC}_\e(\rho_{1}^{\e,\t}, \rho_{2}^{\e,\t})\dd t \to \int_0^T \left(\sum_i \frac{\sigma}{2}\int_{\Omega} |D\rho_i|\right)\dd x\dd t,\tag{EC}
\end{equation}
$(\rho_i, v_i,p)$ with $i=1,2$ solves, in the weak sense (see Definition \ref{def:weak_sol}),  the problem \eqref{continuity}-\eqref{eq:Muskat}. Here formally $v_i$ corresponds to $-b_i^{-1}(\nabla p + \nabla \Phi_i)$.

\end{itemize}
\end{theorem}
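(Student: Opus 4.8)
The plan is to treat this as a quantitative compactness-and-passage-to-the-limit argument, organized into four logically separate packages: (i) a priori estimates uniform in $\e,\t$; (ii) compactness and identification of limits; (iii) passage to the limit in the weak formulation of the continuity/Darcy equation; and (iv) recovery of the curvature jump condition $[p]=\frac{\sigma}{2}\kappa$ from the energy convergence assumption \eqref{assumption}.

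\textbf{Step 1: A priori estimates from the JKO scheme.} Starting from the minimality of $\bm\rho^{n+1}$ in \eqref{eq:hc_minimizing_movements}, the standard comparison with the competitor $\bm\rho^n$ gives the discrete energy-dissipation inequality $\cE_\e(\bm\rho^{n+1}) + \sum_i \frac{b_i}{2\t} W_2^2(\rho_i^{n+1},\rho_i^n) \le \cE_\e(\bm\rho^n)$. Summing in $n$ produces: (a) a uniform bound on $\sup_n \HC_\e(\bm\rho^n)$ (using that $\Phi$ is bounded since the potentials are Lipschitz and $\Om$ bounded, and $\cE_p=0$ along the scheme), hence via the Esedo\=glu--Otto $\Gamma$-convergence lower bound a uniform $L^1_t BV_x$ bound on $\rho_i^{\e,\t}$; and (b) the total-squared-distance bound $\sum_i \sum_n \frac{b_i}{2\t}W_2^2(\rho_i^{n+1},\rho_i^n) \le C$, which yields ${\rm AC}^2([0,T];\sP(\Om))$ bounds on the (interpolated) curves with modulus uniform in $\e,\t$ --- this is the classical De Giorgi interpolation estimate. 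For the pressure, I would use the Euler--Lagrange equation of \eqref{eq:hc_minimizing_movements} (the Lagrange multiplier $p^{n+1}$ for the constraint $\rho_1+\rho_2=1$) to read off the discrete velocity $v_i = -b_i^{-1}(\nabla p^{n+1}+\nabla\Phi_i)$ as (the optimal map minus identity)$/\t$, and test against smooth divergence-free-ish fields to get the stated $L^2_t (C^1(\Om))^*$ bound on $\nabla p^{\e,\t}$; the $L^2_t(C^{0,\a}(\Om))^*$ bound on $p$ itself then follows from a Poincaré/normalization argument once the gradient bound is in hand. The fact that $\rho_i^{\e,\t}(t,\cdot)$ is always a characteristic function (the first bullet) is exactly Proposition~\ref{prop:characteristic} (using Lemma~\ref{lem:conc}), so I may invoke it directly.

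\textbf{Step 2: Compactness.} From the $L^1_tBV_x$ bound plus the ${\rm AC}^2$-in-$W_2$ modulus of continuity, an Aubin--Lions--type argument (BV compactly embeds in $L^1$ in space; the $W_2$-equicontinuity controls time oscillations) gives a subsequence with $\rho_i^{\e,\t}\to\rho_i$ strongly in $L^1([0,T]\times\Om)$, with $\rho_i\in L^1_tBV_x \cap {\rm AC}^2([0,T];\sP(\Om))$. Strong $L^1$ convergence preserves the pointwise constraints $\rho_i\in\{0,1\}$ and $\rho_1+\rho_2=1$ a.e., giving \eqref{eq:MP1} with $A_t$ of finite perimeter (lower semicontinuity of total variation). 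For the pressure, the uniform $L^2_t(C^1)^*$ bound on $\nabla p^{\e,\t}$ gives weak-$\star$ convergence to some $\nabla p$, and one checks that this limit is indeed the distributional gradient of the $L^2_t(C^{0,\a})^*$ limit $p$. Define $v_i := -b_i^{-1}(\nabla p + \nabla\Phi_i)$ as an element of the appropriate dual space.

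\textbf{Step 3 and the main obstacle: passing to the limit and recovering the curvature condition.} Passing to the limit in the continuity equation $\partial_t\rho_i - b_i^{-1}\nabla\cdot((\nabla p+\nabla\Phi_i)\rho_i)=0$ tested against smooth functions is mostly routine given Steps 1--2: the time-derivative term and the $\nabla\Phi_i\rho_i$ term pass by strong $L^1$ convergence of $\rho_i$, and the $\nabla p\,\rho_i$ term requires pairing a weak-$\star$ convergent sequence with a strongly convergent one, which works because $\rho_i^{\e,\t}$ is bounded and converges strongly (one must be slightly careful about the regularity mismatch and may need to mollify test functions, as is standard). The genuinely hard part --- and the heart of the theorem --- is extracting the mean-curvature jump relation $[p]=\frac{\sigma}{2}\kappa$ on $\Gamma=\partial A_t\cap\Om$ from the energy-convergence hypothesis \eqref{assumption}. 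Here I would follow the Laux--Otto strategy adapted to the Wasserstein/Muskat setting: the Euler--Lagrange equation of the JKO step encodes, via the first variation of $\HC_\e$, a discrete surface-tension force which is an $L^2$-in-space vector field; the energy-convergence assumption \eqref{assumption} upgrades the $\Gamma$-$\liminf$ inequality to a strong convergence statement (no loss of perimeter), which in turn forces the approximate "heat-content curvatures" $\HC_\e'[\rho_i^{\e,\t}]$ to converge (as distributions against tangential test vector fields) to the true mean curvature of $\partial A_t$, with the correct constant coming from $\sigma\sqrt{2\pi/\e}\int G$. Combining this with the Euler--Lagrange identification of that same force with $\nabla(p_1-p_2)$ restricted to the interface yields $[p]=\frac{\sigma}{2}\kappa$ in the weak sense of Definition~\ref{def:weak_sol}. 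The orthogonality of $\Gamma$ and $\partial\Om$ (the condition $\tilde n = n$) is built into the weak formulation via the choice of admissible test vector fields (tangent to $\partial\Om$), and is handled as in Lemma~\ref{lem:consistency} and Remark~\ref{rmk:boundary}. The delicate technical points I anticipate are: (a) justifying the exchange of limits $\e\to0$ inside the discrete-to-continuous limit when $\e$ and $\t$ go to zero simultaneously (one needs the heat-content first variation to be stable under both limits, which is why the joint limit $\max\{\e,\t\}\da 0$ along a subsequence is the natural statement); (b) controlling the curvature term only in a distributional/measure sense --- since $\kappa$ need not exist classically past singularities --- so the identity $[p]=\frac{\sigma}{2}\kappa$ must be interpreted in the integrated-against-test-vector-fields form, exactly matching Definition~\ref{def:weak_sol}; and (c) ensuring that the strong $L^1$ convergence of $\rho_i$ together with the energy convergence is enough to rule out cancellations in the non-convex surface-tension force, which is precisely the role played by the concavity of $\HC_\e$ (Lemma~\ref{lem:conc}) and the "no loss of perimeter" content of \eqref{assumption}.
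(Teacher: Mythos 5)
Your overall architecture (a priori estimates from JKO, Aubin--Lions compactness, pass to the limit in the Euler--Lagrange identity, recover the curvature condition via Laux--Otto plus \eqref{assumption}) matches the paper's strategy. But there are two concrete gaps.

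\textbf{The BV bound you invoke in Step 2 does not hold for fixed $\e$.} You claim that a uniform bound on $\HC_\e(\bm\rho^n)$, via the Esedo\={g}lu--Otto $\Gamma$-\emph{liminf}, gives a uniform $L^1_t BV_x$ bound on $\rho_i^{\e,\t}$. That inference is false at positive $\e$: for $\e>0$ fixed, a characteristic function can oscillate wildly at scale $\ll\sqrt\e$ and still have small heat content, because $\HC_\e$ only probes the interface at scale $\sqrt\e$. The paper is explicit about this (see the discussion preceding the proof of Proposition~\ref{prop:strong_limit}): Lemma~\ref{lem:estimates}(v) gives an \emph{approximate} modulus of continuity of size $\d+\sqrt\e$, which does not furnish a BV-compact set. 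Without a genuine spatial compactness ingredient, your Aubin--Lions argument collapses. The paper's fix is a real idea you are missing: work with the mollified auxiliary curve $\ov\rho_i^\t := G_\e\star\rho_i^\t$, which \emph{is} uniformly $L^1_t BV_x$ (Claim 1 in Proposition~\ref{prop:strong_limit}) and still satisfies a uniform $W_2$-modulus in time because $W_2$ is contractive under the heat flow; apply the Rossi--Savar\'e version of Aubin--Lions to $\ov\rho_i^\t$; and then transfer the strong $L^1$ limit back to $\rho_i^\t$ via the explicit estimate $\|\rho_i^\t-\ov\rho_i^\t\|_{L^1([0,T]\times\Om)}\le C T\sqrt\e\,\HC_\e(\rho_{1,0},\rho_{2,0})$, which is where the joint limit $\max\{\e,\t\}\da 0$ is crucially used.

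\textbf{The pressure term: do not try to make sense of $\rho_i\nabla p$.} You propose to ``define $v_i := -b_i^{-1}(\nabla p + \nabla\Phi_i)$ as an element of the appropriate dual space'' and to pass $\rho_i^{\e,\t}\nabla p^{\e,\t}$ to the limit by pairing a strong $L^1$ sequence against a weak-$\star$ sequence. Two problems. First, $\nabla p$ only lives in $L^2_t(C^1(\Om))^*$, a space of distributions of order one; multiplying it by the merely bounded function $\rho_i$ is not well defined, and the pairing you describe does not converge by mere strong-times-weak-$\star$ duality because $\rho_i^{\e,\t}$ is not a test function for that dual space. Second, Definition~\ref{def:weak_sol} requires $v_i\in L^2([0,T];L^2_{\rho_i})$, which your definition of $v_i$ cannot produce from a pressure of that low regularity. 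The paper sidesteps both issues structurally: in the Euler--Lagrange identity \eqref{eq:muskat_tau_eps}, the two pressure-times-density terms are summed, and $\rho_1^\t+\rho_2^\t=1$ makes the pressure appear \emph{alone} as $\nabla p^\t\cdot\xi$, so only the linear weak-$\star$ limit is needed; separately, $v_i$ is obtained directly as the velocity of the ${\rm AC}^2$ curve $\rho_i$ (Radon--Nikodym derivative of the limiting momentum $E_i$ w.r.t. $\rho_i$ via Theorem~\ref{thm:limits}(ii)), and the statement ``$v_i$ corresponds to $-b_i^{-1}(\nabla p+\nabla\Phi_i)$'' is deliberately only formal.

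Minor points: the $L^2_t(C^{0,\a})^*$ bound on $p$ comes from a Schauder elliptic estimate (test $p^\t$ against $\Delta u$ with $u\in C^{2,\a}$ and use the $L^2_t(C^1)^*$ bound on $\nabla p^\t$), not a Poincar\'e/normalization argument; and the $(C^1)^*$ gradient bound itself uses a symmetrization and change-of-variables cancellation between the two heat-content convolution terms, not ``testing against divergence-free-ish fields.''
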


It is challenging to study qualitative or geometric properties of our solutions. We will illustrate heuristically in Section \ref{sec:equilibrium} that, even for global minimizers of the energy, there are diverse possibilities depending on the values of the specific gravity and volumes of the two phases.   This is verified with several numerical simulations in Section \ref{sec:numer}.

\medskip

\noindent {\bf Remarks on our results}

\begin{itemize}
\item[(1)] Let us underline the fact that our discrete-time scheme (\ref{eq:hc_minimizing_movements}) produces minimizers that are characteristic functions of a partition of $\Om$. To prove this fact, we exploit the strict concavity of the heat content along the admissible set. This seems to be an interesting property in its own right, and ensures that numerical implementations of the scheme maintain a sharp interface at every time step.

\item[(2)] From the point of view of Wasserstein gradient flows, an interesting remark on our results is that while one cannot expect strong compactness for the interpolated densities $(\rho_{1}^{\e,\t},\rho_{2}^{\e,\t})$ in the case when $\e>0$ is fixed and $\t\da 0$, when sending both parameters to 0 in the same time, we regain the strong compactness. This phenomenon is mainly due to the fact that in the limit as $\max\{\t,\e\}\da 0$ we recover total variation estimates on the densities. This compactness is obtained via a standard Aubin-Lions type argument.

\item[(3)] The energy convergence assumption \eqref{assumption} is rather natural and the same as the ones given in \cite{LauOtt} and \cite{LucStu}.
 This assumption ensures that there is no sudden loss of boundary between phases in the limit $\e\to 0$.   If there is a loss of interface then one cannot obtain weak solutions.   Indeed, if two components of the support of $\rho_1$ merge into each other and remove a sizable part of its boundary, one can expect a discontinuous change of the pressure term $p$ in the entire domain $\Omega$, creating an inconsistency between the discrete and limiting evolutions.

Replacing the assumption with a direct argument has been discussed for mean curvature flows \cite{Tak, DeLau}. Unfortunately, these results rely strongly on certain properties of the mean curvature flow (especially the comparison principle), which do not hold for fourth order equations like the Muskat problem.

There are also results in the literature \cite{Luc, Rog} which eliminate the assumption for third order curvature driven flows (specifically the Stefan problem and the Mullins-Sekerka flow respectively). However, the solutions constructed in \cite{Luc} are discontinuous in time  (the interface may experience sudden jumps in time) and the formulation in \cite{Rog} only keeps track of the regular part of the interface. Let us also note that the Stefan problem and the Mullins-Sekerka flow are not similar to the Muskat problem.  In particular, the jump condition for the pressure across the interface is different for the Muskat problem, which leads to qualitatively different behavior.

\end{itemize}

\medskip

\noindent {\bf Paper summary}

\medskip

 The rest of the paper is organized as follows.    In Section \ref{sec:MM}, we derive the basic properties of the minimizing movements scheme (\ref{eq:hc_minimizing_movements}) and construct our discrete-time quantities.  We begin by showing that solutions to the minimization problem are characteristic functions at every time step of the discrete scheme.  We then derive the existence of pressure as a Lagrange multiplier for the incompressibility constraint and obtain the Euler-Lagrange equation for the minimization problem.   Our derivation of these equations were inspired by previous results from \cite{DiFFag,MauRouSan1,Lab17, KimMes}. In particular, the definition of the discrete in time pressure variable is very much inspired by \cite{MauRouSan1}.
 
 In Section \ref{sec:muskat}, we take $\tau$ and $\epsilon$ to zero together to obtain weak solutions to the Muskat problem, under the assumption that the internal energy of the discrete solutions converges to the internal energy of the limiting solutions.   The main task in this section amounts to showing that one can pass to the limit in the Euler-Lagrange equation obtained in Section \ref{sec:MM}.  This can be done using the standard theory for Wasserstein gradient flows if $\epsilon$ is held fixed.  However, the joint limit, $\tau, \epsilon_{\tau}\to 0$, requires an adaptation of the arguments of \cite{LauOtt} to the case of Wasserstein gradient flows.  Let us underline that one 
can rely entirely on the results of \cite{LauOtt} to pass to the limit the weak curvature equation. An adaptation of the Aubin-Lions type argument from \cite{LauOtt} can be done to get compactness of the density terms. The only difference here is that we are using $W_2$ as metric while in \cite{LauOtt} a different metric is used, but this does not impose crucial difficulties. An interesting link to the flow-exchange technique introduced in \cite{MatMcCSav}, which is a typical tool for $W_2$ gradient flows, is also pointed out. Last, we developed the necessary estimates and compactness results on the pressure terms. These are new and clearly were not present in the setting of mean curvature flows. The compactness that we get on the pressure is in the sense of distributions.
 
Finally, in Section \ref{sec:equilibrium}, we conclude the paper with a demonstration of the numerical method on several examples and a discussion on the global minimizers of the approximated internal energy associated to the Muskat problem. While this discussion remains at the heuristic level, our conjectures are supported by the equilibrium states attained in our numerical experiments (c.f. Figures \ref{fig:small_drop}-\ref{fig:ripped_drop}). We end the paper with an appendix section, where we recall the results from \cite{LauOtt} that are used when passing to the limit the weak curvature equation.

\medskip

 \medskip
 
 \medskip

\section{The Wasserstein minimizing movements scheme for the heat content}\label{sec:MM}

\subsection{Some preliminary results}

Recall that the setting for our problem is a smooth convex domain $\Omega\subset \RR^d$ and without loss of generality by scaling we assume that $\sL^d(\Om)=2$. By $\sP(\Omega)$ we denote the space of Borel probability measures on $\RR^d$ supported on $\overline{\Omega}$. $\sP^{\ac}(\Omega)$ stands for the elements of $\sP(\Omega)$ that are absolutely continuous with respect to $\sL^d\mres\Omega$. 

Let $\Phi_1,\Phi_2:\Omega\to\R$ be given Lipschitz potentials and let us recall the definition of the potential energy $\Phi:\sP(\Omega)\times\sP(\Omega)\to\R$, given as 
$$\Phi(\bm{\rho}):=\int_{\Omega}\Phi_1\dd\rho_1+\int_{\Omega}\Phi_2\dd\rho_2.$$
Let $\e>0$. We consider the heat content $\HC_\e:\sP(\Omega)\times\sP(\Omega)\to\R$ defied in \eqref{def:HC}, using the standard heat kernel $G_\e:\R^d\to\R$, i.e. 
$$G_\e(x)=\frac{1}{(4\pi\e)^{d/2}}e^{-\frac{|x|^2}{4\e}}.$$
We also use the notations $K_\e,G:\R^d\to\R$ to denote $K_\e(x)=\sigma\sqrt{\frac{2\pi}{\e}}G_\e(x)$ and $G(x)=G_1(x).$
We have the following preliminary results.

\begin{lemma}\label{lem:lambda-conv}
Let $\HC_\e$ be defined as in \eqref{def:HC}. We have the following properties. 
\begin{itemize}
\item[(1)] $\HC_\e$ is bounded from below and continuous w.r.t. the weak-$\star$ convergence on $\sP(\Omega)\times\sP(\Omega).$ 
\item[(2)] $\HC_\e$ displacement $\l$-convex on $\sP(\Omega)\times\sP(\Omega)$, with $\l=-\frac{\s\sqrt{2\pi}}{(4\pi)^{d/2}}\frac{1}{\e^{(d+3)/2}}$.
\end{itemize}
\end{lemma}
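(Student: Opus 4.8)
The plan is to handle the two parts separately: part (1) is soft functional analysis, and part (2) reduces to a pointwise Hessian bound on the kernel $K_\e(z)=\sigma\sqrt{2\pi/\e}\,G_\e(z)$ together with the standard second-variation computation for interaction functionals. Throughout I would use the reformulation
$$\HC_\e(\bm{\rho})=\int_{\overline\Om}\int_{\overline\Om}K_\e(x-y)\dd\rho_1(x)\dd\rho_2(y).$$

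\emph{Part (1).} Since $K_\e\ge 0$, we have $\HC_\e\ge 0$, so it is bounded below. For weak-$\star$ continuity, I would use that $\overline\Om$ is compact, hence $(x,y)\mapsto K_\e(x-y)$ is bounded and continuous on $\overline\Om\times\overline\Om$; then if $\rho_1^k\weaklys\rho_1$ and $\rho_2^k\weaklys\rho_2$ one has $\rho_1^k\otimes\rho_2^k\weaklys\rho_1\otimes\rho_2$ (which follows from Stone--Weierstrass: finite sums $\sum g_j(x)h_j(y)$ with $g_j,h_j\in C(\overline\Om)$ are dense in $C(\overline\Om\times\overline\Om)$, the convergence is obvious on products, and a uniform-approximation argument extends it), whence $\HC_\e(\rho_1^k,\rho_2^k)\to\HC_\e(\rho_1,\rho_2)$.

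\emph{Part (2).} I would use the standard characterization of displacement $\l$-convexity along constant-speed geodesics in the product space. Take optimal plans $\gamma_i\in\Pi(\rho_i^0,\rho_i^1)$ and set $\rho_i^t:=(\pi^t)_\#\gamma_i$ with $\pi^t(p,q)=(1-t)p+tq$; a geodesic in $\sP(\Om)\times\sP(\Om)$ is exactly such a pair (and $\rho_i^t$ is supported on $\overline\Om$ by convexity of $\Om$). Then
$$\HC_\e(\rho_1^t,\rho_2^t)=\int\int K_\e\big((1-t)(x_0-y_0)+t(x_1-y_1)\big)\dd\gamma_1(x_0,x_1)\dd\gamma_2(y_0,y_1).$$
For fixed $(x_0,x_1,y_0,y_1)$ the integrand is smooth in $t$ with second derivative $(b-a)^\top D^2K_\e\big((1-t)a+tb\big)(b-a)$, where $a=x_0-y_0$, $b=x_1-y_1$; since all points lie in $\overline\Om$ the argument of $D^2K_\e$ stays in the compact set $\overline\Om-\overline\Om$, so differentiation under the integral sign is justified by dominated convergence. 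A direct computation gives $D^2K_\e(z)=K_\e(z)\big(-\tfrac{1}{2\e}I+\tfrac{1}{4\e^2}zz^\top\big)$, whose eigenvalues are $-\tfrac{1}{2\e}K_\e(z)$ (on $z^\perp$) and $K_\e(z)\big(\tfrac{|z|^2}{4\e^2}-\tfrac{1}{2\e}\big)$ (along $z$); writing $s=|z|^2$, a one-variable analysis shows both are minimized at $z=0$, so $D^2K_\e\ge-\Lambda I$ with $\Lambda:=\tfrac{1}{2\e}K_\e(0)=\tfrac{\sigma\sqrt{2\pi}}{2(4\pi)^{d/2}}\e^{-(d+3)/2}$. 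Using $|b-a|^2\le 2|x_1-x_0|^2+2|y_1-y_0|^2$ and that the marginals are probability measures,
$$\frac{d^2}{dt^2}\HC_\e(\rho_1^t,\rho_2^t)\ge-\Lambda\int\int|b-a|^2\dd\gamma_1\dd\gamma_2\ge-2\Lambda\big(W_2^2(\rho_1^0,\rho_1^1)+W_2^2(\rho_2^0,\rho_2^1)\big),$$
and integrating this twice in $t$ yields $\HC_\e(\bm{\rho}^t)\le(1-t)\HC_\e(\bm{\rho}^0)+t\HC_\e(\bm{\rho}^1)-\tfrac{\l}{2}t(1-t)\big(W_2^2(\rho_1^0,\rho_1^1)+W_2^2(\rho_2^0,\rho_2^1)\big)$ with $\l=-2\Lambda=-\tfrac{\sigma\sqrt{2\pi}}{(4\pi)^{d/2}}\e^{-(d+3)/2}$, as claimed.

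The only step that is not purely formal is the Hessian bound, and the point requiring care is that the sharp constant is dictated by where $K_\e$ is \emph{largest}, namely $z=0$ (not where the kernel "looks most concave"); once $D^2K_\e\ge-\tfrac{1}{2\e}K_\e(0)I$ is in hand, the two-species structure of $\HC_\e$ creates no extra difficulty, since the entire second-variation argument is a one-dimensional Taylor expansion along a single curve in the product space, and the factor $2$ in splitting $|b-a|^2$ is exactly what produces the stated constant relative to the unweighted product metric.
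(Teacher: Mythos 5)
Your proof is correct and follows essentially the same route as the paper: both reduce displacement $\l$-convexity of $\HC_\e$ to the pointwise Hessian lower bound $D^2K_\e\ge -\Lambda I_d$ on the kernel and then propagate this through the second-variation computation along Wasserstein geodesics in the product space, with the factor $2$ from $|b-a|^2\le 2|x_1-x_0|^2+2|y_1-y_0|^2$ converting the classical $\Lambda$-convexity of $K_\e$ into the stated displacement constant $\l=-2\Lambda$ (the paper delegates this last implication to a citation of Lemma~2.1 in Di Francesco--Fagioli while you carry it out explicitly, and for part~(1) you likewise flesh out what the paper simply calls ``immediate''). For the Hessian bound itself the paper's route is a bit slicker than your eigenvalue-branch analysis: one observes directly that $D^2K_\e(z)+\Lambda I_d=\Lambda\big[(1-e^{-|z|^2/4\e})I_d+\tfrac{1}{2\e}\,e^{-|z|^2/4\e}\,z\otimes z\big]$, which is manifestly positive semidefinite since $e^{-|z|^2/4\e}\le 1$ and $z\otimes z\ge 0$, avoiding any need to locate the minimum of the longitudinal eigenvalue.
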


\begin{proof}
(1) Is immediate by the definition of $\HC_\e$. 

To show (2), it is enough to show that the function $z\mapsto K_\e(z)$ is $\l$-convex in the classical sense for some $\l\in\R$. We have
$$D^2 K_\e(x)=\frac{\s\sqrt{2\pi}}{2(4\pi)^{d/2}}\frac{1}{\e^{(d+3)/2}}e^{-\frac{|x|^2}{4\e}}\left(\frac{1}{2\e}x\otimes x- I_d\right).$$
Since the matrix $x\otimes x$ is positive semidefinite for any $x\in\R^d$, setting $\l=-\frac{1}{2(4\pi)^{d/2}}\frac{\s\sqrt{2\pi}}{\e^{(d+3)/2}}$, we have that the matrix $D^2K_\e(x)-\l I_d$ is positive semidefinite for any $x\in\R^d,$ which implies in particular that $K_\e$ is $\l$-convex.

We conclude similarly as in \cite[Lemma 2.1]{DiFFag} the displacement $2\l$-convexity of $\HC_\e$ on $\sP(\Omega)\times\sP(\Omega).$ 
\end{proof}

\begin{lemma}\label{lem:conc} If $P\subset \sP(\Omega)\times\sP(\Omega)$ denotes the set of pairs $(\rho_1, \rho_2)$ such that $\rho_1+\rho_2=1$ a.e. in $\Omega$, then the heat content is strictly concave along line segments in $P$.
\end{lemma}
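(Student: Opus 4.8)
The strategy is to restrict $\HC_\e$ to an arbitrary line segment contained in $P$ and to check that, as a function of the segment parameter, it is a quadratic polynomial whose leading coefficient is strictly negative whenever the two endpoints are distinct. Fix $(\rho_1^0,\rho_2^0),(\rho_1^1,\rho_2^1)\in P$, set $\rho_i^t:=(1-t)\rho_i^0+t\rho_i^1$ for $t\in[0,1]$, and put $u:=\rho_1^1-\rho_1^0$. Since $P$ is convex, $(\rho_1^t,\rho_2^t)\in P$; moreover the constraint $\rho_1+\rho_2\equiv 1$ forces $\rho_2^1-\rho_2^0=-u$, so that $\rho_1^t=\rho_1^0+tu$ and $\rho_2^t=\rho_2^0-tu$. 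Expanding \eqref{def:HC} by bilinearity,
\begin{equation*}
\HC_\e(\rho_1^t,\rho_2^t)=\HC_\e(\rho_1^0,\rho_2^0)+t\,\b-t^2\int_{\RR^d}(K_\e\star u)(x)\,u(x)\dd x
\end{equation*}
for a suitable $\b\in\R$ (namely $\b=\int(K_\e\star u)\rho_2^0-\int(K_\e\star\rho_1^0)u$). Thus $\HC_\e$ restricted to the segment is quadratic, and strict concavity along the segment reduces to showing that $\int_{\RR^d}(K_\e\star u)\,u\dd x>0$ as soon as $u\not\equiv0$ --- which is exactly the case $(\rho_1^0,\rho_2^0)\neq(\rho_1^1,\rho_2^1)$.

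This last claim is the strict positive-definiteness of the Gaussian kernel, which I would establish via the semigroup identity $G_\e=G_{\e/2}\star G_{\e/2}$ together with the evenness of $G_{\e/2}$: recalling $K_\e=\s\sqrt{2\pi/\e}\,G_\e$ with $\s\sqrt{2\pi/\e}>0$,
\begin{equation*}
\int_{\RR^d}(K_\e\star u)\,u\dd x=\s\sqrt{\tfrac{2\pi}{\e}}\int_{\RR^d}(G_{\e/2}\star G_{\e/2}\star u)\,u\dd x=\s\sqrt{\tfrac{2\pi}{\e}}\int_{\RR^d}\bigl(G_{\e/2}\star u\bigr)^2\dd x\ \ge\ 0,
\end{equation*}
with equality if and only if $G_{\e/2}\star u=0$ a.e. Finally $G_{\e/2}\star u=0$ implies $u\equiv0$: on the Fourier side $e^{-(\e/2)|\xi|^2}\,\widehat u(\xi)=0$ for a.e. $\xi$, and since the Gaussian factor never vanishes, $\widehat u\equiv0$. (Equivalently one argues in one line from Plancherel, $\int_{\RR^d}(K_\e\star u)\,u\dd x=\int_{\RR^d}\widehat{K_\e}\,|\widehat u|^2\dd\xi$, using $\widehat{K_\e}>0$ everywhere.) Hence the leading coefficient $-\int_{\RR^d}(K_\e\star u)\,u\dd x$ of the quadratic is strictly negative on every nontrivial segment, which is the assertion.

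The only point requiring a little care is that these Fourier/convolution manipulations need $u\in L^2(\RR^d)$; this holds because on $P$ the densities satisfy $0\le\rho_i\le1$ pointwise and are supported in the bounded set $\ov\Om$, so $\rho_i^0,\rho_i^1\in L^1\cap L^\infty\subset L^2$. Beyond this there is no genuine obstacle: the entire content of the lemma is the (well-known) strict positive-definiteness of the heat kernel, and the role of the constraint set $P$ is merely to collapse the second-order part of $\HC_\e$ to the single quadratic form $-\int(K_\e\star u)\,u$ in the one variable $u=\rho_1^1-\rho_1^0$.
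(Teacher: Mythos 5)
Your proof is correct and uses essentially the same key ingredient as the paper's: after reducing to the quadratic part of $\HC_\e$ along the constraint $\rho_1+\rho_2=1$ a.e., both arguments come down to the strict positive-definiteness of the Gaussian kernel, which the paper also establishes by observing $\widehat G(\xi\sqrt\e)>0$. Your presentation is slightly more explicit (and arguably cleaner) in that you parametrize the segment, identify the leading coefficient as $-\int(K_\e\star u)\,u$, and note the $L^2$ point; the semigroup identity $G_\e=G_{\e/2}\star G_{\e/2}$ is a nice alternative to Plancherel but proves the same positivity fact, so this is a presentational variant rather than a genuinely different route.
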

\begin{proof}

For any pair $(\rho_1, \rho_2)\in P$ we may write $\rho_2(x)=1-\rho_1(x)$.  Thus, extending the densities by $0$ outside of $\Om$, we have
\small
$$\hc_{\e}(\rho_1,\rho_2)=\sigma\sqrt{\frac{2\pi}{\e}}\int_{\RR^d} (G_{\e}\star\rho_1)(x)(1-\rho_1(x))\dd x=\sigma\sqrt{\frac{2\pi}{\e}}\int_{\RR^d} (G_{\e}\star\rho_1)(x)\dd x-\sigma\sqrt{\frac{2\pi}{\e}}\int_{\RR^d} (G_{\e}\star\rho_1)(x)\rho_1(x)\dd x$$
\normalsize
Ignoring the constant multiples, both terms can be expressed conveniently in the Fourier domain:
$$ \hat{\rho}_1(0)-\int_{\RR^d} \hat{G}(\xi\sqrt{\e})|\hat{\rho}_1(\xi)|^2\dd\xi.$$
Now the strict concavity follows immediately as $\hat{G}(\xi\sqrt{\e})>0$ for all $\xi\in\RR^d$. 
\end{proof}

\subsection{The minimizing movements scheme}

Now we are ready to discuss the minimizing movements scheme.  Our first result confirms the existence of minimizers, and shows that any minimizing configuration $\bm{\rho}=(\rho_1,\rho_2)$ is a completely unmixed partition of the domain.  As we will see, the phases stay unmixed thanks to the concavity of the heat content. 

\begin{proposition}\label{prop:characteristic}
Suppose that $\rho_1^n, \rho_2^n\in \sP(\Om)$ and let $\t>0$ and $b_1,b_2>0$. 
Then the set of minimizers of the problem
\begin{equation}\label{eq:mm_unique}
\inf \left\{ \HC_{\epsilon}(\bm{\rho})+\Phi(\bm{\rho})+\frac{b_1}{2\tau}W_2^2(\rho_1, \rho_1^n)+\frac{b_2}{2\tau}W_2^2(\rho_2, \rho_2^n):\ \rho_1,\rho_2\in\sP^{\ac}(\Om), \rho_1+\rho_2=1\ \ae\right\}
\end{equation}
is non-empty and any solution $(\rho_1^*,\rho_2^*) \in\sP(\Om)\times\sP(\Om)$ is the characteristic function of a partition of $\Omega$.  
\end{proposition}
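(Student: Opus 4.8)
I would split the statement into two tasks: producing a minimizer by the direct method, and then showing that any minimizer is bang-bang by a perturbation argument built on the strict concavity of the heat content along segments in $P$ (Lemma~\ref{lem:conc}).

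\emph{Existence.} Set $C:=\{(\rho_1,\rho_2)\in\sP^{\ac}(\Om)\times\sP^{\ac}(\Om):\rho_1+\rho_2=1\ \ae\}$. Every element of $C$ has both densities bounded by $1$ and both measures supported in $\ov\Om$, so $C$ is weak-$\star$ precompact; it is also weak-$\star$ closed, since the identity $\rho_1+\rho_2=1$ and the bounds $\rho_i\le1$ pass to weak-$\star$ limits, hence $C$ is weak-$\star$ compact. The minimized functional is bounded below and weak-$\star$ lower semicontinuous: $\HC_\e$ is weak-$\star$ continuous and bounded below (Lemma~\ref{lem:lambda-conv}(1)), $\Phi$ is weak-$\star$ continuous because $\Phi_1,\Phi_2$ are bounded continuous on $\ov\Om$, and $\rho_i\mapsto W_2^2(\rho_i,\rho_i^n)$ is nonnegative and weak-$\star$ lsc. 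So a minimizer exists.

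\emph{Reduction to a scalar problem.} Let $(\rho_1^*,\rho_2^*)$ be a minimizer. Since $\rho_2^*=1-\rho_1^*$, I would view it as a minimizer, over $\{\rho_1\in\sP^{\ac}(\Om):0\le\rho_1\le1\}$, of
\[
F(\rho_1):=\HC_\e(\rho_1,1-\rho_1)+\Phi(\rho_1,1-\rho_1)+\tfrac{b_1}{2\t}W_2^2(\rho_1,\rho_1^n)+\tfrac{b_2}{2\t}W_2^2(1-\rho_1,\rho_2^n).
\]
By the Fourier identity in the proof of Lemma~\ref{lem:conc}, $\HC_\e(\rho_1,1-\rho_1)=\s\sqrt{2\pi/\e}\,(1-Q(\rho_1))$ where $Q(\rho_1):=\int(G_\e\star\rho_1)\,\rho_1\,dx$ is a positive-definite quadratic form; hence along every segment inside $P$ the heat content is a strictly concave quadratic. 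Suppose, for contradiction, that the mushy set $\{0<\rho_1^*<1\}$ has positive Lebesgue measure, so that $|E|>0$ for $E:=\{\d_0\le\rho_1^*\le1-\d_0\}$ and some $\d_0\in(0,\tfrac12)$. For $\phi\in L^\infty(\Om)$ with $\spt\phi\subseteq E$, $\int\phi=0$ and $\|\phi\|_\infty\le1$, and for $|s|\le\d_0$, the pair $(\rho_1^*+s\phi,\,1-\rho_1^*-s\phi)$ lies in $C$, and
\[
F(\rho_1^*+s\phi)=F(\rho_1^*)+As-\s\sqrt{2\pi/\e}\,Q(\phi)\,s^2+\tfrac{b_1}{2\t}r_1(s)+\tfrac{b_2}{2\t}r_2(s),
\]
where $A\in\RR$ collects the first-order contributions, $Q(\phi)>0$ because $\phi\not\equiv0$, and $r_i(s)\ge0$ are the convex remainders (vanishing to first order at $s=0$) produced by the convexity of $\rho\mapsto W_2^2(\rho,\rho_i^n)$ along linear interpolations. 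Minimality at $s=0$ forces $A=0$, leaving $\s\sqrt{2\pi/\e}\,Q(\phi)\,s^2\le\tfrac{b_1}{2\t}r_1(s)+\tfrac{b_2}{2\t}r_2(s)$ for small $|s|$.

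\emph{The hard part.} Contradicting this last inequality is, I expect, the crux. The squared Wasserstein distances are convex --- not concave --- along linear interpolations, so the $r_i$ carry the unhelpful sign, and for a generic $\phi$ they are only $o(s)$, not $\O{s^2}$; the strictly concave quadratic term coming from $\HC_\e$ does not automatically win. I would resolve this by localizing and exploiting that, by Lemma~\ref{lem:lambda-conv}(2), the concavity of $\HC_\e$ is quantitatively strong (of order $\e^{-(d+3)/2}$): take $\phi=\chi_{V_+}-\chi_{V_-}$ with $V_\pm$ a pair of small balls packed inside a Lebesgue density point of $E$. Then $Q(\phi)$ is bounded below explicitly through the Gaussian kernel, while --- crucially using that $\rho_1^*$ and $1-\rho_1^*$ are absolutely continuous, so that they admit Brenier plans $\pi_i=(\mathrm{id},T_i)_\#\rho_i^*$ --- one bounds $r_i(s)$ from above by the cost of re-routing these plans only over the two balls, which is $\O{s}$ with a constant controlled by the radius and separation of $V_\pm$. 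Optimizing the ball radius against $\e$ should make the left side strictly dominate, contradicting minimality. This forces $\{0<\rho_1^*<1\}$ to be Lebesgue-null, so $\rho_1^*=\chi_{A}$ for some measurable $A$ and $\rho_2^*=\chi_{\Om\setminus A}$, i.e.\ $(\rho_1^*,\rho_2^*)$ is a characteristic partition of $\Om$; the same argument applies verbatim to any minimizer.
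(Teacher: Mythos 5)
Your existence argument matches the paper's in spirit and is fine. The bang–bang part, however, takes a genuinely different route, and that route has a gap which you yourself flag as ``the hard part'' — and which I do not think can be closed in the form you describe.

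Working directly with density perturbations $\rho_1^*+s\phi$, you are fighting a quantitative competition: the heat content contributes a strictly concave term $-\s\sqrt{2\pi/\e}\,Q(\phi)s^2$, but each $\rho\mapsto W_2^2(\rho,\rho_i^n)$ is \emph{convex} along linear interpolations, so the remainders $r_i(s)$ enter with the wrong sign and carry a prefactor $b_i/(2\tau)$. When $\rho_i^n$ is nice (bounded above and below, convex support), $W_2^2(\cdot,\rho_i^n)$ is twice differentiable and $r_i(s)$ is genuinely $\Theta(s^2)$ with a Hessian constant of order one; the Wasserstein side of your inequality is then of size $\tau^{-1}s^2$ while the heat-content side is of size $\e^{-1/2}Q(\phi)s^2$. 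For $\tau$ small at fixed $\e$ this is not a fight you can win by choosing $\phi$ cleverly: $Q(\phi)\leq\HC_\e$-type bounds cap the concave gain, and nothing you do with the ball radii removes the $1/\tau$. (Your remark that $r_i(s)=O(s)$ via rerouting is not the right scale either: since $A=0$ you already have $r_i(s)=o(s)$, and an $O(s)$ bound is weaker, not stronger.) So as written your approach would at best prove the claim for $\tau$ large relative to $\e$, whereas the proposition is asserted for all $\tau,\e>0$.

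The paper sidesteps the competition entirely by lifting to transport plans: one rewrites the minimization over pairs $(\pi_1,\pi_2)\in\sP(\Om\times\Om)^2$ with fixed first marginals $\rho_i^n$ and second marginals summing to one. The crucial payoff is that $W_2^2(\rho_i,\rho_i^n)$ becomes $\inf_\pi\int|x-y|^2\dd\pi$, i.e.\ a \emph{linear} functional of $\pi_i$ once you optimize jointly, and the potential term is likewise linear. Only the heat content is nonlinear, and Lemma~\ref{lem:conc} makes it strictly concave along feasible segments whose second marginal is nontrivial. The whole objective $\cS(\pi_1,\pi_2)$ is then concave in the plan variables, with no convex competitor, and the bang–bang conclusion follows from the elementary fact that a strictly concave functional cannot have an interior minimizer: one builds, from any mushy set of positive measure, a feasible direction $\bm\theta$ with $\pm\bm\theta$ both admissible and with nontrivial second marginal, and strict concavity yields the contradiction. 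No localization, no Brenier-map estimates, no dependence on the size of $\tau$ versus $\e$. I would encourage you to redo the second half of your argument in the plan variables; the density-level route, as sketched, does not go through.
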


\begin{proof}
The existence of a solution of the optimization problem is an easy consequence of the weak lower semicontinuity of the objective functional and the weak-$\star$ compactness of $\sP(\Om)\times\sP(\Om)$. Let us remark that the constraint $\rho_1+\rho_2=1$ a.e. is closed under weak convergence, since $\int_{\Om}\rho_1\dd x+\int_{\Om}\rho_2\dd x=\sL^d(\Om).$

To show that an arbitrary solution $(\rho_1^*,\rho_2^*)$ is the characteristic functions of a partition of $\Om$, let us rewrite equivalently the minimization problem in terms of transport plans $\pi_i\in\sP(\Om\times\Om)$. 
Recall that $\pi_i$ is a plan between $\rho_i^n$ and $\rho_i$, whenever 
$$\int_{\Om\times\Om}\vphi(x)\dd\pi_i(x,y)=\int_{\Om}\vphi(x)\dd\rho_i^n(x)\ \ {\rm{and}}\ \ \int_{\Om\times\Om}\psi(y)\dd\pi_i(x,y)=\int_{\Om}\psi(y)\dd\rho_i(y),$$
for any $\vphi,\psi\in C(\Om).$ Since we are always working with measures $\rho_i^n,\rho_i$ that are absolutely continuous w.r.t. $\sL^d\mres\Om$, in the new minimization problem below, as we will see, we can restrict our search to plans that have absolutely continuous marginals w.r.t. $\sL^d\mres\Om.$ 
For a measure $\theta\in\sP(\Om\times\Om)$, we use the notation $\theta^1:=(P^x)_\#\theta$ and $\theta^2:=(P^y)_\#\theta$ to denote its marginals (here $P^x,P^y:\Om\times\Om\to\Om$ stand for the canonical projections from $\Om\times\Om$ onto $\Om$).

Thus, we aim to solve 
\begin{equation}\label{prob:min}
\begin{array}{l}
\ds\inf\left\{\cF(\pi_1,\pi_2)+\cG(\pi_1,\pi_2)+\sum_{i=1}^2\int_{\Om\times\Om}\frac{b_i}{2\tau}|x-y|^2\dd\pi_i(x,y)\right\}=:\inf\cS(\pi_1,\pi_2)\\[5pt]
\ds{\rm{subject\ to\ }} \pi_i^1=\rho^n_i \ {\rm{and}}\ \sum_{i=1}^2 \pi_i^2=1\ \ae
\end{array}
\end{equation}
Here we denote
$$\cG(\pi_1,\pi_2):=\int_{\Om\times\Om}\sum_{i=1}^2\Phi_i(y)\dd\pi_i(x,y).$$
We define moreover
\begin{align*}
\cF(\pi_1,\pi_2)&:=\sigma\sqrt{\frac{2\pi}{\e}}\int_{\Om}\int_{\Om\times \RR^d}G(x_1-\sqrt{\e}y)\dd\pi_1(x,y)\dd x_1\\
&-\sigma\sqrt{\frac{2\pi}{\e}}\int_{\Om\times\Om}\int_{\Om\times\RR^d} G(y_1-\sqrt{\e}y_2)\dd\pi_1(x_2,y_2)\dd \pi_1(x_1,y_1)
\end{align*}
and
$$\cS(\pi_1,\pi_2):=\cF(\pi_1,\pi_2)+\cG(\pi_1,\pi_2)+\sum_{i=1}^2\int_{\Om\times\Om}\frac{b_i}{2\tau}|x-y|^2\dd\pi_i(x,y),$$
where we have extended the second marginals of $\pi_i$ by 0 outside of $\Om$.

The minimization is carried out over a weakly compact set and $\cS$ is weakly lower semicontinuous and bounded below, thus minimizers exist.  If $\bm{\pi}^*=(\pi_1,\pi_2)$ is a minimizer of \eqref{prob:min} then we can construct a minimizer $\bm{\rho}^*=(\rho_1^*,\rho_2^*)$ of the original problem by taking $\rho_i^*=(P^y)_\#\pi^*_i$. 

Now we consider the properties of minimizers.  Clearly, $\cF$ is G\^ateaux differentiable at $\bm{\pi}^*$ in the sense that there exists $\delta\cF(\bm{\pi}^*)\in C(\Om\times\Om)$ such that  
\begin{align}\label{def:Gateaux}
\langle\delta\cF(\bm{\pi}^*), \bm{\theta}\rangle&:=\sigma\sqrt{\frac{2\pi}{\e}}\int_{\Om}\int_{\Om\times \RR^d}G(x_1-\sqrt{\e}y)\dd \theta_1(x,y)\dd x_1\\
\nonumber&-\sigma\sqrt{\frac{2\pi}{\e}}\int_{\Om\times\Om}\int_{\Om\times\RR^d} G(y_1-\sqrt{\e}y_2)\dd\pi_1(x_2,y_2)\dd \theta_1(x_1,y_1)\\
\nonumber&-\sigma\sqrt{\frac{2\pi}{\e}}\int_{\Om\times\Om}\int_{\Om\times\RR^d} G(y_1-\sqrt{\e}y_2)\dd \theta_1(x_2,y_2)\dd \pi_1(x_1,y_1),
\end{align}
where $\bm{\pi}+t\bm{\theta}$ is any admissible perturbation of $\bm{\pi}.$ Similarly, as the other terms in the definition of $\cS$ are linear in $\pi$, these are in the same way differentiable, therefore $\cS$ is G\^ateaux differentiable in this sense.

From Lemma \ref{lem:conc} it follows that $\cS$ is concave along line segments $\bm{\pi}+t\bm{\theta}$ ($t\in(-1,1)$), where $\bm{\pi}$ is a feasible point and $\bm{\theta}=(\theta_1,\theta_2)$ is a feasible direction at $\bm{\pi}$ i.e.
\begin{equation}\label{eq:variation}
\theta_i^1(x)=0\ \ae\ x\in\Om,\ \ \int_{\Om\times\Om}\dd \theta_i(x,y)=0\ \ {\rm{and}}\ \ \sum_{i=1}^2 \theta_i^2(y)=0 \ \ae\ y\in\Om,
\end{equation}
and for some $\delta>0$
\begin{equation}\label{eq:valid_perturbation}
\pi_i+t\theta_i\geq 0
\end{equation}
in the sense of signed measures, for all $t\in [0,\delta)$.
Furthermore, it follows from Lemma \ref{lem:conc} that $\cS$ is strictly concave on line segments $\bm{\pi}+t\bm{\theta}$, if for some $i$ the marginal $\theta_i^2(y) $ is not $0$ for almost every $y$.
Therefore, if $\bm{\pi}^*=(\pi_1^*, \pi_2^*)$ is a minimizer and $\bm{\theta}$ is a feasible direction at $\bm{\pi^*}$ with at least one non-trivial marginal then 
$$\langle\delta\cS(\bm{\pi}^*), \bm{\theta}\rangle>0,$$   
where $\delta\cS(\bm{\pi}^*)$ stands for the first variation of $\cS$ at $\bm{\pi}^*$ defined as is \eqref{def:Gateaux}.

Let $\bm{\pi}$ be a feasible solution and let $\rho_i(y)= \pi_i^2(y)$.  Now suppose that there exists $0<\alpha<1$ such that the set $\Omega_{\alpha}=\{y\in \Om: \rho_1(y), \rho_2(y) \in (\alpha, 1-\alpha)\}$ has positive measure.  Partition $\Omega_{\alpha}$ into two sets $E_1, E_2$ of equal measure. Then there exist measure preserving maps $T:E_1\to E_2$ and $S:E_2\to E_1$ such that $S\circ T$ and $T\circ S$ are the identity almost everywhere on their respective domains (for example one may choose $T=\nabla \psi$ to be the optimal transport map between the densities $\bm{1}_{E_1}$ and $\bm{1}_{E_2}$ and $S=\nabla \psi^*$).  Now we construct a feasible direction $\bm{\theta}$ at $\bm{\pi}$ as follows.  Let $r(y)=\frac{\rho_1(T(y))}{\rho_2(y)}$ for $y\in E_1$ and we define the signed measures $\theta_1,\theta_2\in\sM(\Om\times\Om)$ as
$$\theta_1:=(\rho_1^n\otimes(\rho_1\circ T))\mres(\Om\times E_1)-(\rho_1^n\otimes\rho_1)\mres(\Om\times E_2),$$
i.e.
$$\int_{\Om\times\Om}\vphi(x,y)\dd \theta_1(x,y):=\int_{\Om\times E_1}\vphi(x,y)\dd\rho_1^n(x)\dd\rho_1(T(y))-\int_{\Om\times E_2}\vphi(x,y)\dd\rho_1^n(x)\dd\rho_1(y)$$
and
$$\theta_2:=-(\rho_2^n\otimes r\rho_2)\mres(\Om\times E_1)+(\rho_2^n\otimes (r\circ S) (\rho_2\circ S))\mres (\Om\times E_2),$$
i.e.
$$\int_{\Om\times\Om}\vphi(x,y)\dd \theta_2(x,y):=-\int_{\Om\times E_1}\vphi(x,y)r(y)\dd\rho_2^n(x)\dd\rho_2(y)+\int_{\Om\times E_2}\vphi(x,y)r(S(y))\dd\rho_2^n(x)\dd\rho_2(S(y)),$$
for any $\vphi\in C(\Om\times\Om).$

%
Since $\theta_1^2(y)=\rho_1(T(y))>\alpha$ a.e. on $E_1$ we see that $\bm{\theta}$ has a nontrivial marginal.  
 
Let us now check that $\bm{\theta}$ is feasible, i.e. it satisfies \eqref{eq:variation} and \eqref{eq:valid_perturbation}. If $\beta<\min\{1-\a,\alpha\}$ then
$\bm{\pi}\pm \beta\bm{\theta}$ defines a non-negative measure. Next, we check that $\bm{\theta}$ satisfies $\theta_i^1(x)=0$ for a.e.  $x\in\Om$ and $i=1,2$ and $\sum_{i=1}^2 \theta_i^2(y) =0$ for a.e. $y\in\Om$.   
For $\vphi\in C(\Om)$, we have
$$\int_\Om\vphi(x)\dd \theta^1_1(x)= \int_{\Om\times\Om}\vphi(x) \dd \theta_1(x,y)= \int_\Om\vphi(x)\dd\rho_1^n(x)\int_{E_1}\rho_1(T(y))\dd y - \int_\Om\vphi(x)\rho_1^n(x)\int_{E_2} \rho_1(y)\dd y=0 $$
and
\begin{align*}
\int_{\Om}\vphi(x) \dd \theta_2^1(x)&= \int_{\Om\times\Om}\vphi(x)\dd \theta_1(x,y)\\
&=-\int_{\Om}\vphi(x)\dd\rho_2^n(x)\int_{E_1} r(y)\rho_2(y)\dd y + \int_\Om\vphi(x)\dd\rho_2^n(x)\int_{E_2} r(S(y))\rho_2(S(y))\dd y=0
\end{align*}
where we have used that both $T$ and $S$ are measure preserving between $E_1$ and $E_2$ and vice-versa, respectively. Let us notice that these arguments also show (by taking $\vphi\equiv 1$) that 
$$\int_{\Om\times\Om} \dd \theta_i(x,y) =0,\ \ i=1,2.$$ 
Now, for $\vphi\in C(\Om),$ we have
\begin{align*}
\sum_{i=1}^2\int_{\Om}\vphi(y)\dd \theta_i^2(y)&=\sum_{i=1}^2\int_{\Om\times\Om}\vphi(y)\dd \theta_i(x,y)\\
&=\int_{\Om}\dd\rho_1^n(x)\int_{E_1}\vphi(y)\rho_1(T(y))\dd y-\int_{\Om}\dd \rho_1^n(x)\int_{E_2}\vphi(y)\dd\rho_1(y)\\
&-\int_\Om\dd\rho_2^n(x)\int_{E_1}\vphi(y)r(y)\dd\rho_2(y)+\int_\Om\dd\rho_2^n(x)\int_{E_2}\vphi(y)r(S(y))\rho_2(S(y))\dd y\\
&=\int_{E_1}\phi(y)[\rho_1(T(y))-r(y)\rho_2(y)]\dd y + \int_{E_2}\phi(y)[r(S(y))\rho_2(S(y))-\rho_1(y)]\dd y = 0.
\end{align*} 

Note that our arguments in fact show that $-\bm{\theta}$ is also a feasible direction at $\bm{\pi}$.  $\pm\bm{\theta}$ have nontrivial marginals, and it is not possible to have both $\langle\delta\cS(\bm{\pi}^*), \bm{\theta}\rangle>0$ and $-\langle\delta\cS(\bm{\pi}^*), \bm{\theta}\rangle>0$.  Therefore, $\bm{\pi}$ cannot be a minimizer.  This allows us to conclude that for any minimizer $\bm{\rho}^*$ of the original problem each density $\rho_i^*$ takes values $\{0,1\}$ almost everywhere. 

\end{proof}


\subsection{Optimality conditions and construction of the pressure variables}

In the next Lemma, we give a more complete characterization of the minimizers in terms of certain necessary inequalities.  In particular, this is the first place where we see the appearance of the pressure variable, which plays an essential role in all of the subsequent analysis.   Note that for convenience we express this result using the notation $K_\e:=\sigma\sqrt{\frac{2\pi}{\e}}G_{\e}.$

\begin{lemma}\label{lem:opt_cond}
Let $(\rho_1^*,\rho_2^*)$ be an optimizer in \eqref{eq:mm_unique} and let $(\rho_1,\rho_2)$ a pair of probability measures such that $\rho_1+\rho_2=1$ a.e. Then, 
\begin{itemize}
\item[(i)] we have the following optimality condition
\begin{equation}\label{ineq:optimality}
\int_{\Om}\left[K_\e\star\rho_2^*+\Phi_1+b_1\vphi_1/\t\right](\rho_1-\rho_1^*)\dd x+\int_{\Om}\left[K_\e\star\rho_1^*+\Phi_2+b_2\vphi_2/\t\right](\rho_2-\rho_2^*)\dd x\ge 0,
\end{equation}
for a suitable pair of Kantorovich potentials $(\vphi_1,\vphi_2)$ in the optimal transport of $\rho_1^*$ onto $\rho_1^n$ and $\rho_2^*$ onto $\rho_2^n$, respectively. 
\item[(ii)] there exists a function $p:\Om\to\R$ that is Lipschitz continuous on $\spt(\rho^*_i)$, $i=1,2$ and is such that for any probability densities $\rho_1, \rho_2$ with $\rho_1+\rho_2=1$ a.e. we have
\begin{equation}\label{ineq:pressure_optimality}
\int_{\Om}\left[K_\e\star\rho_2^*+\Phi_1+b_1\vphi_1/\t +p \right](\rho_1-\rho_1^*)\dd x+\int_{\Om}\left[K_\e\star\rho_1^*+\Phi_2+b_2\vphi_2/\t  +p \right](\rho_2-\rho_2^*)\dd x\ge 0,
\end{equation}
moreover, we have 
\begin{equation}\label{eq:pressure-gradient}
\begin{array}{l}
\nabla p=-\nabla K_\e\star\rho_2^*-\nabla\Phi_1-b_1\nabla\vphi_1/\t\ \ {\rm a.e.\ in \ spt}(\rho_1^*)\\[5pt]
 {\rm{and}}\\[5pt]
\nabla p=-\nabla K_\e\star\rho_1^*-\nabla\Phi_2-b_2\nabla\vphi_2/\t\ \ {\rm a.e.\ in \ spt}(\rho_2^*).
\end{array}
\end{equation}
\end{itemize}
\end{lemma}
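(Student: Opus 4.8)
The plan is to derive \eqref{ineq:optimality} first, and then introduce the pressure $p$ as a Lagrange multiplier for the incompressibility constraint to obtain (ii). For part (i), I would take any competitor pair $(\rho_1,\rho_2)$ with $\rho_1+\rho_2=1$ a.e. and form the convex combination $\rho_i^s = (1-s)\rho_i^* + s\rho_i$ for $s\in[0,1]$, which is admissible in \eqref{eq:mm_unique} since the constraint is affine. The map $s\mapsto J(s)$, where $J$ is the objective functional in \eqref{eq:mm_unique} evaluated at $(\rho_1^s,\rho_2^s)$, is minimized at $s=0$, so its right derivative at $0$ is nonnegative. The heat content term $\HC_\e$ is quadratic in $\bm\rho$, so its derivative at $s=0$ in the direction $(\rho_1-\rho_1^*,\rho_2-\rho_2^*)$ produces exactly $\int_\Om (K_\e\star\rho_2^*)(\rho_1-\rho_1^*) + \int_\Om (K_\e\star\rho_1^*)(\rho_2-\rho_2^*)$ after symmetrizing the bilinear form (using that $K_\e$ is even); the potential term $\Phi$ is linear and contributes $\int_\Om \Phi_i(\rho_i-\rho_i^*)$; and for the Wasserstein terms I would use the standard fact (convexity of $s\mapsto \tfrac12 W_2^2(\rho_i^s,\rho_i^n)$ along linear interpolations, and the subdifferential formula from optimal transport) that $\frac{d}{ds}\big|_{s=0^+}\tfrac12 W_2^2(\rho_i^s,\rho_i^n) \le \int_\Om \varphi_i (\rho_i - \rho_i^*)$ for a Kantorovich potential $\varphi_i$ for the transport of $\rho_i^*$ onto $\rho_i^n$. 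Combining these and multiplying by the mobilities gives \eqref{ineq:optimality}.

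For part (ii), the idea is to remove the constraint $\rho_1+\rho_2=1$. Define the linear functional appearing in \eqref{ineq:optimality}: set $u_1 := K_\e\star\rho_2^*+\Phi_1+b_1\varphi_1/\t$ and $u_2 := K_\e\star\rho_1^*+\Phi_2+b_2\varphi_2/\t$. Inequality \eqref{ineq:optimality} says $\int_\Om u_1(\rho_1-\rho_1^*) + \int_\Om u_2(\rho_2-\rho_2^*)\ge 0$ for all admissible $(\rho_1,\rho_2)$. Writing $\rho_2-\rho_2^* = -(\rho_1-\rho_1^*)$ (both competitor and optimizer satisfy the unit-sum constraint), this reduces to $\int_\Om (u_1-u_2)(\rho_1-\rho_1^*)\ge 0$ for all probability densities $\rho_1$ with $0\le\rho_1\le 1$; one should be slightly careful that the admissible $\rho_1$ range over all densities with $0\le\rho_1\le1$ and total mass $1$, which is what the constraints $\rho_1,\rho_2\in\sP^{\ac}(\Om)$, $\rho_1+\rho_2=1$ give. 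A standard bathtub/rearrangement argument then shows there is a constant $c$ and a set relation: on $\spt(\rho_1^*)$ one has $u_1-u_2\le c$ and on $\spt(\rho_2^*)$ one has $u_1-u_2\ge c$, with equality $u_1-u_2=c$ a.e. on the ``interface'' where $0<\rho_1^*<1$ — but by Proposition \ref{prop:characteristic} the optimizer is a characteristic function, so $\spt(\rho_1^*)$ and $\spt(\rho_2^*)$ essentially partition $\Om$. I would then \emph{define} $p := c - u_2$ on $\spt(\rho_2^*)$ and $p := u_1 - c$ on $\spt(\rho_1^*)$; consistency on the overlap of the supports follows from the equality of $u_1-u_2$ with $c$ there (using the orthogonality/meeting structure of the interface). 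With this $p$, adding $\int_\Om p(\rho_1-\rho_1^*) + \int_\Om p(\rho_2-\rho_2^*) = \int_\Om p\,((\rho_1-\rho_1^*)+(\rho_2-\rho_2^*)) = 0$ to \eqref{ineq:optimality} — again using that both $(\rho_1,\rho_2)$ and $(\rho_1^*,\rho_2^*)$ sum to one — gives \eqref{ineq:pressure_optimality} provided the terms in square brackets are nonnegative on the relevant supports, which is arranged by the choice of $p$.

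For the gradient identity \eqref{eq:pressure-gradient}: on $\spt(\rho_1^*)$, where $\rho_1^*>0$, the optimality inequality \eqref{ineq:pressure_optimality} localizes — perturbing $\rho_1^*$ by small-mass variations supported in the interior of $\spt(\rho_1^*)$ (moving a bit of mass from one point to a nearby point, which is an admissible two-sided perturbation there) forces the bracket $K_\e\star\rho_2^*+\Phi_1+b_1\varphi_1/\t+p$ to be constant a.e. on $\spt(\rho_1^*)$, hence its gradient vanishes, giving $\nabla p = -\nabla K_\e\star\rho_2^* - \nabla\Phi_1 - b_1\nabla\varphi_1/\t$ a.e. there; the regularity needed ($K_\e\star\rho_2^*$ and $\Phi_1$ are Lipschitz, and $\varphi_1$ is locally Lipschitz on $\spt(\rho_1^*)$ by standard optimal-transport regularity when $\rho_1^*$ has a density bounded below on its support) makes this rigorous and shows $p$ is Lipschitz on each support. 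The symmetric argument on $\spt(\rho_2^*)$ gives the second identity.

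The main obstacle I anticipate is the passage from the variational inequality \eqref{ineq:optimality} to the existence of the single scalar field $p$ in \eqref{ineq:pressure_optimality} — in particular, establishing that $p$ is well-defined and (locally) Lipschitz \emph{across} the free boundary $\Gamma$, not just on each phase separately, and controlling the Kantorovich potentials $\varphi_i$ (which a priori are only defined up to additive constants on each connected component and need not be globally Lipschitz). One typically handles this by fixing the additive constants of $\varphi_i$ appropriately, invoking $c$-cyclical monotonicity and the fact that $\rho_i^*$, being a characteristic function of a set, has its support equal to the closure of that set, and then patching $p$ together using the continuity of $K_\e\star\rho_j^*$ and $\Phi_i$ up to the interface. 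The concavity of $\HC_\e$ along the constraint set (Lemma \ref{lem:conc}), already exploited in Proposition \ref{prop:characteristic}, is what guarantees the optimizer is a genuine partition and thus that the ``interface region'' $\{0<\rho_1^*<1\}$ is null, which is what makes the two-piece definition of $p$ coherent.
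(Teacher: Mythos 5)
Your approach to part (i) is exactly the paper's: take affine competitors $\rho_i^s=(1-s)\rho_i^*+s\rho_i$, differentiate the heat-content and potential terms, and use the Kantorovich-potential subdifferential bound for $\tfrac12 W_2^2$ (the paper defers to \cite[Lemma 3.1]{MauRouSan1} for the Wasserstein piece). For part (ii) the reduction to $\int_{\Om}(u_1-u_2)(\rho_1-\rho_1^*)\dd x\ge 0$ and the bathtub step that produces a separating constant $c$ — using Proposition~\ref{prop:characteristic} so that admissible variations are one-signed on each support — is also the paper's argument. So the skeleton is right.

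But the second half has a genuine sign error that propagates. You define $p:=u_1-c$ on $\spt(\rho_1^*)$, yet the bracket in \eqref{ineq:pressure_optimality} is $u_1+p$, which your choice turns into $2u_1-c$, not a constant. The correct piecewise definition (matching the paper's $p:=C_1-u_1$ on $\spt\rho_1^*$, $p:=C_2-u_2$ on $\spt\rho_2^*$, with $C_1-C_2=c$) has $p=-u_i+\mathrm{const}$ on $\spt\rho_i^*$, so that $u_i+p$ is constant there. Your sign also breaks the consistency check you invoke: with $p=u_1-c$ on one side and $p=c-u_2$ on the other, matching at the interface would require $u_1+u_2=2c$, not $u_1-u_2=c$. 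With the corrected sign the matching condition is precisely $u_1-u_2=c$, as you intended.

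Two further points. First, \eqref{eq:pressure-gradient} is not something to be re-derived by a localized two-sided perturbation — it follows instantly by taking gradients of the defining identity $p=\mathrm{const}-u_i$ on $\spt\rho_i^*$ once the Lipschitz regularity of $K_\e\star\rho_j^*$, $\Phi_i$ and the Kantorovich potentials is noted. Moreover the two-sided perturbations you describe in the interior of $\spt\rho_1^*$ are not available: $\rho_1^*=\chi_A$ saturates the constraint $\rho_1\le 1$ there, so admissible variations can only decrease $\rho_1$ on $A$; the one-sidedness is exactly what makes this a bathtub inequality and not an equality. Second, the anticipated obstacle of making $p$ well-defined and Lipschitz \emph{across} $\Gamma$ is not an obstacle at all: the lemma only asserts Lipschitz continuity of $p$ on each $\spt\rho_i^*$ separately, consistent with the fact that the Muskat pressure physically jumps across the interface by a curvature term. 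No global patching needs to hold, and the Kantorovich potentials for the quadratic cost on a bounded domain are globally Lipschitz (they are $c$-concave with $c(x,y)=\tfrac12|x-y|^2$), so the concern about per-component additive constants and lack of global Lipschitz bounds does not arise.
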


\begin{proof}
Let $(\rho_1,\rho_2)$ be a pair of probability measures such that $\rho_1+\rho_2=1$ a.e. For $\d\in[0,1]$ let us consider the competitors $(\rho_1^\d,\rho_2^\d):=(\rho_1^*+\d(\rho_1-\rho_1^*),\rho_2^*+\d(\rho_2-\rho_2^*))$, which by construction satisfy the constraint. 

By optimality, we have 
\begin{align*}
&\HC_\e(\rho_1^\d,\rho_2^\d)-\HC_\e(\rho_1^*,\rho_2^*)+\Phi(\rho_1^\d,\rho_2^\d)-\Phi(\rho_1^*,\rho_2^*)\\
&+\frac{1}{2\t}\left(W_2^2(\rho_1^\d,\rho_1^n)-W_2^2(\rho_1^*,\rho_1^n)+W_2^2(\rho_2^\d,\rho_2^n)-W_2^2(\rho_2^*,\rho_2^n)\right)\ge 0.
\end{align*}

Using the exact same argument as in \cite[Lemma 3.1]{MauRouSan1} to develop the Wasserstein part on the one hand and the first variations of $\HC_\e$ and $\Phi$ on the other hand, we find \eqref{ineq:optimality}.

For (ii) (similarly as in \cite[Proposition 4.7]{Lab17}), let us notice first that \eqref{ineq:optimality} can be written in the form  
$$\int_{\Om}\left[K_\e\star\rho_2^*+\Phi_1+b_1\vphi_1/\t\right]h_1\dd x+\int_{\Om}\left[K_\e\star\rho_1^*+\Phi_2+b_2\vphi_2/\t\right]h_2\dd x\ge 0,$$
where $(h_1,h_2)\in L^\infty(\Om)\times L^\infty(\Om)$ is such that $\rho_1^*+\d h_1+\rho_2^*+\d h_2=1$ and $\rho_i^*+\d h_i\in [0,1]$ for $i=1, 2$.  We know from Proposition \ref{prop:characteristic} that $\rho_1^*, \rho_2^*$ forms a partition of $\Omega$. Therefore, we must take $h_1\leq 0$ on $\spt(\rho_1^*)$, and $h_1\geq 0$ on $\spt(\rho_2^*)$ (and vice-versa for $h_2$).   To preserve the constraint  $\rho_1^*+\d h_1+\rho_2^*+\d h_2=1$ and the mass of each density, we must also take  $h_1+h_2=0$ a.e. and $\int_{\Om}h_1\dd x=\int_{\Om}h_2\dd x=0$.

Now if we set $h_2=-h_1$, we find that 
$$\int_{\Om}\left[K_\e\star\rho_2^*+\Phi_1+b_1\vphi_1/\t-\Big(K_\e\star\rho_1^*+\Phi_2+b_2\vphi_2/\t \Big) \right]h_1\dd x \geq 0,$$
for any $h_1\in L^\infty(\Om)$ with 0 mean such that $h_1\leq 0$ a.e. on $\spt(\rho_1)$. This implies that there exist constants $C_1, C_2\in\R$ such that 
$$K_\e\star\rho_2^*+\Phi_1+b_1\vphi_1/\t-C_1 \leq K_\e\star\rho_1^*+\Phi_2+b_2\vphi_2/\t -C_2\ \ {\rm a.e.}\;  \spt(\rho_1)$$
$$K_\e\star\rho_1^*+\Phi_2+b_2\vphi_2/\t-C_2\leq K_\e\star\rho_2^*+\Phi_1+b_1\vphi_1/\t-C_1 \ \ {\rm a.e.}\; \spt(\rho_2)$$
From here, we can define the pressure variable as
\begin{equation}
p:=C_1-K_\e\star\rho_2^*-\Phi_1-b_1\vphi_1/\t  \quad \textrm{a.e. on}\ \spt(\rho_1^*)\ \ {\rm{and}}\ \ p:=C_2-K_\e\star\rho_1^*-\Phi_2-b_2\vphi_2/\t  \quad \textrm{a.e. on}\ \spt(\rho_2^*).
\end{equation}

Since the Kantorovich potentials are Lipschitz continuous and the other terms are smooth, we find that $p$ is Lipschitz continuous (note this regularity may degenerate as $\t\da 0$). 
By construction, $p$ clearly satisfies the inequality in \eqref{ineq:pressure_optimality}, and in particular the value of the l.h.s. is equal to zero. 
Since the functions under consideration are all Lipschitz continuous, we obtain \eqref{eq:pressure-gradient}.
\end{proof}

\subsection{Continuous in time solutions for $\ve>0$ fixed}

Now we are ready to begin constructing time interpolations from the discrete scheme.  In this section we restrict ourselves to the case where $\epsilon$ is held fixed.  In this special case, we can use standard arguments from the theory of Wasserstein gradient flows to obtain continuous in time equations in the limit $\tau \da 0$.  When $\epsilon$ is held fixed, we have strong compactness on the pressure term. However, similarly to the models in \cite{KimMes,Lab17}, we will lack strong compactness on the density variables, which would mean that in the limit when $\t\da 0$, only a weaker version of the system will be available (see \eqref{eq:eps-fixed}).  Let us also note that later on in Section \ref{sec:muskat}, we will need some of the pressure estimates provided below when we take $\epsilon$ to zero along with $\tau$. 

Let $T>0$ be a given time horizon and $N\in\N$ and $\t>0$ such that $N\t=T.$ Let $\e>0$ be fixed. By now, it is standard how to construct weak solutions to PDEs that have gradient flow structure, using  minimizing movement schemes as
\begin{equation} \label{eq:mm}
\begin{array}{l}
\ds\left({\rho}^{n+1, \tau}_1,{\rho}^{n+1, \tau}_2\right)\\
\ds\in\argmin \left\{ \HC_{\epsilon}(\rho_1,\rho_2)+\Phi(\rho_1,\rho_2)+\frac{b_1}{2\tau}W_2^2(\rho_1, \rho_1^n)+\frac{b_2}{2\tau}W_2^2(\rho_2, \rho_2^n):\ \rho_1,\rho_2\in\sP^{\ac}(\Om), \rho_1+\rho_2=1\ 
\ae\right\}.
\end{array}
\end{equation}
Let $\rho_i^\t:[0,T]\to\cP(\Om)$ be defined as
\begin{equation}\label{def:rho-tau}
\rho_i^\t(t):=\rho_i^{n+1},\ {{\rm if}}\ t\in[n\t,(n+1)\t).
\end{equation}
We define the corresponding velocities, pressures and momentum variables as 
\begin{equation}\label{def:vpE-tau}
\begin{array}{l}
v_i^\t(t,\cdot):=\frac{\nabla\vphi_i^{n+1}}{\t},\ {{\rm if}}\ t\in[n\t,(n+1)\t),\\[5pt]
p_i^\t(t,\cdot):=p_i^{n+1},\ {{\rm if}}\ t\in[n\t,(n+1)\t),\\[5pt]
E_i^\t(t,\cdot):=v_i^\t(t,\cdot)\rho_i^\t,\ {{\rm if}}\ t\in[n\t,(n+1)\t),
\end{array}
\end{equation}
where $\vphi_i^{n+1}$ and $p_i^{n+1}$ are defined in Lemma \ref{lem:opt_cond}. Following the same steps as in \cite{KimMes,MauRouSan1},  the analysis boils down to obtain a sufficient amount of uniform (in $\tau$) estimates and compactness for the previously obtained functions, then pass to the limit $\t\da 0.$ 

Also, as additional tools we construct the corresponding continuous in time (geodesic) interpolations between the densities and the corresponding velocities and momentum variables, $(\tilde\rho_i^\t,\tilde v_i^\t,\tilde E_i^\t)$. We refer to \cite[Section 3.1]{KimMes} (see also \cite{MauRouSan1,OTAM}) for the precise construction. In particular, as a consequence of this construction, we have 
$$\partial_t\tilde\rho_i^\t+\nabla\cdot \tilde E_i^\t=0,$$
in the sense of distribution on $(0,T)\times\Om$. Let us comment on the role of the geodesic interpolations. These interpolations (beside the more standard piecewise constant ones) in the context of $W_2$--gradient flows were first used by Santambrogio (see the discussion in \cite[Section 8.3]{OTAM}). Their role is the following: for any $\t$, these interpolations, since pieces of geodesic curves, by definition solve continuity equations. Since the piecewise constant interpolations match the geodesic ones at node points $n\tau$, if both of them converge, they need to converge to the same limit. Therefore, one automatically has a continuity equation as the limit of piecewise constant interpolations. An alternative way (which is more often used in the literature) would be to say that the piecewise constant interpolations solve a continuity equation up to an error term, then one would need to show that this error term is converging to zero as the time discretization parameter tends to zero.

Based on the same techniques as in \cite{MauRouSan1,KimMes}, it is easy to obtain the following estimates.

\begin{lemma}\label{lem:estimates} 
Let $(\rho_i^\t, v_i^\t,E_i^\t)$ and $(\tilde\rho_i^\t,\tilde v_i^\t,\tilde E_i^\t)$ be the previously constructed piecewise constant and continuous in time interpolations, respectively. Then there exists $C>0$ independent of $\t>0$ and depending only on $\HC_\e(\rho_{1,0},\rho_{2,0})$ such that 
\begin{itemize}
\item[(i)]$W_2(\rho_{i}^\t(t),\rho_{i}^\t(s))\le C\sqrt{t-s+\t}$ and $W_2(\tilde\rho_{i}^\t(t),\tilde\rho_{i}^\t(s))\le C\sqrt{t-s}$  for any $0\le s\le t\le T.$ Moreover, up to passing to subsequences $(\rho_i^\t)_{\t>0}$ and $(\tilde\rho_i^\t)_{\t>0}$ converge (uniformly with respect to $W_2$) as $\t\da 0$ to the same limit.
\item[(ii)] $(v_i^\t)_{\t>0}$ is uniformly bounded in $L^2([0,T]; L^2_{\rho_i^\t}).$
\item[(iii)] $(E_i^\t)_{\t>0}$ and $(\tilde E_i^\t)_{\t>0}$ are uniformly bounded in $\sM^d([0,T]\times\Om)$ and up to passing to subsequences they have the same distributional limits as $\t\da 0$.
\item[(iv)] $\ds\int_0^T\int_{\R^d}|\nabla G_\e\star\rho_i^\t|\dd x\dd t\le CT\,\HC_{\e}(\rho_{1,0},\rho_{2,0}).$
\item[(v)] $\ds\int_0^T\int_{\Om}|\rho_i^\t(t,x+\d d)-\rho_i^\t(t,x)|\dd x\dd t\le CT(\d+\sqrt{\e})\HC_{\e}(\rho_{1,0},\rho_{2,0})$ for any $d\in\R^d$ with $|d|=1$ and any $\d>0$.
\item[(vi)] $\ds \int_0^T\int_{\Om}|\nabla p^\t|^2\dd x\dd t\le C\int_0^T\int_{\Om}\frac{1}{\e^2}\left(\left(G_{2\e}\star\rho_2^\t\right)^2\rho_1^\t+\left(G_{2\e}\star\rho_1^\t\right)^2\rho_1^\t\right)\dd x\dd t + C$, and as a consequence,
$(\nabla p^\t)_{\t>0}$ is uniformly bounded in $L^2([0,T]\times\Om;\R^d)$ and $\left(p^\t-\fint_{\Om}p^\t(\cdot,x)\dd x\right)_{\t>0}$ is uniformly bounded in $L^2([0,T]\times\Om)$ by a constant of the form $TC(1+1/\e^2).$
\item[(vii)] $(\nabla p^\t)_{\t>0}$ is uniformly bounded in $L^2([0,T]; (C^1(\Om))^*),$ independently of $\t$ and $\e$. In particular, $\nabla p^\t$ is uniformly bounded in $\sD'((0,T)\times\Om;\R^d)$ and  $\nabla p^\t(t,\cdot)$ defines a uniformly bounded distribution of order one, for a.e. $t\in[0,T]$.
\end{itemize}
\end{lemma}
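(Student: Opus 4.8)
\emph{Overall strategy and the classical estimates (i)--(iii).} Everything is driven by the telescoped energy--dissipation inequality of the scheme \eqref{eq:mm}: testing minimality of $(\rho_1^{n+1},\rho_2^{n+1})$ against $(\rho_1^n,\rho_2^n)$, summing in $n$, and using $\HC_\e\ge 0$, $\cE_p\equiv 0$ on the constraint set and $|\Phi|\le\|\Phi_1\|_\infty+\|\Phi_2\|_\infty=:M$, one obtains
\[\sum_{n}\sum_{i=1}^2\frac{b_i}{2\t}W_2^2(\rho_i^{n+1},\rho_i^n)\le \HC_\e(\rho_{1,0},\rho_{2,0})+2M=:C_0,\]
together with $\HC_\e(\rho_1^{n+1},\rho_2^{n+1})\le C_0$ for every $n$ (the energy $\cE_\e$ is non-increasing along the scheme, up to the bounded potential). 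From this, (i) follows from the triangle inequality and Cauchy--Schwarz for $\rho_i^\t$ (at most $\lceil(t-s)/\t\rceil+1$ jumps), the $\tfrac12$--Hölder continuity of the geodesic interpolants $\tilde\rho_i^\t$, and the refined Arzelà--Ascoli theorem on the compact metric space $(\sP(\Om),W_2)$; the two interpolants coincide at the nodes $n\t$, so they converge to the same limit. For (ii), $\|v_i^\t\|^2_{L^2([0,T];L^2_{\rho_i^\t})}=\sum_n\tfrac1\t W_2^2(\rho_i^{n+1},\rho_i^n)\le 2C_0/b_i$, via the identity $\int|\nabla\vphi_i^{n+1}|^2\dd\rho_i^{n+1}=W_2^2(\rho_i^{n+1},\rho_i^n)$. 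For (iii), Cauchy--Schwarz gives $\int_0^T\!\int_\Om|E_i^\t|\le T^{1/2}\|v_i^\t\|_{L^2_tL^2_{\rho_i^\t}}$, and the equality of the distributional limits of $E_i^\t$ and $\tilde E_i^\t$ is the standard JKO argument of \cite{KimMes,MauRouSan1}.

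\emph{The heat-content estimates (iv)--(v) and the pressure estimate (vi).} For (iv)--(v) I would follow \cite{EO,LauOtt} verbatim: each $\rho_i^\t(t,\cdot)$ is a characteristic function (Proposition \ref{prop:characteristic}) with $\HC_\e(\rho_i^\t(t,\cdot))\le C_0$, so the Esedo\={g}lu--Otto translation estimate yields $\int_\Om|\rho_i^\t(t,x+\d d)-\rho_i^\t(t,x)|\dd x\le C(\d+\sqrt\e)\HC_\e(\rho_i^\t(t,\cdot))$, and the companion bound $\int_{\RR^d}|\nabla G_\e\star\rho_i^\t(t,\cdot)|\dd x\le C\,\HC_\e(\rho_i^\t(t,\cdot))$; integrating in $t$ gives (v) and (iv). For (vi), Lemma \ref{lem:opt_cond} lets us write $p^{n+1}=\max\!\big(C_1-K_\e\star\rho_2^{n+1}-\Phi_1-\tfrac{b_1}{\t}\vphi_1^{n+1},\ C_2-K_\e\star\rho_1^{n+1}-\Phi_2-\tfrac{b_2}{\t}\vphi_2^{n+1}\big)$, a maximum of two Lipschitz functions on $\overline{\Om}$ (the two expressions agree on the interface because both one-sided inequalities become equalities there), hence $p^\t(t,\cdot)\in W^{1,2}(\Om)$ and \eqref{eq:pressure-gradient} holds a.e. Squaring \eqref{eq:pressure-gradient}, integrating over $\spt\rho_i^{n+1}$ and using $\int_{\spt\rho_i^{n+1}}f\dd x=\int_\Om f\rho_i^{n+1}\dd x$ (as $\rho_i^{n+1}\in\{0,1\}$): the Kantorovich terms sum in $n$ against $\t$ to $\sum_n b_i^2 W_2^2(\rho_i^{n+1},\rho_i^n)/\t\le 2b_iC_0$, the $|\nabla\Phi_i|^2$ terms are $\le\|\nabla\Phi_i\|_\infty^2|\Om|$, and $|\nabla K_\e\star\rho_j^{n+1}|\le\tfrac{C}{\e}\,G_{2\e}\star\rho_j^{n+1}$ by the Gaussian derivative bound $|\nabla G_\e(z)|=\tfrac{|z|}{2\e}G_\e(z)\le\tfrac{C_d}{\sqrt\e}G_{2\e}(z)$; this produces the claimed inequality. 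Since $0\le G_{2\e}\star\rho_j^\t\le1$, the right-hand side is $\le CT/\e^2+C$, and the $L^2$ bound on $p^\t-\fint_\Om p^\t$ follows by Poincaré--Wirtinger on the convex domain $\Om$.

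\emph{The new estimate (vii).} Fix $\xi\in C^1(\overline{\Om};\RR^d)$ and test $\nabla p^{n+1}$ against $\xi$ via \eqref{eq:pressure-gradient}:
\[\int_\Om\nabla p^{n+1}\cdot\xi\dd x=-\sum_{i=1}^2\int_{\spt\rho_i^{n+1}}\Big(\nabla K_\e\star\rho_{3-i}^{n+1}+\nabla\Phi_i+\tfrac{b_i}{\t}\nabla\vphi_i^{n+1}\Big)\cdot\xi\dd x.\]
The $\nabla\Phi_i$ terms are $\le\|\nabla\Phi_i\|_\infty|\Om|\,\|\xi\|_\infty$, and the Kantorovich terms are $\le\tfrac{b_i}{\t}|\Om|^{1/2}W_2(\rho_i^{n+1},\rho_i^n)\|\xi\|_\infty$, whose squares sum in $n$ against $\t$ to a finite multiple of $C_0$ --- i.e.\ an $L^2([0,T];(C^1)^*)$ bound. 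For the two heat-content terms I rewrite them as double integrals against $\rho_1^{n+1}(x)\rho_2^{n+1}(y)$ and $\rho_2^{n+1}(x)\rho_1^{n+1}(y)$, exchange $x\leftrightarrow y$ in the second, and use that $\nabla K_\e$ is odd to combine them into
\[-\int_\Om\int_\Om\rho_1^{n+1}(x)\,\rho_2^{n+1}(y)\,\nabla K_\e(x-y)\cdot\big(\xi(x)-\xi(y)\big)\dd x\dd y.\]
Then $|\xi(x)-\xi(y)|\le\|\xi\|_{C^1}|x-y|$ and $|\nabla K_\e(z)|\,|z|=\s\sqrt{\tfrac{2\pi}{\e}}\,\tfrac{|z|^2}{2\e}G_\e(z)\le C_d\,\s\sqrt{\tfrac{2\pi}{\e}}\,G_{2\e}(z)$, so this term is $\le C\|\xi\|_{C^1}\HC_{2\e}(\rho_1^{n+1},\rho_2^{n+1})$. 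Finally, on the constraint set $\HC_{2\e}\le\sqrt2\,\HC_\e$: by the Fourier representation from the proof of Lemma \ref{lem:conc} (valid since $\rho_i^{n+1}$ is a characteristic function), $\HC_\d(\rho_1,1-\rho_1)$ is a positive multiple of $\tfrac1{\sqrt\d}\int(1-\widehat G(\sqrt\d\,\xi))|\widehat\rho_1(\xi)|^2\dd\xi$, and $\tfrac1{\sqrt2}\tfrac{1-e^{-2s}}{1-e^{-s}}=\tfrac{1+e^{-s}}{\sqrt2}\le\sqrt2$ for all $s\ge0$ (here $s$ is the relevant fixed multiple of $\e|\xi|^2$). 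Using $\HC_\e(\rho_1^{n+1},\rho_2^{n+1})\le C_0$ (with $\sup_\e\HC_\e(\rho_{1,0},\rho_{2,0})<\infty$ for well-prepared data), all constants here are free of $\e$ and $\t$; summing $\t\,\|\nabla p^{n+1}\|_{(C^1)^*}^2$ over $n$ gives $(\nabla p^\t)\in L^2([0,T];(C^1(\Om))^*)$ uniformly, whence also the order-one distributional statement for a.e.\ $t$.

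\emph{Expected main obstacle.} Steps (i)--(vi) are essentially routine, (vi) being new only in bookkeeping. The delicate point is (vii): the dangerous contribution to $\nabla p^\t$ is $\nabla K_\e\star\rho_{3-i}^\t$, of size $\e^{-1}$ in $L^2_x$ (step (vi)) and still of size $\e^{-1/2}$ in $(C^1)^*$ if estimated directly. Only the symmetrization --- which replaces $\xi(x)$ by the \emph{difference} $\xi(x)-\xi(y)$, supplying the extra factor $|x-y|$ that upgrades $\tfrac{|z|}{\e}G_\e$ to $\tfrac{|z|^2}{\e}G_\e\lesssim G_{2\e}$ --- produces the $\e$-uniform bound; the elementary inequality $\HC_{2\e}\le\sqrt2\,\HC_\e$ on $\{\rho_1+\rho_2=1\}$ is what closes it cleanly. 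I expect this symmetrization, together with the pressure-compactness argument it enables, to be the only genuinely new ingredient beyond the JKO/Laux--Otto toolbox.
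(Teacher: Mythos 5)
Your proposal follows essentially the same route as the paper's proof: (i)--(v) are delegated to the standard JKO and Laux--Otto estimates exactly as the paper does, (vi) uses the identical Gaussian derivative bound $|\nabla G_\e|\lesssim\e^{-1/2}G_{2\e}$ combined with Poincar\'e, and (vii) is the same symmetrization argument (rewriting the heat-content contributions as a double integral against $\xi(x)-\xi(y)$ so that the extra factor $|x-y|$ upgrades $|z|\,G_\e$ to $G_{2\e}$), followed by comparing $\HC_{2\e}$ to $\HC_\e$ and invoking energy dissipation. Your write-up is merely somewhat more explicit than the paper's in two spots --- the pointwise Fourier computation giving $\HC_{2\e}\le\sqrt{2}\,\HC_\e$ on the constraint set (where the paper just appeals to ``$|z|^2G(z)\le\alpha G(z/\beta)$'' and ``monotonicity of $\HC_\e$''), and the observation that $\sup_\e\HC_\e(\rho_{1,0},\rho_{2,0})<\infty$ is the hidden hypothesis behind the $\e$-uniformity of the constant in (vii).
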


\begin{proof}
Let us notice that the proofs of point (i), (ii) and (iii) follow the exact same lines of the proofs of \cite[Lemma 3.3]{MauRouSan1} and \cite[Lemma 3.3]{KimMes}, so we omit them.

From Proposition \ref{prop:characteristic} we know that that $\rho^n_i$'s are characteristic functions, therefore the proofs of (iv) and (v) follow the exact same lines as the proof of \cite[Lemma 2.4.]{LauOtt}.

Let us give the details on (vi). From the identities \eqref{eq:pressure-gradient} from Lemma \ref{lem:opt_cond}, we have that there exists $C>0$ (independent of $\t>0$, which might increase from one inequality to the next) such that
\begin{align*}
\int_0^T\int_{\Om}|\nabla p^\t|^2(\rho_1^\t+\rho_2^\t)\dd x\dd t&\le C\sum_{n=1}^N\t\int_{\Om}\left((|\nabla K_\e\star\rho_2^n|^2+|\nabla\Phi_1|^2)\rho_1^n+(|\nabla K_\e\star\rho_1^n|^2+|\nabla\Phi_2|^2)\rho_2^n\right)\dd x\\
&+\frac{C\t}{\t^2}\sum_{n=1}^N\sum_{i=1}^2 b_iW_2^2(\rho_i^n,\rho_i^{n-1})\\
&\le C\int_0^T\int_{\Om}\frac{1}{\e^2}\left(\left(G_{2\e}\star\rho_2^\t\right)^2\rho_1^\t+\left(G_{2\e}\star\rho_1^\t\right)^2\rho_1^\t\right)\dd x\dd t + C.
\end{align*}
where we have used the uniform bounds on $(\rho_i^\t)_{\t>0}$ from the previous points and 
\begin{align*}
|\nabla K_\e\star\rho_i^n|^2=\frac{2\pi\sigma}{\e}\left(|\nabla G_\e|\star\rho_i^n\right)^2\le\frac{2\pi\sigma}{\e}\left(\frac{1}{\sqrt{\e}} G_{2\e}\star\rho_i^n\right)^2=\frac{2\pi\sigma}{\e^2}\left(G_{2\e}\star\rho_i^n\right)^2
\end{align*}
Since $\rho_1^n+\rho_2^n=1$ a.e., this previous bound implies that $(\nabla p^\t)_{\t>0}$ is uniformly bounded in $L^2([0,T]\times\Om;\R^d).$ As a consequence of Poincar\'e's inequality, we have that $\left(p^\t-\fint_{\Om}p^\t(\cdot,x)\dd x\right)_{\t>0}$ is uniformly bounded in $L^2([0,T]\times\Om)$. Both uniform bounds have the form $TC(1+1/\e^2).$ 

\medskip

To show (vii), let $\xi\in C^1([0,T]\times\Om;\R^d)$. Fix $t\in[n\t,(n+1)\t)$. Taking the inner product of both sides in \eqref{eq:pressure-gradient} with $\xi(t,\cdot)$ and integrating on $\Om$ w.r.t. $\rho_i^{n+1}$ (we drop the dependence on $t$ in the notation of $\xi$), we obtain
\begin{equation}\label{eq:nab_p1}
\int_{\Om}\nabla p^{n+1}\cdot\xi\rho_1^{n+1}\dd x=-\sigma\sqrt{2\pi}\int_{\Om}\frac{1}{\sqrt{\e}}\nabla G_\e\star\rho_2^{n+1}\cdot \xi\rho_1^{n+1}\dd x-\int_{\Om}\nabla\Phi_1\cdot \xi\rho_1^{n+1}\dd x - \int_{\Om}\frac{b_1}{\t}\nabla\vphi_1^{n+1}\cdot \xi \rho_1^{n+1}\dd x
\end{equation}
and interchanging the roles of $\rho_1^{n+1}$ and $\rho_2^{n+1}$, we get
\begin{equation}\label{eq:nab_p2}
\int_{\Om}\nabla p^{n+1}\cdot\xi\rho_2^{n+1}\dd x=-\sigma\sqrt{2\pi}\int_{\Om}\frac{1}{\sqrt{\e}}\nabla G_\e\star\rho_1^{n+1}\cdot \xi\rho_2^{n+1}\dd x -\int_{\Om}\nabla\Phi_2\cdot \xi\rho_2^{n+1}\dd x- \int_{\Om}\frac{b_2}{\t}\nabla\vphi_2^{n+1}\cdot \xi \rho_2^{n+1}\dd x
\end{equation}
First, we have
\begin{align*}
&\int_{\Om}\frac{1}{\sqrt{\e}}\nabla G_\e\star\rho_2^{n+1}\cdot \xi\rho_1^{n+1}\dd x+\int_{\Om}\frac{1}{\sqrt{\e}}\nabla G_\e\star\rho_1^{n+1}\cdot \xi\rho_2^{n+1}\dd x\\
&=\frac{1}{2\e}\int_{\R^d}\int_{\Om}z G(z)\left[\rho_2^{n+1}(x+\sqrt{\e}z)\cdot\xi(x)\rho_1^{n+1}(x)+\rho_1^{n+1}(x+\sqrt{\e}z)\cdot\xi(x)\rho_2^{n+1}(x)\right]\dd x\dd z\\
&=\frac{1}{2\e}\int_{\R^d}\int_{\Om}z G(z)\left[\rho_2^{n+1}(x+\sqrt{\e}z)\cdot\xi(x)\rho_1^{n+1}(x)+\rho_1^{n+1}(x)\cdot\xi(x-\sqrt{\e}z)\rho_2^{n+1}(x-\sqrt{\e}z)\right]\dd x\dd z\\
&=\frac{1}{2\e}\int_{\Om}\int_{\R^d}z G(z)\left[\rho_2^{n+1}(x+\sqrt{\e}z)\cdot\xi(x)\rho_1^{n+1}(x)-\rho_1^{n+1}(x)\cdot\xi(x+\sqrt{\e}z)\rho_2^{n+1}(x+\sqrt{\e}z)\right]\dd z\dd x\\
&=\frac{1}{2\e}\int_{\Om}\int_{\R^d}z G(z)\rho_2^{n+1}(x+\sqrt{\e}z)\rho_1^{n+1}(x)\cdot\left[\xi(x)-\xi(x+\sqrt{\e}z)\right]\dd z\dd x\\
&=\frac{1}{2\sqrt{\e}}\int_{\Om}\int_{\R^d}\int_1^0 z G(z)\rho_2^{n+1}(x+\sqrt{\e}z)\rho_1^{n+1}(x)\cdot (D\xi(x+s\sqrt{\e}z)z)\dd s\dd z\dd x
\end{align*}
where in the second and third equalities we have used the change of variables $x\mapsto x-\sqrt{\e}z$ and $z\mapsto -z$, respectively and the fundamental theorem of calculus in the last equality. Now, since 
$$|z|^2 G(z)\le \a G(z/\beta),\ \forall z\in\R^d,$$
for some universal constants $\a,\b>0$, by the previous chain of equalities, we obtain
\begin{align*}
\Bigg{|}\int_{\Om}\frac{1}{\sqrt{\e}}\nabla G_\e\star\rho_2^{n+1}\cdot \xi\rho_1^{n+1}\dd x+\int_{\Om}\frac{1}{\sqrt{\e}}\nabla G_\e\star\rho_1^{n+1}\cdot \xi\rho_2^{n+1}\dd x\Bigg{|}&\le C\HC_{2\e}(\rho_1^{n+1},\rho_2^{n+1})\|D\xi(t,\cdot)\|_{L^\infty}\\
&\le C\HC_{\e}(\rho_1^0,\rho_2^0)\|D\xi(t,\cdot)\|_{L^\infty},
\end{align*}
where, in the last inequality we used the monotonicity of $\HC_\e$  along the sequence $\left(\rho_1^n,\rho_2^n\right)_n$.

Furthermore, we have that
\begin{align*}
\int_{\Om}\frac{1}{\t}|\nabla\vphi_i^{n+1}| |\xi|\rho_i^{n+1}\dd x=\int_{\Om}|v_i^{n+1}|\rho_i^{n+1}|\xi|\dd x
\end{align*}
and
\begin{align*}
\Bigg{|}\int_{\Om}\nabla\Phi_1\cdot \xi\rho_1^{n+1}\dd x+\int_{\Om}\nabla\Phi_1\cdot \xi\rho_1^{n+1}\dd x\Bigg{|}\le C\|\xi\|_{L^1}
\end{align*}
Using the fact that we have piecewise constant interpolations, integrating the last equality in time on $[0,T]$, we get
$$\int_0^T\int_{\Om}|v_i^\t|\rho_i^\t|\xi|\dd x\dd t \le \left(\int_0^T\int_{\Om}|v_i^\t|^2\rho_i^\t\dd x\dd t\right)^{\frac12}\left(\int_0^T\int_{\Om}|\xi|^2\dd x\dd t\right)^{\frac{1}{2}}$$

Now, adding together \eqref{eq:nab_p1} and \eqref{eq:nab_p2}, then integrating in time on $[0,T]$ and using (ii), we obtain
\begin{align*}
\Bigg{|}\int_0^T\int_{\Om}\nabla p^\t\cdot\xi\dd x\dd t\Bigg{|}\le C\|D\xi\|_{L^1(C^0)}+ C\|\xi\|_{L^2(L^2)}\le C\|\xi\|_{L^2(C^{1})},
\end{align*}
where the constant $C>0$ is independent of $\t>0$ and $\e>0$. The thesis of follows.
\end{proof}

Now we can use the above pressure estimates to derive a system of continuous in time equations in the limit $\tau\to 0$ when $\epsilon$ is held fixed.

\begin{theorem}\label{thm:e_fixed_equations}
Let $\e>0$, $\rho_{1,0},\rho_{2,0}\in\sP(\Om)$ such that $\rho_{1,0}+\rho_{2,0}=1$ a.e. There exists $\rho_i\in{\rm{AC}}^2([0,T];\sP(\Om))$, $i=1,2$, $\ov p\in L^2([0,T];H^1(\Om))$ and $\zeta_1,\zeta_2\in L^2([0,T]\times\Om;\R^d)$ such that $\rho_1+\rho_2=1$ a.e. in $[0,T]\times\Om$, $b_1\zeta_1+b_2\zeta_2=\nabla\ov p$ a.e. in $[0,T]\times\Om$ and the system
\begin{equation}\label{eq:eps-fixed}
\left\{
\begin{array}{ll}
\partial_t\rho_1-\nabla\cdot\left[b_1^{-1}\rho_1(\nabla K_\e\star\rho_2+\nabla\Phi_1)+\zeta_1\right]=0, & (0,T)\times\Om,\\[5pt]
\partial_t\rho_2-\nabla\cdot\left[b_2^{-1}\rho_2(\nabla K_\e\star\rho_1+\nabla\Phi_2)+\zeta_2\right]=0, & (0,T)\times\Om,\\[5pt]
\rho_1(0,\cdot)=\rho_{1,0}, \quad \rho_2(0,\cdot)=\rho_{2,0}, & \Om.
\end{array}
\right.
\end{equation}
is satisfied in the sense of distributions on $(0,T)\times\Om.$
\end{theorem}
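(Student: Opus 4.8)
The plan is to run the standard Wasserstein gradient-flow limit $\t\da 0$ with $\e>0$ \emph{frozen}: extract weak limits using the uniform estimates of Lemma~\ref{lem:estimates}, pass to the limit in the continuity equation satisfied by the geodesic interpolations, and identify the limiting flux from the discrete Euler--Lagrange relation \eqref{eq:pressure-gradient}. The key structural fact that makes the case of fixed $\e$ easy is that $\rho\mapsto\nabla K_\e\star\rho$ is then a strongly continuous smoothing operator.

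\emph{Step 1: compactness.} Fix $\e>0$. By Lemma~\ref{lem:estimates}~(i), along a (not relabelled) subsequence $\rho_i^\t$ and $\tilde\rho_i^\t$ converge uniformly in $W_2$ on $[0,T]$ to a common limit $\rho_i\in{\rm{AC}}^2([0,T];\sP(\Om))$; since $0\le\rho_i^\t\le 1$ and $\rho_1^\t+\rho_2^\t=1$ a.e., the weak-$\star$ limit in $L^\infty$ gives $0\le\rho_i\le 1$ and $\rho_1+\rho_2=1$ a.e.\ on $[0,T]\times\Om$. Because each $\rho_i^\t(t,\cdot)$ is a characteristic function we have $(\rho_i^\t)^2=\rho_i^\t$, so by Lemma~\ref{lem:estimates}~(ii),
\[
\int_0^T\!\!\int_\Om|E_i^\t|^2\dd x\dd t=\int_0^T\!\!\int_\Om|v_i^\t|^2\rho_i^\t\dd x\dd t\le C,
\]
whence $E_i^\t\weakly E_i$ weakly in $L^2([0,T]\times\Om;\R^d)$, and by Lemma~\ref{lem:estimates}~(iii) the geodesic momenta $\tilde E_i^\t$ converge to the same $E_i$. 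By Lemma~\ref{lem:estimates}~(vi), with the constant now allowed to depend on $\e$, $\nabla p^\t$ is bounded in $L^2([0,T]\times\Om;\R^d)$ and $p^\t-\fint_\Om p^\t(\cdot,x)\dd x$ is bounded in $L^2([0,T];H^1(\Om))$; hence, up to a further subsequence, $p^\t-\fint_\Om p^\t(\cdot,x)\dd x\weakly\ov p$ with $\ov p\in L^2([0,T];H^1(\Om))$ and $\nabla p^\t\weakly\nabla\ov p$ in $L^2$. Finally, since $0\le\rho_i^\t\le 1$, the products $\rho_i^\t\nabla p^\t$ are bounded in $L^2$, and I \emph{define} $\zeta_i\in L^2([0,T]\times\Om;\R^d)$ by $b_i\zeta_i:=\lim_\t\rho_i^\t\nabla p^\t$ (weak $L^2$-limit along the same subsequence). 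Summing over $i$ and using $\rho_1^\t+\rho_2^\t=1$ gives $b_1\zeta_1+b_2\zeta_2=\nabla\ov p$ a.e.

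\emph{Step 2: continuity equation and identification of the flux.} The geodesic interpolations satisfy $\partial_t\tilde\rho_i^\t+\nabla\cdot\tilde E_i^\t=0$ in $\sD'((0,T)\times\Om)$; letting $\t\da 0$ with $\tilde\rho_i^\t\to\rho_i$ and $\tilde E_i^\t\weakly E_i$ yields $\partial_t\rho_i+\nabla\cdot E_i=0$ in $\sD'$. To identify $E_i$, rewrite \eqref{eq:pressure-gradient} using $v_i^{n+1}=\nabla\vphi_i^{n+1}/\t$ and multiply by $\rho_i^{n+1}$ (legitimate, since the gradient identity holds on $\spt(\rho_i^{n+1})$ and $\rho_i^{n+1}$ vanishes elsewhere), obtaining for $\{i,j\}=\{1,2\}$ and on each time interval
\[
b_iE_i^\t=-\rho_i^\t\,\nabla p^\t-\rho_i^\t\,\nabla K_\e\star\rho_j^\t-\rho_i^\t\,\nabla\Phi_i.
\]
Now pass to the limit term by term: $b_iE_i^\t\weakly b_iE_i$; $\rho_i^\t\nabla p^\t\weakly b_i\zeta_i$ by construction; $\rho_i^\t\nabla\Phi_i\weakly\rho_i\nabla\Phi_i$ since $\nabla\Phi_i\in L^\infty$ is fixed; and, $\e$ being fixed, $G_\e$ is smooth with bounded derivatives, so the uniform $W_2$-convergence $\rho_j^\t(t,\cdot)\to\rho_j(t,\cdot)$ together with dominated convergence on the bounded set $[0,T]\times\Om$ gives $\nabla K_\e\star\rho_j^\t\to\nabla K_\e\star\rho_j$ \emph{strongly} in $L^2$, so the strong $\times$ weak product converges, $\rho_i^\t\,\nabla K_\e\star\rho_j^\t\weakly\rho_i\,\nabla K_\e\star\rho_j$. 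Therefore $E_i=-b_i^{-1}\rho_i(\nabla K_\e\star\rho_j+\nabla\Phi_i)-\zeta_i$, and substituting into $\partial_t\rho_i+\nabla\cdot E_i=0$ yields exactly \eqref{eq:eps-fixed}. The initial condition $\rho_i(0,\cdot)=\rho_{i,0}$ and the ${\rm{AC}}^2$ regularity are both inherited from the $W_2$-equicontinuity in Lemma~\ref{lem:estimates}~(i).

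\emph{Main obstacle.} The genuinely delicate point is that, with $\e$ fixed, there is \emph{no} strong compactness of $(\rho_i^\t)_\t$ — no uniform $BV$ bound is available, cf.\ Lemma~\ref{lem:estimates}~(v), whose right-hand side is of size $\sqrt\e$ and does not vanish as $\t\da0$ — so the weak limit of the product $\rho_i^\t\nabla p^\t$ \emph{cannot} be identified with $\rho_i\nabla\ov p$. This is exactly why the defect fields $\zeta_i$ have to be carried along and only the combination $b_1\zeta_1+b_2\zeta_2=\nabla\ov p$ survives. Everything else is routine weak-compactness bookkeeping; the convolution term is harmless here precisely because $\e>0$ is frozen, and the real difficulty of the joint limit $\t,\e\da 0$ (where the $BV$ bounds re-emerge and the convolution must be handled carefully) is deferred to Section~\ref{sec:muskat}.
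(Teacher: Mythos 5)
Your proof is correct and follows essentially the same route as the paper's: extract weak limits from the uniform estimates of Lemma~\ref{lem:estimates}, pass to the limit in the continuity equation for the geodesic interpolations, introduce the defect fields $\zeta_i$ as weak $L^2$-limits of $b_i^{-1}\rho_i^\t\nabla p^\t$, and identify the limiting flux term by term from \eqref{eq:pressure-gradient}, using that $\nabla K_\e\star\rho_j^\t$ converges strongly (in fact uniformly) because $\e$ is frozen. The only cosmetic improvement over the paper is your observation that $(\rho_i^\t)^2=\rho_i^\t$ upgrades the bound on $E_i^\t$ from the measure bound of Lemma~\ref{lem:estimates}(iii) to an $L^2$ bound, which is a slightly cleaner way to extract the same limit.
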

It remains open whether in the previous theorem we have strong convergence $\rho_i^\t\to\rho_i$ in $L^2([0,T]\times\Om)$ as $\t\da 0$, and in particular whether one can claim that $\zeta_i=b_i^{-1}\rho_i\nabla\ov p.$

\begin{proof}[Proof of Theorem \ref{thm:e_fixed_equations}]
Using the uniform (in $\t$) estimates in Lemma \ref{lem:estimates}, we have the existence of $\rho_i\in{\rm{AC}}^2([0,T];\sP(\Om))$, $i=1,2$ such that up to passing to a subsequence, both $(\rho_1^\t,\rho_2^\t)$ and $(\tilde\rho_1^\t,\tilde\rho_2^\t)$ converge to them uniformly in time, w.r.t. $W_2$. 

Since $\left(p^\t-\fint_{\Om}p^\t(\cdot,x)\dd x\right)_{\t>0}$ is uniformly bounded in $L^2([0,T];H^1(\Om))$, there exists $\ov p\in L^2([0,T];H^1(\Om))$ such that up to passing to a subsequence, $p^\t-\fint_{\Om}p^\t(\cdot,x)\dd x\weakly \ov p$, weakly in $L^2([0,T]\times\Om)$ and $\nabla p^\t\weakly \nabla \ov p$, weakly in $L^2([0,T]\times\Om;\R^d)$ as $\t\da 0$.

We only need to identify the limits of the momentum variables $(E_i^\t)_{\t>0}$. From the weak convergence of the density variables, we have that up to passing to a subsequence,
$\rho_1^\t\nabla K_\e\star\rho_2^\t\weaklys\rho_1\nabla K_\e\star\rho_2$, as $\t\da 0$ and similarly 
$\rho_2^\t\nabla K_\e\star\rho_1^\t\weaklys\rho_2\nabla K_\e\star\rho_1$, and $\rho_i^\t\nabla\Phi_i\weaklys \rho_i\nabla\Phi_i$, as $\t\da 0$ as vector measures.

Furthermore, since $(\rho_i^\t\nabla p^\t)_{\t>0}$ is uniformly bounded in $L^2([0,T]\times\Om;\R^d)$, there exists $\zeta_i\in L^2([0,T]\times\Om;\R^d)$, $i=1,2$ such that up to passing to a subsequence $b_i^{-1}\rho_i^\t\nabla p^\t\weakly \zeta_i$ weakly in $L^2([0,T]\times\Om;\R^d)$, as $\t\da 0$.

Last, by the previous arguments, we have 
$$b_1 b_1^{-1}\rho_1^\t\nabla p^\t+b_2 b_2^{-1}\rho_2^\t\nabla p^\t=\nabla p^\t\weakly \nabla \ov p=b_1\zeta_1+b_2\zeta_2,$$
as $\t\da 0$ in $L^2([0,T]\times\Om;\R^d).$ Therefore, the thesis of the theorem follows.

\end{proof}
It is open whether the continuum in time densities in Theorem \ref{thm:e_fixed_equations} are characteristic functions.  Indeed, since we can only guarantee that the discrete densities converge weakly to the continuum densities, the characteristic function property may be lost in the limit.   We point out that due to the energy bounds, the densities are ``almost"  characteristic functions, i.e. we have

\begin{lemma}
For $(\rho_1^\t,\rho_2^\t)$ given as above with $\e>0$ fixed, for any $\a\in(0,1)$ we have
$$
\sL^d( \alpha \leq \rho_i^\t \leq 1-\alpha) \leq \frac{3\sqrt{\e}}{\sigma{\sqrt{2\pi}} \alpha(1-\alpha)}\hc_{\e}(\rho_1^{\tau}, \rho_2^{\tau}) \hbox{ for all } \t.
$$
\end{lemma}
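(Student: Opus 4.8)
The plan is to extract the bound directly from $\hc_\e(\rho_1,\rho_2)=\s\sqrt{2\pi/\e}\int_{\Om}(G_\e\star\rho_1)\,\rho_2\dd x$, using only that $\rho_1+\rho_2=1$ a.e.\ in $\Om$ together with the convexity of $\Om$. After dividing by $\s\sqrt{2\pi/\e}$, the claim is equivalent to $\a(1-\a)\,\sL^d(S)\le 3\int_{\Om}(G_\e\star\rho_1)\rho_2\dd x$, where $S:=\{\a\le\rho_1\le 1-\a\}$. Since $\rho_2=1-\rho_1$ on $\Om$, this set is the same for $\rho_1$ and $\rho_2$; on $S$ one has $\rho_1,\rho_2\ge\a$ and, as $t\mapsto t(1-t)$ is minimised on $[\a,1-\a]$ at the endpoints, $\rho_1\rho_2=\rho_1(1-\rho_1)\ge\a(1-\a)$. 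Hence $\a(1-\a)\mathbf 1_S\le\rho_1\rho_2$ pointwise in $\Om$, so $\a(1-\a)\sL^d(S)\le\int_\Om\rho_1\rho_2\dd x$, and it remains to show $\int_\Om\rho_1\rho_2\dd x\le 3\int_\Om(G_\e\star\rho_1)\rho_2\dd x$.

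For this comparison I would extend the densities by zero and use that $\int_\Om\rho_i\dd x=1$, that $\int_\Om(G_\e\star\rho_1)\rho_1\dd x=\|G_{\e/2}\star\rho_1\|_{L^2(\R^d)}^2$ (self-adjointness of convolution and $G_{\e/2}\star G_{\e/2}=G_\e$), and that $\int_\Om G_\e\star\rho_1\dd x=\int_\Om\rho_1\,(G_\e\star\mathbf 1_\Om)\dd x=1-\eta$ with $\eta:=\int_{\R^d\setminus\Om}G_\e\star\rho_1\dd x\ge 0$. A short computation then yields
\[
\int_\Om\rho_1\rho_2\dd x-\int_\Om(G_\e\star\rho_1)\rho_2\dd x=\eta-\bigl(\|\rho_1\|_{L^2}^2-\|G_{\e/2}\star\rho_1\|_{L^2}^2\bigr)\le\eta,
\]
since convolution does not increase the $L^2$-norm (equivalently, by Plancherel, $\|\rho_1\|^2-\|G_{\e/2}\star\rho_1\|^2=\int(1-e^{-\e|\xi|^2})|\widehat\rho_1(\xi)|^2\dd\xi\ge0$). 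In particular, with the periodic extension of \cite{EO,LauOtt} one has $\eta=0$ and the estimate already holds with constant $1$; for the zero extension it only remains to bound the ``leakage'' term $\eta\le 2\int_\Om(G_\e\star\rho_1)\rho_2\dd x$, which combined with the previous paragraph gives the asserted inequality. The stated constant $3$ is thus not sharp, and any fixed finite loss in this last step is harmless.

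The only real point is therefore the bound on $\eta$ under the zero extension — with the periodic extension there is nothing to prove. Here the convexity of $\Om$ is essential: heat that escapes a convex domain stays within $O(\sqrt\e)$ of $\partial\Om$ (formally, for $x\notin\Om$ the reflection of $x$ across the supporting hyperplane at its nearest boundary point lies in the corresponding half-space and is nearer to every point of $\Om$ than $x$, so $G_\e\star\rho_1$ at $x$ is dominated by its values on an inner $\sqrt\e$-collar of $\partial\Om$), and matching this collar contribution against $G_\e\star\rho_2$ via $\rho_1+\rho_2=\mathbf 1_\Om$ bounds $\eta$ by a multiple of $\int_\Om(G_\e\star\rho_1)\rho_2\dd x$. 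Alternatively, for $\e$ small relative to $\Om$ one may combine the symmetrised inequality $2\int_\Om(G_\e\star\rho_1)\rho_2\dd x\ge\a\int_S G_\e\star\mathbf 1_\Om\dd x$ (restrict both $\int_\Om(G_\e\star\rho_1)\rho_2$ and $\int_\Om(G_\e\star\rho_2)\rho_1$ to $S$, where $\rho_i\ge\a$, and add) with the lower bound $G_\e\star\mathbf 1_\Om\ge\tfrac12-O(\sqrt\e)$ on $\overline\Om$, again obtained by comparison with a supporting half-space. No ingredient beyond the estimates of Lemma \ref{lem:estimates} and the convexity of $\Om$ is needed.
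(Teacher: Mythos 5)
Your first step---bounding $\sL^d(S)$ for $S=\{\alpha\le\rho_1^\t\le1-\alpha\}$ by $\tfrac{1}{\alpha(1-\alpha)}\int_\Om\rho_1^\t\rho_2^\t\dd x$ via $t(1-t)\ge\alpha(1-\alpha)$ on $[\alpha,1-\alpha]$---coincides with the paper's opening move. From there you take a different and in some respects cleaner route: instead of the paper's Jensen argument comparing $\int_\Om(\rho_1^\t)^2$ with $\int_\Om(G_\e\star\rho_1^\t)^2$, you compare $\int_\Om\rho_1^\t\rho_2^\t$ directly with $\int_\Om(G_\e\star\rho_1^\t)\rho_2^\t$ and record the identity
$\int_\Om\rho_1^\t\rho_2^\t-\int_\Om(G_\e\star\rho_1^\t)\rho_2^\t=\eta-\bigl(\|\rho_1^\t\|_{L^2}^2-\|G_{\e/2}\star\rho_1^\t\|_{L^2}^2\bigr)\le\eta$,
which is correct, uses only $\rho_1^\t+\rho_2^\t=1$ on $\Om$, and in the periodic setting immediately gives the bound with constant $1$. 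In particular you never use the $\{0,1\}$-valuedness of $\rho_i^\t$ from Proposition~\ref{prop:characteristic}, whereas the paper's final estimate $|\rho_1^\t(x)-\rho_1^\t(x-\sqrt\e z)|\le\rho_1^\t(x)\rho_2^\t(x-\sqrt\e z)+\rho_1^\t(x-\sqrt\e z)\rho_2^\t(x)$ relies on it essentially.

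The genuine gap in your proposal is the zero-extension leakage bound $\eta\le 2\int_\Om(G_\e\star\rho_1^\t)\rho_2^\t\dd x$. The ``heat stays in a $\sqrt\e$-collar of $\partial\Om$'' observation tells you where the escaping mass comes from, but it does not convert $\eta$ into a multiple of the heat content: $\int_\Om(G_\e\star\rho_1^\t)\rho_2^\t$ measures the interaction of $\rho_1^\t$ and $\rho_2^\t$ across their common \emph{interior} interface and is insensitive to how much of $\rho_1^\t$ sits against $\partial\Om$. Writing $\rho_1^\t=\chi_A$, to leading order $\eta\sim\sqrt\e\,\mathrm{Per}(A;\partial\Om)$ while $\int_\Om(G_\e\star\rho_1^\t)\rho_2^\t\sim\sqrt\e\,\mathrm{Per}(A;\Om)$, and the ratio $\mathrm{Per}(A;\partial\Om)/\mathrm{Per}(A;\Om)$ is not universally bounded by $2$ (already for a half-ball it equals the dimension-dependent ratio of the spherical cap area to the flat disc, which grows with $d$). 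So the constant you need is not a universal one and the supporting-hyperplane/reflection sketch does not produce a proof. Your alternative route via $G_\e\star\mathbf 1_\Om\ge\tfrac12-O(\sqrt\e)$ has the further drawback that it is restricted to $\e$ small relative to $\Om$, while the lemma is stated for a fixed but arbitrary $\e>0$. (For what it is worth, the paper's displayed inequality $\int_\Om[\rho_1^\t-(G_\e\star\rho_1^\t)^2]\le\tfrac{\sqrt\e}{\sigma\sqrt{2\pi}}\HC_\e+\int_\Om(G_\e\star\rho_1^\t)(\rho_1^\t-G_\e\star\rho_1^\t)$ is also in fact an equality only when $\eta=0$, and reverses for the zero extension, so it too is implicitly periodic; the paper does not prove the leakage bound you would need either, it sidesteps it via the characteristic-function property.) To fill the gap you should either adopt the periodic extension explicitly---where your argument is complete and gives a better constant than the paper's---or invoke the $\{0,1\}$-valuedness as the paper does, or supply a genuine quantitative boundary estimate (e.g.\ a relative isoperimetric inequality relating $\mathrm{Per}(A;\partial\Om)$ and $\mathrm{Per}(A;\Om)$ under the volume constraint $|A|=1$) that you do not currently have.
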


\begin{proof}
Let us set $i=1$, the other case will be parallel. We have
$$\sL^d(\alpha \leq \rho_1^\t \leq 1-\alpha)  \leq \frac{1}{\alpha(1-\alpha)}\int_{\Om} \rho^{\tau}_1(x)(1-\rho^{\t}_1(x))\dd x=\frac{1}{\alpha(1-\alpha)}\int_{\Om}\left[\rho^{\tau}_1(x)-\rho^{\t}_1(x)^2\right]\dd x.$$
By Jensen's inequality, the above is smaller than 
\begin{align*}
&\frac{1}{\alpha(1-\alpha)}\int_{\Om}\left[ \rho^{\t}_1(x)-(G_{\epsilon}\star\rho^{\t}_1)(x)^2\right]\dd x\\
&\leq \frac{\sqrt{\e}}{\sigma{\sqrt{2\pi}}\alpha(1-\alpha)}\hc_{\e}(\rho_1^{\tau}, \rho_2^{\tau})+\frac{1}{\alpha(1-\alpha)}\int_{\Om} (G_{\epsilon}\star\rho^{\tau}_1)(x)\Big(\rho^{\t}_1(x)-(G_{\epsilon}\star\rho^{\t}_1)(x)\Big)\dd x.
\end{align*}
We then estimate 
$$\frac{1}{\alpha(1-\alpha)}\int_{\Om} (G_{\epsilon}\star\rho^{\tau}_1)(x)\Big(\rho^{\t}_1(x)-(G_{\epsilon}\star\rho^{\t}_1)(x)\Big)\dd x\leq \frac{1}{\alpha(1-\alpha)}\int_{\Om} \int_{\RR^d} G(z) |\rho_1^{\t}(x)-\rho_1^{\t}(x-\sqrt{\e} z)|\dd z\dd x $$
Since $\rho^{\t}_1$ and  $\rho^{\t}_2$ take values in $\{0,1\}$, we have $  |\rho_1^{\t}(x)-\rho_1^{\t}(x-\sqrt{\e} z)|\leq \rho^{\t}_1(x)\rho^{\t}_2(x-\sqrt{\e}z)+\rho^{\t}_1(x-\sqrt{\e}z)\rho^{\t}_2(x)$.  Applying these inequalities, the result follows. 
\end{proof}

\section{Muskat flow with surface tension}\label{sec:muskat}
In this section, we complete the proof of Theorem \ref{thm:main} and show that  when $\epsilon=\epsilon_{\t}$ goes to zero along with $\tau$, the time interpolated minimizing movements scheme constructed in 
\eqref{def:rho-tau}-\eqref{def:vpE-tau} converges to a weak formulation of the Muskat problem with surface tension under the energy convergence assumption (\ref{assumption}).  This amounts to showing that each quantity in the system  of Euler-Lagrange equations given in (\ref{eq:pressure-gradient}) converges (in an appropriate sense) to the correct limiting object.  In particular, we will need strong $L^1$ convergence of the density functions in $[0,T]\times \Omega$.  To obtain the necessary compactness for strong $L^1$ convergence,  we develop an adaptation of an Aubin-Lions type lemma in Proposition \ref{prop:strong_limit}.  We then conclude our result by verifying the convergence of the Euler-Lagrange equations to the weak formulation of the Muskat problem in a similar manner to the approach in \cite{LauOtt}.

Before we introduce the weak formulation of the Muskat problem, let us recall the classical formulation of the problem.  When the two phases are separated by a smooth interface $\Gamma$, the Muskat problem is given by the continuity equation 
\begin{equation}\label{eq:continuity_repeated}\partial_t\rho_i - b^{-1}_i\nabla\cdot \big((\nabla p_i + \nabla \Phi_i)\rho_i\big) = 0\tag{MP$_1$}
\end{equation}
along with the free boundary problem for the pressure
\begin{equation}\label{eq:muskat_repeated}
\left\{\begin{array}{lll}
-\Delta p_i = \Delta \Phi_i &\hbox{ in } &\spt(\rho_i);\\ \\
\partial_n  (p_i + \Phi_i) =0 &\hbox{ on } &\partial\Omega; \\ \\
V = b_1^{-1}\partial_{n} (p_1 + \Phi_1) = b_2^{-1}\partial_{n}(p_2+\Phi_2) &\hbox{ on }& \Gamma ;\\ \\
\left[p\right]:=(p_1-p_2) = \frac{\sigma}{2}\kappa & \hbox{ on } &\Gamma,\\ \\
\tilde{n}=n & \hbox{on} & \partial \Gamma\cap \partial\Omega;\\
\end{array}\right.\tag{MP$_2$}
\end{equation}
\color{black}
where $\kappa$ is the mean curvature of $\Gamma$, oriented to be positive when $\spt(\rho_2)$ is convex at the point. The weak formulation of the Muskat problem with surface tension is provided in Definition \ref{def:weak_sol} below.  
 \color{black}
\begin{definition}\label{def:weak_sol}
We say that $(\rho_i,v_i,p)$ is a weak solution to the Muskat problem with surface tension, if for a.e. $T>0$, $\rho_i\in L^1([0,T];BV(\Om;\{0,1\}))\cap{\rm{AC}}^2([0,T];\sP(\Om))$, $\rho_1\rho_2=0$ and $\rho_1+\rho_2=1$ a.e., $v_i\in L^2([0,T];L^2_{\rho_{i,t}}(\Om;\R^d))$, $p\in L^2([0,T]; (C^{0,\a}(\Om))^*)$ and 
for any $\psi\in C^\infty([0,T]\times\ov\Om)$ and for any  vector field $\xi\in C^\infty([0,T]\times\Om;\R^d)$ with zero normal component on $\partial\Omega$,  we have
\begin{equation}\label{eq:continuity}
\int_0^T\int_{\Om} \left(\rho_i v_i  \cdot \nabla \psi + \rho_i \partial_t\psi \right)\dd x \dd t = \left[\int_{\Om}\rho_i \psi \dd x \right]^T_0
\end{equation}
with 
\begin{equation}\label{eq:goal}   
\int_0^T\int_{\Om} \sum_i \rho_i(b_iv_i+\nabla\Phi_i)\cdot \xi - p\nabla \cdot \xi + \frac{\sigma}{2}\left( \frac{D\rho_1}{|D\rho_1|}\otimes \frac{D\rho_1}{|D\rho_1|} : \nabla \xi  + \nabla \cdot \xi \right) \left(|D \rho_1| +|D \rho_2|  \right)\dd x\dd t =0 
\end{equation} 
\end{definition}

\begin{remark}
Let us remark here that $\frac{D\rho_1}{|D\rho_1|}$ stands for the $L^\infty$ density of $D\rho_1$ with respect to the total variation measure $|D\rho_1|$ (after using the Radon-Nikodym differentiation). Furthermore, let us notice that the term $\left( \frac{D\rho_1}{|D\rho_1|}\otimes \frac{D\rho_1}{|D\rho_1|} : \nabla \xi \right) \left(|D \rho_1| +|D \rho_2|  \right)$ is meaningful in the sense that $f(\nu/|\nu|)\dd|\nu|$ defines a matrix valued element of $\sM(\Om)$ for any $f:\R^d\to\R^{d\times d}$ which is continuous and 1-homogeneous (see for instance \cite[Proposition 3.15]{AmbFusPal} in the case when $\nu=D\rho$ for some $\rho\in BV(\Om)$). In particular, if $\rho\in BV(\Om;\{0,1\})$, then $D\rho/|D\rho|$ stands for the measure theoretic normal to the boundary of the set $\spt(\rho)$ and by $\nu/|\nu|$ we mean the density of $\nu$ with respect to its total variation measure $|\nu|$. 
\end{remark}
\begin{remark}\label{rmk:boundary}
Although we restrict our attention in the weak curvature equation (\ref{eq:goal}) to test functions with vanishing normal component on $\partial\Omega$, we do not lose information at the boundary.  The weak continuity equation (\ref{eq:continuity}) encodes the zero normal boundary condition for the velocities, and (\ref{eq:goal}) still encodes the condition that $\Gamma$ and $\partial\Omega$ meet orthogonally (c.f. the proof of Lemma \ref{lem:consistency}).
\end{remark}

Now we are ready to state the main result of our paper.
\begin{theorem}\label{thm:MAIN}
Given initial data $\rho_{1,0},\rho_{2,0}\in BV(\Om)$ such that $\rho_{1,0}\rho_{2,0}=0$ and $\rho_{1,0}+\rho_{2,0}=1$ a.e. in $\Om$ and Lipschitz continuous potential functions $\Phi_1,\Phi_2:\Om\to\R$, there exists $(\rho_i,v_i, p)$ with $i=1,2$ such that under the energy convergence assumption \eqref{assumption},  it solves \eqref{eq:continuity_repeated}-\eqref{eq:muskat_repeated} in the sense of Definition \ref{def:weak_sol}.
\end{theorem}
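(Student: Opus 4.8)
The plan is to combine the discrete-time estimates from Section \ref{sec:MM} (in the regime $\e = \e_\t \da 0$) with the compactness theory for Wasserstein gradient flows, and then pass to the limit in the Euler-Lagrange inequality \eqref{ineq:pressure_optimality}--\eqref{eq:pressure-gradient}, using the energy convergence assumption \eqref{assumption} to handle the singular surface-tension term exactly as in \cite{LauOtt}. First I would run the minimizing movements scheme \eqref{eq:mm} with $\e=\e_\t$ coupled to $\t$ (say $\e_\t\to 0$ slowly enough that the $\t$-estimates below remain uniform), producing the piecewise constant interpolants $(\rho_i^{\e,\t}, v_i^{\e,\t}, p^{\e,\t})$ together with their geodesic counterparts. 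By Proposition \ref{prop:characteristic} each $\rho_i^{\e,\t}(t,\cdot)$ is a characteristic function, which gives the first bullet of Theorem \ref{thm:main}. From Lemma \ref{lem:estimates}(i)--(iii) I get the equicontinuity $W_2(\rho_i^{\e,\t}(t),\rho_i^{\e,\t}(s))\le C\sqrt{t-s+\t}$ and the momentum bounds, and from Lemma \ref{lem:estimates}(iv)--(v) the crucial BV-type estimate $\int_0^T\int_\Om |\rho_i^{\e,\t}(t,x+\d d)-\rho_i^{\e,\t}(t,x)|\dd x\dd t \le CT(\d+\sqrt{\e})$.

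Next I would establish strong $L^1([0,T]\times\Om)$ compactness of $(\rho_i^{\e,\t})$ via the Aubin--Lions argument advertised in Proposition \ref{prop:strong_limit}: the spatial translation estimate (v) gives compactness in $x$ uniformly in $t$ as $\e\to 0$, while the $W_2$-equicontinuity in (i) provides the time-regularity needed to upgrade to strong compactness on the space-time cylinder; this is the adaptation of \cite{LauOtt}'s argument to the $W_2$ metric mentioned in Remark (2). The limit $\rho_i$ then lies in $L^1([0,T];BV(\Om))\cap {\rm AC}^2([0,T];\sP(\Om))$, and since strong $L^1$ limits of $\{0,1\}$-valued functions summing to one retain these properties, \eqref{eq:MP1} holds. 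For the pressure, Lemma \ref{lem:estimates}(vii) gives $(\nabla p^{\e,\t})$ uniformly bounded in $L^2([0,T];(C^1(\Om))^*)$ \emph{independently of $\e$}, so up to a subsequence $\nabla p^{\e,\t}\weaklys \nabla p$ weakly-$\star$ there, with $p\in L^2([0,T];(C^{0,\a}(\Om))^*)$; the velocities $v_i^{\e,\t}$ converge weakly in $L^2_{\rho_i}$ after passing $\rho_i^{\e,\t}v_i^{\e,\t}\weaklys \rho_i v_i$ as measures, using the strong density convergence to identify the product. The continuity equation \eqref{eq:continuity} passes to the limit directly from $\partial_t\tilde\rho_i^{\e,\t}+\nabla\cdot\tilde E_i^{\e,\t}=0$ and the matching of geodesic and piecewise constant interpolants.

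The heart of the proof is passing to the limit in the weak curvature equation to obtain \eqref{eq:goal}. Testing \eqref{eq:pressure-gradient} against $\rho_i^{\e,\t}\xi$, summing over $i$, and integrating in time reduces the pressure/potential/velocity terms to limits already handled above; the remaining term involves $\sum_i \int \nabla K_\e\star\rho_j^{\e,\t}\cdot \xi\,\rho_i^{\e,\t}$, which is precisely the discrete approximation of the curvature whose convergence to $\frac{\sigma}{2}\int(\mathrm{Id}-\nu\otimes\nu):\nabla\xi\,(|D\rho_1|+|D\rho_2|)$ is the content of \cite{LauOtt} under \eqref{assumption} --- I would invoke the results recalled in the appendix essentially verbatim, since the only new input is that our densities are $\{0,1\}$-valued characteristic functions (guaranteed by Proposition \ref{prop:characteristic}) summing to one, which is exactly the structure \cite{LauOtt} requires. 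The main obstacle is this last step: without \eqref{assumption} the discrete heat-content curvature term can concentrate or cancel in the limit (loss of interface), and the identification of the limiting first-variation of the perimeter with the measure-theoretic mean curvature genuinely relies on no loss of mass in the $\Gamma$-convergence $\HC_\e\to\cE_s$; a secondary difficulty is the joint limit itself, since for $\e$ fixed one only gets the weak system \eqref{eq:eps-fixed} and strong density compactness is recovered only because the $\sqrt{\e}$ term in estimate (v) vanishes --- so the coupling $\e=\e_\t\to 0$ must be chosen compatibly with all $\t$-uniform bounds. Finally I would note that the limiting $(\rho_i,v_i,p)$ satisfies Definition \ref{def:weak_sol}, and invoke Lemma \ref{lem:consistency} to confirm this weak formulation encodes \eqref{eq:muskat_repeated}, completing the proof of Theorem \ref{thm:MAIN}.
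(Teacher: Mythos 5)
Your proposal follows the same route as the paper: run the scheme \eqref{eq:mm} with $\e=\e_\t\da 0$, use Proposition~\ref{prop:characteristic} for the characteristic-function property, Lemma~\ref{lem:estimates} for the uniform estimates, Proposition~\ref{prop:strong_limit} (Aubin--Lions) for strong $L^1$ compactness, Lemma~\ref{lem:estimates}(vii) and Theorem~\ref{thm:limits} for the pressure and momentum limits, and Proposition~\ref{prop:limit_eq} (built on the Laux--Otto appendix lemmas) to pass to the limit in the weak curvature equation under \eqref{assumption}. The paper's actual proof of Theorem~\ref{thm:MAIN} is literally a pointer to Proposition~\ref{prop:strong_limit} and Proposition~\ref{prop:limit_eq}, so your plan reconstructs it faithfully.

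One concrete slip: you assert that the heat-content curvature term $\sum_i\int\nabla K_\e\star\rho_j^{\e,\t}\cdot\xi\,\rho_i^{\e,\t}$ converges to $\tfrac{\sigma}{2}\int(\mathrm{Id}-\nu\otimes\nu):\nabla\xi\,(|D\rho_1|+|D\rho_2|)$, i.e.\ the standard weak mean-curvature form from \cite{LauOtt}. That is not the limit here. Proposition~\ref{prop:limit_eq} shows the limit is
\[
\int_0^T\int_\Om\frac{\sigma}{2}\left(\frac{D\rho_1}{|D\rho_1|}\otimes\frac{D\rho_1}{|D\rho_1|}:\nabla\xi+\nabla\cdot\xi\right)\left(|D\rho_1|+|D\rho_2|\right)\dd x\dd t,
\]
i.e.\ the contraction with $\mathrm{Id}+\nu\otimes\nu$, not $\mathrm{Id}-\nu\otimes\nu$. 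The discrepancy is exactly $2\,\nu\otimes\nu:\nabla\xi$, and it is not a typo: the discrete pressure of Lemma~\ref{lem:opt_cond} already absorbs a singular interfacial contribution, so in the limit the weak pressure $p$ of Definition~\ref{def:weak_sol} carries the surface measure $\sigma\dd\mu$ (see Lemma~\ref{lem:consistency} and the remark after it), and it is the combination $-\int p\nabla\cdot\xi$ plus the $(\mathrm{Id}+\nu\otimes\nu)$ term that reproduces the Young--Laplace jump condition in \eqref{eq:muskat_repeated}. If you had kept the $(\mathrm{Id}-\nu\otimes\nu)$ form you would not obtain \eqref{eq:goal}; so while your references to the appendix are to the right lemmas, the identification of their output as the ``usual'' weak curvature is the one place your write-up would not close.
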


We postpone the proof of the previous theorem to the end of this section. First, let us show a consistency result, i.e. that classical solutions of the Muskat problem satisfy the weak formulation \eqref{eq:continuity} and \eqref{eq:goal}.
\begin{lemma}\label{lem:consistency}
If smooth solutions of \eqref{eq:continuity_repeated}and \eqref{eq:muskat_repeated} exist with $\rho_i = \chi_{A_i}$ and with a $C^2$ hypersurface $\Gamma = \partial A_1 = \partial A_2$, then $\rho_i$ satisfies  \eqref{eq:continuity}  and \eqref{eq:goal} with the choice of 
\begin{equation}\label{equation00}
v_i := -b_i^{-1}(\nabla p_i+\nabla\Phi_i)  \hbox{ and } p = p_1\rho_1 + p_2\rho_2 + \sigma\dd \mu,
\end{equation}
where $\mu\in (C([0,T]\times \Om))^*$ is the surface measure of $\Gamma=\cup_{t>0}(\Gamma_t\times \{t\})$, i.e. after disintegration
$$\mu=\sH^{d-1}\mres\Gamma_t\otimes\sL^1\mres[0,T]$$ or using test functions
$$\int_0^T \int_{\Om} f \dd \mu = \int_0^T\int_{\Gamma_t} f \dd\sH^{d-1}\dd t \hbox{ for any } f\in C([0,T]\times \Om).
$$
\end{lemma}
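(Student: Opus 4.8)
The plan is to verify the two integral identities \eqref{eq:continuity} and \eqref{eq:goal} by direct computation, plugging in the prescribed fields from \eqref{equation00} and using, one at a time, the five conditions in \eqref{eq:muskat_repeated} together with \eqref{eq:continuity_repeated}; since the solution is assumed smooth, every integration by parts below is legitimate.

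For \eqref{eq:continuity}: write $\rho_i(t,\cdot)=\chi_{A_{i,t}}$, where the $A_{i,t}$ are the two complementary regions and $\Gamma_t=\partial A_{1,t}\cap\Om=\partial A_{2,t}\cap\Om$ moves smoothly in $t$. By the transport (Reynolds) theorem, $\frac{{\rm d}}{{\rm d}t}\int_\Om\rho_i\psi\dd x=\int_{A_{i,t}}\partial_t\psi\dd x+\int_{\Gamma_t}\psi\,W\dd\sH^{d-1}$ with $W$ the normal speed of $\Gamma_t$. On the other hand $\int_\Om\rho_i v_i\cdot\nabla\psi\dd x=\int_{A_{i,t}}\big(\nabla\cdot(\psi v_i)-\psi\,\nabla\cdot v_i\big)\dd x$, and $\nabla\cdot v_i=-b_i^{-1}\Delta(p_i+\Phi_i)=0$ on $\spt(\rho_i)$ by the first line of \eqref{eq:muskat_repeated}; applying the divergence theorem on $A_{i,t}$ and discarding the portion of $\partial A_{i,t}$ lying on $\partial\Om$, where $v_i\cdot n=-b_i^{-1}\partial_n(p_i+\Phi_i)=0$ by the second line, leaves $\int_{\Gamma_t}\psi\,(v_i\cdot n)\dd\sH^{d-1}$. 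Finally the third line of \eqref{eq:muskat_repeated} is precisely the kinematic condition $W=v_i\cdot n$ on $\Gamma_t$ (it identifies the interface speed $V$ with $b_i^{-1}\partial_n(p_i+\Phi_i)=-v_i\cdot n$, consistently for $i=1,2$, which is exactly what makes the two-sided normal traces agree), so integrating in $t$ gives $\int_0^T\int_\Om(\rho_i v_i\cdot\nabla\psi+\rho_i\partial_t\psi)\dd x\dd t=\big[\int_\Om\rho_i\psi\dd x\big]_0^T$.

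For \eqref{eq:goal}: by \eqref{equation00}, $b_i v_i+\nabla\Phi_i=-\nabla p_i$ on $\spt(\rho_i)$, so the first two groups of terms combine, after integrating by parts on each $A_{i,t}$ (the $\partial\Om$-part again drops since $\xi$ has vanishing normal component there), into $-\sum_i\int_{A_{i,t}}\nabla\cdot(p_i\xi)\dd x=-\int_{\Gamma_t}[p]\,(\xi\cdot n)\dd\sH^{d-1}$ with $[p]=p_1-p_2$. Since $\rho_1+\rho_2=1$ forces $|D\rho_1|=|D\rho_2|=\sH^{d-1}\mres\Gamma_t$ and $D\rho_1/|D\rho_1|$ to be $\pm n$, the $\nabla\cdot\xi$ contribution of the surface-tension term cancels against the singular part $\sigma\dd\mu$ of $p$ inside the term $-p\,\nabla\cdot\xi$, so that what survives in \eqref{eq:goal} at a fixed time is $-\int_{\Gamma_t}[p]\,(\xi\cdot n)\dd\sH^{d-1}$ plus a constant multiple of $\int_{\Gamma_t} n\otimes n:\nabla\xi\dd\sH^{d-1}$. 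Substituting the Young--Laplace relation $[p]=\frac{\sigma}{2}\kappa$ from the fourth line of \eqref{eq:muskat_repeated} and applying the tangential divergence theorem on the $C^2$ surface $\Gamma_t$,
\[ \int_{\Gamma_t}\kappa\,(\xi\cdot n)\dd\sH^{d-1}=\int_{\Gamma_t}\big(\nabla\cdot\xi-n\otimes n:\nabla\xi\big)\dd\sH^{d-1}-\int_{\partial\Gamma_t}\xi\cdot\tilde n\dd\sH^{d-2}, \]
makes the remaining bulk and surface terms cancel, once one knows that the conormal boundary integral vanishes; and it does, because the fifth line of \eqref{eq:muskat_repeated} gives $\tilde n=n$ (the outward normal of $\partial\Om$) on $\partial\Gamma_t=\partial\Gamma\cap\partial\Om$, while $\xi\cdot n=0$ on $\partial\Om$, hence $\xi\cdot\tilde n\equiv 0$ on $\partial\Gamma_t$.

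The individual computations are elementary, so the real work is the bookkeeping: keeping straight which unit normal orients $\kappa$, which orients $V$, the sign of $D\rho_i/|D\rho_i|$, and the precise constant weighting the surface measure $\mu$ inside $p$, so that every term does cancel. The one conceptually non-trivial point is the last one: the orthogonality condition ``$\Gamma$ meets $\partial\Om$ orthogonally'' is never imposed as a separate test in Definition \ref{def:weak_sol}, yet it is exactly the hypothesis that kills the conormal boundary term produced by the tangential divergence theorem --- which is the substance of Remark \ref{rmk:boundary}.
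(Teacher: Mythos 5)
Your argument follows the same route as the paper's own proof: for \eqref{eq:goal}, integrate $-p\,\nabla\cdot\xi$ by parts on each phase, invoke the Young--Laplace jump $[p]=\tfrac{\sigma}{2}\kappa$, apply the tangential divergence theorem on $\Gamma_t$, and kill the conormal boundary term with the orthogonality condition $\tilde n=n$ together with $\xi\cdot n=0$ on $\partial\Om$; you are in fact somewhat more explicit than the paper on \eqref{eq:continuity} (Reynolds transport and $\Delta(p_i+\Phi_i)=0$), which the paper dispatches as a ``standard weak expression.''

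One caveat that applies to both your write-up and the paper's: you explicitly defer the constant bookkeeping (``a constant multiple of $\int_{\Gamma_t}n\otimes n:\nabla\xi$'') and assert cancellation without checking it. If one actually tracks the factors, the paper's chain of equalities ends at $-\tfrac{\sigma}{2}\int_{\Gamma_t}[\nabla\cdot\xi+\nu\cdot\nabla\xi\,\nu]\dd\sH^{d-1}\dd t$, while the surface-tension term in \eqref{eq:goal}, since $|D\rho_1|+|D\rho_2|=2\,\sH^{d-1}\mres\Gamma_t$ in the two-phase setting, contributes $\sigma\int_{\Gamma_t}[\nu\cdot\nabla\xi\,\nu+\nabla\cdot\xi]\dd\sH^{d-1}\dd t$ --- off by a factor of $2$. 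So there is a normalization inconsistency in the paper's own statement/proof that your ``the bookkeeping makes every term cancel'' glosses over; it would be worth pinning down whether the mismatch sits in the coefficient of $\mu$ in \eqref{equation00}, the $\tfrac{\sigma}{2}$ in \eqref{eq:goal}, or the $\sqrt{2\pi/\e}$ normalization of ${\rm HC}_\e$, since the cancellation you assert does not hold verbatim with the constants as printed.
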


\begin{remark}
Note that our notion of solution requires adding the surface measure $\sigma\dd\mu$ to the classical pressure variable. This singular term appears from the minimizing movement scheme, where it ensures that a vacuum does not form at the interface. In general, we expect that the singular part in the weak pressure variable corresponds to the surface measure in \eqref{equation00}.
\end{remark}

\begin{proof}[Proof of Lemma \ref{lem:consistency}]

\eqref{eq:continuity} is a standard weak expression of \eqref{continuity} with $b_iv_i = -\nabla p_i-\nabla\Phi_i$. Let us write again $\Gamma = \cup_{t>0}(\Gamma_t\times \{t\})$. Then we have  

\begin{align*}
\int_0^T\int_{\Om} \left[\sum_i \rho_i(b_iv_i+\nabla\Phi_i)\cdot \xi - p\nabla\cdot \xi\right]\dd x\dd t &=\int_0^T\int_{\Om} \left[\sum_i \rho_i(b_iv_i+\nabla\Phi_i) \cdot \xi - (p_1\rho_1 + p_2\rho_2) \nabla\cdot \xi\right]\dd x\dd t\\
& - \int_0^T\int_{\Om} \sigma (\nabla\cdot \xi )\dd\mu\\ 
&= \int_0^T \int_{\Gamma_t }(p_1-p_2) \xi \cdot \nu\dd\sH^{d-1}\dd t - \sigma \int_0^T\int_{\Gamma_t}\nabla\cdot \xi \dd\sH^{d-1}\dd t \\
&{-\int_0^T\int_{\partial\Om}(p_1\rho_1+p_2\rho_2)\xi\cdot n\dd\sH^{d-1}}\\ 
&= \frac{\sigma}{2}\int_0^T\int_{\Gamma_t} \kappa \xi \cdot\nu \dd\sH^{d-1}\dd t -\sigma \int_0^T\int_{\Gamma_t}\nabla\cdot \xi \dd\sH^{d-1}\dd t\\
&=-\frac{\sigma}{2}\int_0^T \int_{\Gamma_t} [\nabla\cdot \xi  + \nu\cdot \nabla \xi  \nu] \dd\sH^{d-1}
\end{align*}
Here for the second equality we used integration by parts for the first integral, using the fact that 
$$
\nabla \left(\sum_i p_i\rho_i\right)= \sum_i \nabla p_i \rho_i + (p_1-p_2)\nu\dd \sH^{d-1}\mres{\Gamma_t},
$$
and
for the third equality we used the curvature jump condition at the interface, and the fact that $\xi$ has zero normal component on $\partial\Omega$. To conclude, let us recall that $\kappa$ is oriented to be positive when $\spt(\rho_2)$ is convex at the point. With this orientation, observe that for any $C^1$ vector field $\xi$ we have
$$
\int_{\Gamma_t} \kappa \xi\cdot \nu \dd\sH^{d-1} = \int_{\Gamma_t} [\nabla\cdot \xi  -\nu\cdot \nabla \xi  \nu] \dd\sH^{d-1}-\int_{\partial\Gamma_t\cap\partial\Om}\xi\cdot \tilde{n}\dd\sH^{d-2},
$$
where $\nu = D\rho_1/|D\rho_1|$ is normal to $\Gamma$ toward the support of $\rho_1$, $\tilde{n}$ stands for the co-normal vector (orthogonal to $\partial\Gamma_t$ and tangential to $\Gamma_t$).  Note that the lower dimensional term $\int_{\partial\Gamma_t\cap\partial\Om}\xi\cdot \tilde{n}\dd\sH^{d-2}$ must vanish since we know that the co-normal $\tilde{n}$ coincides with the boundary normal $n$.  Indeed, we can write 
\[
\int_{\partial\Gamma_t\cap\partial\Om}\xi\cdot \tilde{n}\dd\sH^{d-2}=\int_{\partial\Gamma_t\cap\partial\Om}\xi\cdot n\dd\sH^{d-2}=0
\] 
where the final equality follows from the fact that $\xi$ has zero normal component on $\partial\Omega$.

\end{proof}

\subsection{Preliminary estimates}

We present below a compactness result on the piecewise constant interpolations of the density variables $(\rho_1^\t,\rho_2^\t)_{\t>0}$, when the parameter $\e$ is vanishing together with $\t.$ 
\begin{proposition}\label{prop:strong_limit}
Let $(\rho_1^\t,\rho_2^\t)_{\t>0}$ be the piecewise constant interpolations constructed in Section \ref{sec:MM} using the minimizing movements scheme \eqref{eq:mm}. Let moreover $\rho_{1,0},\rho_{2,0}\in\sP(\Om)$ be such that $\rho_{i,0}\in BV(\Om;\{0,1\})$ with $\rho_{1,0}+\rho_{2,0}=1$ a.e. in $\Om$ and in particular $\cE(\rho_{1,0},\rho_{2,0})<+\infty.$

Then, there exists $\rho_1,\rho_2\in L^1([0,T];BV(\Om;\{0,1\}))$ such that $\rho_1+\rho_2=1$ a.e. in $[0,T]\times\Om$ and up to passing to a subsequence, $\rho_i^\t\to\rho_i$ strongly in $L^1([0,T]\times\Om)$ as $\e_\t:=\max\{\t,\e\}\da 0$.
\end{proposition}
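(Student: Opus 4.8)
The plan is to apply an Aubin--Lions--Simon type compactness argument to the family $(\rho_i^\t)_{\t>0}$ in $L^1([0,T]\times\Om)$. The two ingredients are (a) a uniform-in-$\t$ bound on a spatial compactness quantity that controls oscillations in $x$, and (b) a uniform-in-$\t$ equicontinuity estimate in time with respect to a weak metric (here $W_2$). The spatial bound is already essentially available: by Lemma~\ref{lem:estimates}(v), for every unit vector $d\in\R^d$ and every $\d>0$,
\[
\int_0^T\int_{\Om}\abs{\rho_i^\t(t,x+\d d)-\rho_i^\t(t,x)}\dd x\dd t\le CT(\d+\sqrt{\e})\,\HC_\e(\rho_{1,0},\rho_{2,0}),
\]
and since $\rho_{i,0}\in BV(\Om;\{0,1\})$ we have $\HC_\e(\rho_{1,0},\rho_{2,0})\to \frac{\s}{2}\big(\abs{D\rho_{1,0}}(\Om)+\abs{D\rho_{2,0}}(\Om)\big)$ as $\e\to 0$ (the $\Gamma$-limsup/pointwise bound of Esedo\={g}lu--Otto), hence $\HC_\e(\rho_{1,0},\rho_{2,0})$ is bounded uniformly in $\e$. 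Therefore the right-hand side is $\le CT(\d+\sqrt{\e_\t})$, which tends to $0$ as $\d\da 0$ uniformly in $\t$ once $\e_\t\da 0$. This is exactly a uniform modulus of continuity of translations, i.e. precompactness of $\{\rho_i^\t\}$ in $L^1(\Om)$ for a.e. fixed $t$ (in the averaged sense above), which is the spatial half of Aubin--Lions.

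For the temporal half I would invoke Lemma~\ref{lem:estimates}(i): $W_2(\rho_i^\t(t),\rho_i^\t(s))\le C\sqrt{t-s+\t}$, giving uniform (generalized) equicontinuity of $t\mapsto \rho_i^\t(t)$ into $(\sP(\Om),W_2)$ up to the error $\sqrt{\t}$. Since $W_2$ metrizes weak-$\star$ convergence on $\sP(\Om)$ and $\Om$ is bounded, this controls time oscillations in a norm weaker than $L^1(\Om)$ — precisely the setting of the Aubin--Lions--Simon lemma with the triple ``$L^1(\Om)$ (spatial compactness) $\hookrightarrow L^1(\Om) \hookrightarrow$ (W_2-type weak norm)''. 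I would use the version of this argument carried out in \cite[Section~4]{LauOtt} (their Lemma on compactness for the heat-content scheme), adapted by replacing their metric with $W_2$; as the authors note in the introduction, this substitution does not cause essential difficulties because both metrics induce weak-$\star$ convergence and the interpolation inequality one needs (an $L^1$-norm is controlled by a product of the spatial-oscillation quantity and the weak metric, up to lower-order terms) holds verbatim. The conclusion is that $(\rho_i^\t)$ is precompact in $L^1([0,T]\times\Om)$, so along a subsequence $\rho_i^\t\to\rho_i$ strongly in $L^1([0,T]\times\Om)$ and, after a further subsequence, a.e.

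It remains to identify the limit. From a.e.\ convergence and the fact (Proposition~\ref{prop:characteristic}) that each $\rho_i^\t(t,\cdot)$ takes values in $\{0,1\}$, we get $\rho_i(t,x)\in\{0,1\}$ a.e., and passing to the limit in $\rho_1^\t+\rho_2^\t=1$ gives $\rho_1+\rho_2=1$ a.e. in $[0,T]\times\Om$. Finally, to see $\rho_i\in L^1([0,T];BV(\Om;\{0,1\}))$: by lower semicontinuity of total variation under $L^1$ convergence, $\abs{D\rho_i(t,\cdot)}(\Om)\le \liminf \abs{D\rho_i^\t(t,\cdot)}(\Om)$ for a.e.\ $t$, and integrating in $t$ and using Fatou together with the uniform energy bound $\int_0^T\HC_{\e_\t}(\rho_1^\t,\rho_2^\t)\dd t\le \int_0^T\HC_{\e_\t}(\rho_{1,0},\rho_{2,0})\dd t\le CT$ (monotonicity of $\HC_\e$ along the scheme, plus the uniform bound on $\HC_\e(\rho_{1,0},\rho_{2,0})$), together with the comparison $\frac{\s}{2}\abs{D\rho_i^\t}(\Om)\lesssim \HC_{2\e}(\rho_1^\t,\rho_2^\t)+o(1)$ of Lemma~\ref{lem:estimates}(v)-type (letting $\d\to0$ there), yields $\int_0^T \abs{D\rho_i(t,\cdot)}(\Om)\dd t<\infty$. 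The main obstacle is purely technical: making rigorous the interpolation-type inequality bridging the spatial-oscillation functional and $W_2$ in the Aubin--Lions step, i.e. transcribing the argument of \cite{LauOtt} with $W_2$ in place of their metric and checking that the scaling in $\sqrt{\t}$ and $\sqrt{\e_\t}$ is harmless when both go to zero simultaneously — everything else is bookkeeping with the estimates already collected in Lemma~\ref{lem:estimates}.
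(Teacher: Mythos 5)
Your outline captures the right skeleton --- Aubin--Lions-type compactness with the spatial ingredient from Lemma~\ref{lem:estimates}(v) and the temporal ingredient from Lemma~\ref{lem:estimates}(i) in the $W_2$ metric --- and both ingredients and the eventual identification of the limit are the ones the paper actually uses. But there is a genuine gap in the spatial-compactness step, and it is precisely the one the paper explicitly flags and works around.

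You assert that the estimate
$\int_0^T\int_\Om|\rho_i^\t(t,x+\d d)-\rho_i^\t(t,x)|\dd x\dd t\le CT(\d+\sqrt{\e_\t})$
``tends to $0$ as $\d\da 0$ uniformly in $\t$ once $\e_\t\da 0$,'' and read this as a uniform modulus of continuity of translations. That is not what the bound says: for \emph{each fixed} $\t>0$, sending $\d\da 0$ only brings the right-hand side down to $CT\sqrt{\e_\t}>0$, not to zero. Indeed, for fixed $\t$ there is no reason for $\rho_i^\t(t,\cdot)$ to be $BV$ at all --- the minimization \eqref{eq:mm} imposes no spatial regularity beyond $L^\infty$ --- so the translation modulus of a single interpolant genuinely need not vanish from the energy bounds alone. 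Consequently, neither the Fr\'echet--Kolmogorov criterion nor the Rossi--Savar\'e version of Aubin--Lions (which requires $\int_0^T\cF(\rho_{i,t}^\t)\dd t$ uniformly bounded with $\cF(\mu)=|D\mu|(\Om)$, a quantity that is $+\infty$ here) applies to the family $(\rho_i^\t)$ directly. One could try to rescue your argument by the auxiliary observation that, along a \emph{sequence} $\t_n\da 0$, each individual $\rho_i^{\t_n}\in L^1$ has vanishing translation modulus and only finitely many indices have $\sqrt{\e_{\t_n}}$ large, but you do not make this point, and it is not the route the paper takes.

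The paper instead mollifies: it introduces $\ov\rho_i^\t:=G_{\e}\star\rho_i^\t$ (spatial convolution only), which \emph{is} uniformly bounded in $L^1([0,T];BV(\Om))$ by the Laux--Otto type estimate (Claim~1). For this auxiliary family the Rossi--Savar\'e coercive-integrand hypothesis holds, and the temporal equicontinuity is inherited for free because $W_2$ is contractive along the heat semigroup, $W_2(\ov\rho_{i,t}^\t,\ov\rho_{i,s}^\t)\le W_2(\rho_{i,t}^\t,\rho_{i,s}^\t)$ (Claim~2). One then transfers compactness back to the original interpolants via the quantitative estimate $\|\rho_i^\t-\ov\rho_i^\t\|_{L^1([0,T]\times\Om)}\le CT\sqrt{\e}\,\HC_\e(\rho_{1,0},\rho_{2,0})\to 0$ as $\e_\t\da 0$, which uses crucially that the discrete densities are characteristic functions (Claim~3). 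Your write-up skips this mollification step entirely, and this is exactly where the paper says the naive reading of (v) breaks down. The remainder of your argument --- identifying the limit, showing $\rho_1+\rho_2=1$, the $\{0,1\}$-valuedness, and the $L^1_tBV_x$ regularity via lower semicontinuity of total variation together with the energy bound --- is correct and matches the paper's conclusion.
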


\begin{proof} First, let us notice that by the uniform quasi-H\"older estimate from Lemma \ref{lem:estimates}(i), we have that there exists a subsequence of $(\rho_i^\t)_{\t>0}$ (that we do not relabel) and $\rho_i\in{\rm{AC}}^2([0,T];\sP(\Om))$ such that $W_2(\rho_{i,t}^\t,\rho_{i,t})\to 0$ as $\e_\t\da 0$, uniformly in $t.$ We shall work with this subsequence from now on.

\medskip

The rest of the proof relies on a careful adaptation of the Aubin-Lions lemma, developed in \cite{RosSav} and used in similar context for instance in \cite{KimMes,Lab17}. 

Let us notice that in order to use the Aubin-Lions argument, we need to show a tightness condition (cf. \cite[Definition 1.3, Remark 1.5]{RosSav}) for the time-dependent family $(\rho_i^\t)_{\t>0}$.  This is typically done by providing a compact set (of the space of probability measures), where `most' of the sequences $(\rho_i^\t(t))_{\t>0}$ lie.  Note that the estimate in Lemma \ref{lem:estimates}(v) does not provide such a compact set. Indeed, for $\tau>0$, the densities $(\rho_i^\t(t))_{\t>0}$ are not actually BV in space. Inspired by arguments in \cite{EO}, we will work first with an auxiliary sequence $\ov\rho_i^\t:[0,T]\times\Om\to[0,1]$, defined as
$$\ov\rho_i^\t:=G_{\e}\star\rho_i^\t,$$
where we have performed a convolution with the heat kernel $G_\e$ only in the spacial variable. It worth noticing that this `perturbation' of $\rho_i^\t$ is reminiscent to the one obtained via the so-called flow interchange technique introduced in \cite{MatMcCSav}. We have the following properties for this new sequence. 

\medskip

{\it Claim 1.} $(\ov\rho_i^\t)_{\t>0}$ is uniformly bounded (w.r.t. $\e$ and $\t$) in $L^1([0,T]; BV(\Om)).$

{\it Proof of Claim 1.} The uniform $L^\infty$ bounds on $(\rho_i^\t)_{\t>0}$ are also preserved for $(\ov\rho_i^\t)_{\t>0}$. Now, as in the proof of \cite[Lemma 2.4]{LauOtt} we conclude that 
\begin{align}\label{eq:BV-estim}
\int_0^T\int_{\Om}|\nabla \ov\rho_{i,t}^\t|\dd x\dd t\le C T\HC_\e(\rho_{1,0},\rho_{2,0}),
\end{align}
for a uniform constant $C$, which proves our claim.

\medskip

{\it Claim 2.} There exists a subsequence of $(\ov\rho_i^\t)_{\t>0}$ (that we do not relabel) and $\ov\rho_i\in L^1([0,T]\times\Om)$ such that $\ov\rho_i^\t\to\ov\rho_i$ strongly in $L^1([0,T]\times\Om)$ as $\e_\t\da 0.$

{\it Proof of Claim 2.} 
For $t\in(0,T)$ fixed, let us notice that $\ov\rho_{i,\t}^\t$ actually can be seen as the evolution of $\rho_{i,t}^\t$ via the heat flow for time $\e>0$. It is well-known that $W_2$ is contractive along the heat flow, so we have 
\begin{equation}\label{eq:contraction}
W_2(\ov\rho_{i,t}^\t,\ov\rho_{i,s}^\t)\le W_2(\rho_{i,t}^\t,\rho_{i,s}^\t),\ \forall\ 0\le s\le t\le T.
\end{equation}

Now let us set $g:L^1(\Om)\times L^1(\Om)\to[0,+\infty]$ and $\cF:L^1(\Om)\to[0,+\infty]$ defined as
$$g(\mu,\nu):=W_2(\mu,\nu)$$
and 
$$\cF(\mu):=\left\{
\begin{array}{ll}
|D\mu|(\Om), & {\rm{if}}\ \mu\in BV(\Om),\\
+\infty, & {\rm{otherwise}}.
\end{array}
\right.$$
By construction, $\cF$ is convex, l.s.c. in $L^1(\Om)$ and it sublevel sets are compact in $L^1(\Om),$ therefore it defines a normal coercive integrand.

\medskip

From \eqref{eq:BV-estim} and from the definition of $\cF$, we have
$$\sup_{\e_\t}\int_0^T\cF(\ov\rho_{i,t}^\t)\dd t<+\infty.$$
And similarly from Lemma \ref{lem:estimates}(i) and from \eqref{eq:contraction}, we have 
$$\lim_{h\da 0}\sup_{\e_\t}\int_0^{T-h}g(\ov\rho_{i,t+h}^\t,\ov\rho_{i,t}^\t)\dd t<+\infty,$$
therefore the assumptions of \cite[Theorem 2]{RosSav} are fulfilled and one can conclude that there exists $\ov\rho_i\in L^1([0,T]\times\Om)$ and a subsequence of $(\ov\rho_i^\t)_{\t>0}$ (that we do not relabel) which is converging in measure, and in particular pointwise a.e. to $\ov\rho_i$ as $\e_\t\da 0$. The strong convergence in $L^1([0,T]\times\Om)$ follows from Lebesgue's dominated convergence theorem, since $(\ov\rho_i^\t)_{\t>0}$ is uniformly bounded. The claim follows. 

\medskip

{\it Claim 3.} There exists a subsequence of the original sequence $(\rho_{i}^\t)_{\t>0}$ which is converging to $\ov\rho_i$ strongly in $L^1([0,T]\times\Om)$ as $\e_\t\da 0$.

{\it Proof of Claim 3.} Passing to the same subsequence in the original sequence $(\rho_{i}^\t)_{\t>0}$ (that we do not relabel) as in the last step of the proof of Claim 2, we have
$$\|\rho_{i}^\t-\ov\rho_i\|_{L^1([0,T]\times\Om)}\le\|\rho_i^\t-\ov\rho_i^\t\|_{L^1([0,T]\times\Om)}+\|\ov\rho_i^\t-\ov\rho_i\|_{L^1([0,T]\times\Om)}\le C\sqrt{\e}+\|\ov\rho_i^\t-\ov\rho_i\|_{L^1([0,T]\times\Om)},$$
for a constant $C>0$ (independent of $\t$ and $\e$) and the claim follows by taking $\e_\t\da 0.$ In the last inequality we have used
\begin{align*}
\|\rho_1^\t-\ov\rho_1^\t\|_{L^1([0,T]\times\Om)}&=\int_0^T\int_\Om|\rho_1^\t-G_\e\star\rho_1^\t|\dd x \dd t\le\int_0^T\int_{\Om} \int_{\RR^d} G(z) |\rho_1^{\t}(x)-\rho_1^{\t}(x-\sqrt{\e} z)|\dd z\dd x\dd t\\
&\le CT\sqrt{\e}\HC_\e(\rho_{1,0},\rho_{2,0}),
\end{align*}
where we relied on the fact that since $\rho^{\t}_1$ and  $\rho^{\t}_2$ take values in $\{0,1\}$, we have $  |\rho_1^{\t}(x)-\rho_1^{\t}(x-\sqrt{\e} z)|\leq \rho^{\t}_1(x)\rho^{\t}_2(x-\sqrt{\e}z)+\rho^{\t}_1(x-\sqrt{\e}z)\rho^{\t}_2(x)$.

\medskip

To conclude, let us notice that since $(\rho_i^\t)_{\t>0}$ converges uniformly w.r.t. $W_2$ to $\rho_i$, $\ov\rho_i$ and $\rho_i$ have to coincide. Also, for this last subsequence, we can pass to the limit as $\e_\t\da 0$ in the estimation of Lemma \ref{lem:estimates}(v) to obtain that actually $\rho_i\in L^1([0,T];BV(\Om;\{0,1\})).$

\end{proof}
Let $\bm{\rho}=(\rho_1,\rho_2)$ the limit point obtained in Proposition \ref{prop:strong_limit}. Later in this section, we will profit from the assumption 
\begin{equation}\label{ass:energy_conv}
\int_0^T\hc_{\epsilon_{\tau}}(\bm{\rho}^{\tau})\dd t \to \int_0^T\cE_s(\bm{\rho})\dd t, \hbox{ as } \e\da 0.\tag{EC}
\end{equation}

\subsection{Derivation of the weak curvature equation for $\epsilon$ going to zero together with $\tau$}
Let $\xi\in C^2([0,T]\times\Om;\R^d)$. Fix $t\in[n\t,(n+1)\t).$ We consider the piecewise constant interpolations $(\rho_i^\t,v_i^\t,p^\t)$. Let us take the inner product of the equations \eqref{eq:pressure-gradient} with $\xi(t,\cdot)$, multiply with the corresponding $\rho_i^\t$ and integrate over $[0,T]\times\Om$. Adding up the two equations we get
\begin{align*}
\int_0^T\int_{\Om}\left[\nabla K_\e\star\rho_2^\t+\nabla\Phi_1+v_1^\t +\nabla p^\t \right]\cdot\xi\rho_1^\t\dd x\dd t+\int_0^T\int_{\Om}\left[\nabla K_\e\star\rho_1^\t+\nabla\Phi_2+v_2^\t  +\nabla p^\t \right]\cdot \xi\rho_2^\t\dd x\dd t=0,
\end{align*}
rearranging the terms yields
\begin{equation}\label{eq:muskat_tau_eps}
\int_0^T\int_{\Om}\xi\cdot \left[(\nabla K_\e\star\rho_2^\t)\rho_1^\t+(\nabla K_\e\star\rho_1^\t)\rho_2^\t\right]+(\rho_1^\t (v_1^\t+\nabla\Phi_1)+\rho_2^\t (v_2^\t+\nabla\Phi_1))\cdot\xi   +\nabla p^\t \cdot \xi\dd x\dd t=0.
\end{equation}

Our aim is now to pass to the limit in this last expression \eqref{eq:muskat_tau_eps} as $\e_\t\da 0$ to recover \eqref{eq:goal}. 

\begin{theorem}\label{thm:limits}
Let $(\rho_i^\t,v_i^\t,p^\t)_{\t>0}$ be the piecewise constant interpolations constructed in \eqref{def:rho-tau}-\eqref{def:vpE-tau}. Then there exists $\rho_i\in L^1([0,T];BV(\Om;\{0,1\}))\cap {\rm{AC}}^2([0,T];\sP(\Om))$, $v_i\in L^2([0,T];L^2_{\rho_i}(\Om;\R^d))$ and $p\in{L^2([0,T]; (C^{0,\a}(\Om))^*)}$ such that, up to passing to a subsequence that we do not relabel, we have the following
\begin{itemize}
\item[(i)] $\rho_i^\t\to\rho_i$, as $\e_\t\da 0$, strongly in $L^1([0,T]\times\Om)$; 
\item[(ii)] $v_i^\t\rho_i^\t\weaklys v_i\rho_i$, as $\e_\t\da 0$, weakly-$\star$ in $\sM^d([0,T]\times\Om)$;
\item[(iii)] $\nabla p^\t\weaklys \nabla p$, as $\e_\t\da 0$, weakly-$\star$ in $L^2([0,T];(C^1(\Om))^\star);$ 
\end{itemize}
\end{theorem}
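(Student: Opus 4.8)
The plan is to extract each of the three convergence claims from estimates already in hand, and to upgrade weak limits to the claimed objects.

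\textbf{Part (i): strong $L^1$ convergence of the densities.} This is essentially already proved. Proposition~\ref{prop:strong_limit} gives, up to a subsequence, $\rho_i^\t\to\rho_i$ strongly in $L^1([0,T]\times\Om)$ as $\e_\t=\max\{\t,\e\}\da 0$, with $\rho_i\in L^1([0,T];BV(\Om;\{0,1\}))$ and $\rho_1+\rho_2=1$ a.e. Moreover Lemma~\ref{lem:estimates}(i) (which survives the joint limit, since the constant there depends only on $\HC_\e(\rho_{1,0},\rho_{2,0})\le\cE_s(\rho_{1,0},\rho_{2,0})$, uniformly bounded under the hypotheses of Theorem~\ref{thm:MAIN}) gives the quasi-Hölder bound $W_2(\rho_{i}^\t(t),\rho_{i}^\t(s))\le C\sqrt{t-s+\t}$; passing to the limit and using lower semicontinuity of $W_2$ identifies the limit as an element of ${\rm{AC}}^2([0,T];\sP(\Om))$, and its pointwise-in-time values coincide with the $L^1$ limit. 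So (i) is just a matter of recalling these statements and noting the subsequence can be chosen jointly with those in (ii)--(iii) by a diagonal argument.

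\textbf{Part (ii): weak-$\star$ convergence of the momenta.} From Lemma~\ref{lem:estimates}(iii), $(E_i^\t)_\t = (v_i^\t\rho_i^\t)_\t$ is uniformly bounded in $\sM^d([0,T]\times\Om)$ (the bound being $\tau$-uniform and, by inspecting the proof via Lemma~\ref{lem:estimates}(ii), also $\e$-uniform up to the energy bound), so along a further subsequence $v_i^\t\rho_i^\t\weaklys m_i$ for some $\R^d$-valued measure $m_i$. The task is to show $m_i = v_i\rho_i$ for some $v_i\in L^2([0,T];L^2_{\rho_i}(\Om;\R^d))$. Here I would use the joint lower semicontinuity of the functional $(\rho,m)\mapsto \int_0^T\int_\Om \frac{|dm/d\rho|^2}{2}\,d\rho$ (the Benamou--Brenier functional) with respect to weak-$\star$ convergence of pairs $(\rho^\t,m^\t)$: since $\rho_i^\t\to\rho_i$ strongly (a fortiori weakly-$\star$) and $\int_0^T\int_\Om |v_i^\t|^2\rho_i^\t\,dx\,dt$ is uniformly bounded by Lemma~\ref{lem:estimates}(ii), the limit pair satisfies $m_i\ll \rho_i\,dx\,dt$ with $L^2_{\rho_i}$ density $v_i$ and the bound $\int_0^T\int_\Om |v_i|^2\,d\rho_i\le\liminf$. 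This is the standard argument from the theory of Wasserstein gradient flows (cf. the construction cited to \cite{MauRouSan1,KimMes}); the only point to check is that strong $L^1$ convergence of the densities is more than enough regularity for the lsc statement.

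\textbf{Part (iii): weak-$\star$ convergence of the pressure gradients.} Lemma~\ref{lem:estimates}(vii) gives that $(\nabla p^\t)_\t$ is uniformly bounded in $L^2([0,T];(C^1(\Om))^*)$ \emph{independently of $\t$ and $\e$}, which is exactly the estimate designed to survive the joint limit. Hence along a subsequence $\nabla p^\t\weaklys \ell$ weakly-$\star$ in $L^2([0,T];(C^1(\Om))^*)$ for some $\ell$. It remains to see that $\ell$ is the distributional gradient of some $p\in L^2([0,T];(C^{0,\a}(\Om))^*)$. For this one uses that $\langle\nabla p^\t(t,\cdot),\nabla\psi\rangle = -\langle\nabla p^\t(t,\cdot)$ acting against gradients is consistent, and more precisely that $\nabla p^\t = \nabla(p^\t - \fint p^\t)$ where the centered pressures, testing against $C^{1,\a}$ functions, inherit a uniform $L^2([0,T];(C^{0,\a}(\Om))^*)$ bound by integrating the defining identity \eqref{eq:pressure-gradient} (the terms $\nabla K_\e\star\rho_i^\t$, though blowing up pointwise like $\e^{-1/2}$, are controlled in the dual-of-$C^1$ sense uniformly, exactly as in the proof of Lemma~\ref{lem:estimates}(vii), and the Kantorovich-potential term contributes the $L^2_{\rho}$-bounded velocity). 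Defining $p$ as the weak-$\star$ limit of the centered pressures identifies $\ell=\nabla p$ in $\sD'((0,T)\times\Om)$.

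\textbf{Main obstacle.} The delicate point is (iii): making sense of the limiting pressure $p$ as a genuine distribution (element of $L^2([0,T];(C^{0,\a}(\Om))^*)$) rather than just having $\nabla p^\t$ converge to some abstract functional. One must produce a $\t$- and $\e$-uniform bound on the centered pressures themselves in a space whose dual pairing is compatible with the test functions used in the weak curvature equation \eqref{eq:goal} --- this is why the space $(C^{0,\a})^*$ appears --- and verify that the limit functional annihilates constants so that it is genuinely a gradient. The compensated-compactness computation in the proof of Lemma~\ref{lem:estimates}(vii), which trades a factor $\e^{-1/2}$ for a difference quotient of $\xi$ and absorbs it into $\HC_{2\e}\le\HC_\e(\rho_{1,0},\rho_{2,0})$, is the crucial ingredient and carries over verbatim; the remaining work is bookkeeping to pass from bounds on $\nabla p^\t$ tested against $C^1$ vector fields to bounds on $p^\t$ tested against $C^{0,\a}$ scalars, using the convexity of $\Om$ and Poincaré--Wirtinger at the level of the $C^{0,\a}$--$C^{1}$ duality. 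Parts (i) and (ii) are then routine given the earlier lemmas.
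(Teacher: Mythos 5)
Your overall outline matches the paper's proof, and parts (i) and (ii) are essentially fine. For (i) both you and the paper simply invoke Proposition \ref{prop:strong_limit}. For (ii) you use joint lower semicontinuity of the Benamou--Brenier action along weak-$\star$ convergent pairs $(\rho_i^\t, E_i^\t)$, whereas the paper instead passes to the limit in the continuity equation (using the geodesic interpolations $\tilde\rho_i^\t,\tilde E_i^\t$ which satisfy it exactly), obtains $\partial_t\rho_i+\nabla\cdot E_i=0$ with $\rho_i\in {\rm AC}^2$, and then invokes the characterization of ${\rm AC}^2$ curves in $(\sP(\Om),W_2)$ to produce the $L^2_{\rho_i}$ velocity. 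These two routes are standard and effectively equivalent, and your version is arguably more direct at identifying the density $v_i$ of the limit momentum.

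Where I would push back is on part (iii). You are right that Lemma \ref{lem:estimates}(vii) gives the $\t$- and $\e$-uniform bound on $(\nabla p^\t)_\t$ in $L^2([0,T];(C^1(\Om))^*)$ and that the remaining task is to bound the centered pressures in $(C^{0,\a}(\Om))^*$ and identify the limit of $\nabla p^\t$ with the distributional gradient of the resulting $p$. But describing this step as ``bookkeeping'' via ``Poincar\'e--Wirtinger at the level of the $C^{0,\a}$--$C^1$ duality'' hides the actual ingredient, which is an elliptic regularity input. Concretely, given $\vphi\in C^{0,\a}(\Om)$ with $\int_\Om\vphi=0$, one solves $-\Delta u=\vphi$ on $\Om$ with homogeneous Neumann boundary conditions and uses the Schauder estimate $\|u\|_{C^{2,\a}}\le C\|\vphi\|_{C^{0,\a}}$ (valid on the smooth convex domain); then
\begin{equation*}
\Big|\int_\Om p^\t(t,\cdot)\,\vphi\,\dd x\Big| = \Big|\int_\Om \nabla p^\t(t,\cdot)\cdot\nabla u\,\dd x\Big| \le \|\nabla p^\t(t,\cdot)\|_{(C^1)^*}\|\nabla u\|_{C^1}\le C\,\|\nabla p^\t(t,\cdot)\|_{(C^1)^*}\,\|\vphi\|_{C^{0,\a}},
\end{equation*}
which, after integrating in $t$, gives the desired $L^2([0,T];(C^{0,\a}(\Om))^*)$ bound on $p^\t-\fint_\Om p^\t$, and the weak-$\star$ limit $p$ then satisfies $\nabla p=\zeta$ in $\sD'((0,T)\times\Om)$. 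Without specifying this Neumann--Schauder mechanism, your argument has a real gap: there is no generic ``dual Poincar\'e'' inequality turning a $(C^1)^*$ bound on $\nabla p^\t$ into a $(C^{0,\a})^*$ bound on $p^\t$; one needs the solvability and regularity of the Neumann problem. Also, a small terminological slip: the condition showing that $\zeta$ is a gradient is that it annihilates divergence-free test fields, not that it annihilates constants; the role of subtracting the mean of $p^\t$ is simply to make the functional well-defined modulo constants so that one can test against mean-zero $\vphi$ only, which is exactly what the Neumann solvability condition requires.
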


\begin{proof}
(i) is a consequence of Proposition \ref{prop:strong_limit} via the Aubin-Lions type argument.

\medskip

(ii) Let us notice that the estimates in Lemma \ref{lem:estimates}(i-iii) are independent of $\e>0$, therefore there exists $E_i\in\sM^d([0,T]\times\Om)$, $i=1,2$ such that $E_i^\t\weaklys E_i$ and $\tilde E_i^\t\weaklys E_i$ as $\e_\t\da 0$. Moreover, we have that $(\rho_i,E_i)$ solves $\partial_t+\nabla\cdot(E_i)=0$ in the sense of distributions and $\rho_i\in {\rm{AC}}^2([0,T];(\sP(\Om),W_2)).$ Therefore, there exists $v_i\in L^2([0,T];L^2_{\rho_i}(\Om;\R^d))$ such that $\partial_t\rho_i+\nabla\cdot(\rho_i v_i)=0$ in the sense of distributions.

\medskip

(iii) Let us notice first that from Lemma \ref{lem:estimates}(vii) we have that the sequence $(\nabla p^\t)_\t$ is uniformly bounded in $L^2([0,T];(C^1(\Om))^*)$, independently of $\e$. Then, the Banach-Bourbaki-Alaouglu theorem yields that it is sequentially pre-compact in that space, so we have that  there exists $\zeta\in L^2([0,T];(C^1(\Om))^*)$ such that up to passing to a subsequence $\nabla p^\t\weaklys\zeta$, as $\e_\t\da 0$. 

Thus, it only remains to show that $\zeta$ is a gradient. Let us notice that by construction of $p^\t$, 
$$\ds\int_0^T\int_{\Om}\nabla p^\t\cdot\xi\dd x\dd t=0,$$ for any incompressible smooth field $\xi$. Therefore, we also have that $\langle\zeta,\xi\rangle=0$ for any incompressible smooth field $\xi$. So we would be done, if one would have a Helmholtz decomposition in this corresponding space. This result is well known (see for instance \cite[Lemma 2.2.1]{Soh}), if $\zeta(t,\cdot)\in W^{-1,q}(\Om)^d$, for some $q\in(1,+\infty)$. However, our limit object has slightly worse regularity. 

\medskip

To overcome this issue, let us argue using the following claim. This is a consequence of classical result in the theory of elliptic equations and Schauder estimates (see for instance \cite[Theorem 5.23-Theorem 5.24]{GiaMar})

{\it Claim.} Let $\vphi\in C^{0,\a}(\Om)$. Then the problem $-\Delta u=\vphi$ (with homogeneous zero Neumann boundary condition, if $\int_\Om\vphi\dd x=0$) has a unique (modulo constants) solution $u\in C^{2,\a}(\Om)$ such that $\|u\|_{C^{2,\a}}\le \|\vphi\|_{C^{0,\a}}.$

\medskip

Now, let $\vphi\in C^{0,\a}(\Om)$ and $u\in C^{2,\a}(\Om)$ as in the claim. Let $t\in[0,T]$. Then we have
\begin{align*}
\Bigg{|}\int_{\Om}p^\t(t,\cdot)\vphi\dd x\Bigg{|}=\Bigg{|}\int_{\Om}p^\t(t,\cdot)\Delta u\dd x\Bigg{|}=\Bigg{|}\int_{\Om}\nabla p^\t(t,\cdot)\cdot\nabla u\dd x\Bigg{|}\le C\|\nabla u\|_{C^1}\le C\|u\|_{C^{2,\a}}\le C\|\vphi\|_{C^{0,\a}},
\end{align*}
where in the first inequality we have used the last uniform estimate from the proof of Lemma \ref{lem:estimates}(vii). This implies that $(p^\t(t,\cdot))_{\t}$ is uniformly bounded in $(C^{0,\a}(\Om))^*$.

To conclude the thesis of the lemma, we observe that (by possibly subtracting the mean) $(p^\t)_\t$ is also converging weakly-$\star$ to some $p$, therefore we will have that $\zeta=\nabla p$ in $\sD'((0,T)\times\Om).$

\end{proof}

To complete the proof of of Theorem \ref{thm:main}, it remains to show that limit of equation (\ref{eq:muskat_tau_eps}) converges to the weak formulation of the Muskat problem.    This result is proven in Proposition \ref{prop:limit_eq}, which in turn uses the results of Lemmas \ref{lem:pointwise_reduction} and \ref{lem:localized_hc_convergence}.  These are direct consequences of the corresponding results from \cite{LauOtt}. However, for completeness and to facilitate the reading, we collected them in the Appendix \ref{sec:appendix} below. These results allow to simply conclude this section with the proof of Theorem \ref{thm:MAIN}.
\begin{proof}[Proof of Theorem \ref{thm:MAIN}]
This proof is a direct consequence of Theorem \ref{prop:strong_limit} and Proposition \ref{prop:limit_eq}. Indeed, these two result allow us to pass to the limit in the equation \eqref{eq:muskat_tau_eps} to obtain \eqref{eq:goal}.
\end{proof}

\section{Numerics and equilibrium shapes}\label{sec:equilibrium} 

In this section we present several examples of the performance of our numerical scheme and a discussion of equilibrium shapes.

\subsection{Numerical implementation}\label{sec:numer}
The Muskat problem evolution can be simulated by discretizing the minimizing movements scheme (\ref{eq:hc_minimizing_movements}) onto a regular grid.    At first glance, numerically solving the discretized variational problem is not so simple. Indeed, Problem (\ref{eq:hc_minimizing_movements}) is not convex with respect to $\rho$ and the Wasserstein distance is challenging to work with numerically.   However, as noted in the introduction, the scheme can be substantially simplified by applying the heat content linearization trick used in \cite{EO}.  To that end, note that the convexity of the heat content gives us the upper bound
\begin{equation}\label{eq:concavity_of_hc}
\hc_{\epsilon}(\bm{\rho})\leq \hc_{\epsilon}(\bm{\rho}^n)+(\delta \hc_{\epsilon}(\bm{\rho}^n), \bm{\rho}-\bm{\rho}^n)
\end{equation}
where the second term is the linearization of the heat content about the previous iterate $\bm{\rho}^n$. 
 Thus, if we replace  (\ref{eq:hc_minimizing_movements}) by the linearized scheme,
 \begin{equation}\label{eq:linearized_scheme}
 \bm{\rho}^{n+1}=\argmin_{\bm{\rho}} \, \hc_{\epsilon}(\bm{\rho}^n)+(\delta \hc{E}_{\epsilon}(\bm{\rho}^n), \bm{\rho}-\bm{\rho}^n)+\cE_p(\bm{\rho})+ \Phi(\bm{\rho})+\sum_{i=1}^2\frac{b_i}{2\tau }W_2^2(\rho_i, \rho^n_i).
 \end{equation}
we obtain a convex variational problem, and inequality (\ref{eq:concavity_of_hc}) ensures that the scheme still posses the energy dissipation property
\[
E_{\epsilon}(\bm{\rho}^{n+1})+\sum_{i=1}^2\frac{b_i}{2\tau}W_2^2(\rho_i^{n+1}, \rho_i^n)\leq E_{\epsilon}(\bm{\rho}^{n}).
\] 
A nearly identical argument to the proof of Proposition \ref{prop:characteristic} shows that the set of minimizers of (\ref{eq:linearized_scheme}) scheme always contains a configuration where $\rho_1$ and $\rho_2$ are characteristic functions.  

To solve problem (\ref{eq:linearized_scheme}) we introduce the pressure as a Lagrange multiplier for the incompressibility constraint, and instead work with the corresponding dual problem.  Up to a constant, the dual problem has the form
\begin{equation}
\label{eq:dual}
p_{n+1}=\argmax_{p} \int_{\Omega} (p+\psi_1^n)^{c_1}(x)\rho_1^n+(p+\psi_2^n)^{c_2}(x)\rho_2^n(x)-p(x)\, dx
\end{equation}
where 
\[
\psi_i^n(x)=\big(\delta \hc_{\epsilon}(\rho_i^n)\big)(x)+\Phi_i(x),
\]
and $c_1$, $c_2$ denote the quadratic $c$-transform
\[
(p+\psi_i^n)^{c_i}(x)=\inf_{y\in\Omega} \; p(y)+\psi_i^n(y)+\frac{b_i}{2\tau}|y-x|^2
\]
 (note $c$-transforms play an essential role in optimal transport see for instance \cite{OTAM}). 
The dual problem is concave with respect to $p$, and can be solved using the recently introduced back-and-forth method \cite{JacLeg}, which efficiently solves optimal transport problems in dual form.  

Due to the two phase nature of the problem, the optimal densities $\rho_i^{n+1}$ are not a simple function of the optimal pressure $p^{n+1}$ (this is in contrast to one-phase incompressible fluid flow where the occupied region is the support of the pressure).  On the other hand, once we have solved for the optimal pressure in (\ref{eq:dual}), we can recover the velocities $v_i$ for each phase (c.f. equation (\ref{eq:muskat_tau_eps})).  Thus, in principle, one can recover the densities $\rho^{n+1}_i$ from $v_i$ and $\rho_i^n$ by solving the continuity equation for time $\tau$.  However, solving the continuity equation accurately is challenging due to the discontinuity of the densities at the phase boundary.  Luckily, since we know that the densities remain as characteristic functions, we can instead compute $\rho_i^{n+1}$ using the level set method \cite{OsherSethian}.   If we let $\vp$ be the signed distance function to the interface between $\rho_1^n$ and $\rho_2^n$, then by solving the transport equation 
\[
\partial_t \vp+\nabla \vp\cdot (v_i\rho_i)=0
\]
for time $\tau$, we can recover $\rho_i^{n+1}$ through the sign of $\vp$.   The advantage of this approach is that the transport equation with Lipschitz initial data can be solved much more accurately than the continuity equation with discontinuous initial data.

\subsection{Numerical Experiments}

We demonstrate the performance of the numerical scheme on 3 different examples in 2 dimensions.  In each experiment, we take our computational domain to be the unit square $[0,1]^2$ and set the surface tension constant to be $\sigma=0.15$. 

In the first two examples, shown in Figures \ref{fig:small_drop} and \ref{fig:large_drop},  we  and choose potentials $\Phi_i(x,y)=-w_iy$ where $w_1=5$ and $w_2=1$.   In Figure \ref{fig:small_drop}, the starting configuration for phase 1 is a small square and in Figure \ref{fig:large_drop}, the starting configuration for phase 1 is a large square.   In both cases, the square becomes round and falls to the bottom of the computational domain.  However, due to the difference in mass between examples 1 and 2, the equilibrium configurations are different.  In Figure \ref{fig:small_drop}, the equilibrium configuration is a half disc sitting at the bottom of the domain, while in Figure \ref{fig:large_drop} the equilibrium configuration is a flat strip.

In the last example, shown in Figure \ref{fig:ripped_drop}, we choose a different potential that leads to a topological change.  We set 
\[
\Phi_1(x,y)=\begin{cases}
\frac{1}{2}-|y-\frac{1}{2}| & \textrm{if} \; \; y>\frac{1}{2}\\
\frac{5}{4}\big(\frac{1}{2}-|y-\frac{1}{2}|\big) & \textrm{if} \; \; y\leq \frac{1}{2}
\end{cases}
\]
and $\Phi_2(x,y)=0$.   The potential encourages phase 1 to migrate to the top and the bottom of the computational domain, with a stronger force attracting the drop to the bottom.  Because the potential pulls the drop in opposite directions, ultimately the initial drop is ripped apart into two separate droplets.   Thanks to the scheme's implicit representation of the interface $\Gamma$, there is no difficulty in simulating topological changes.

\begin{figure}
\centering
\includegraphics[width=0.225 \textwidth]{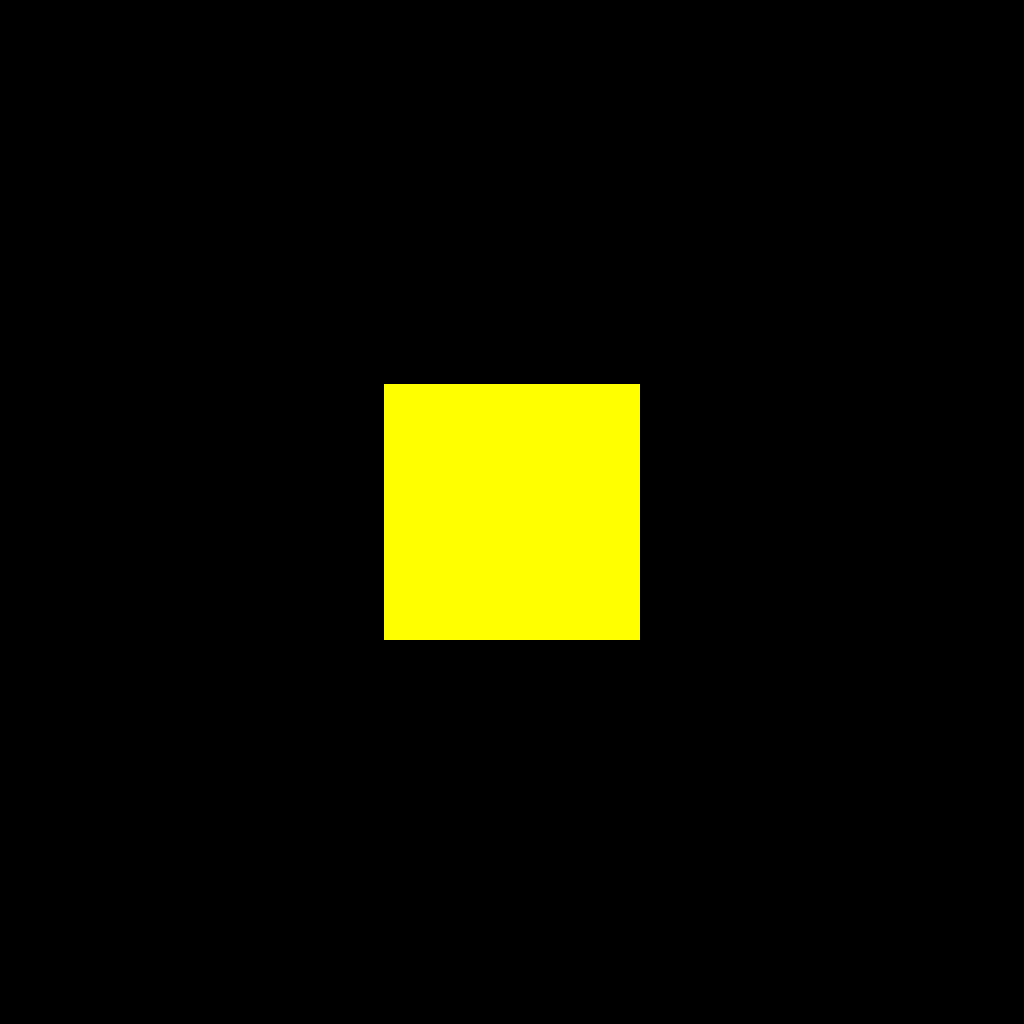}
\includegraphics[width=0.225 \textwidth]{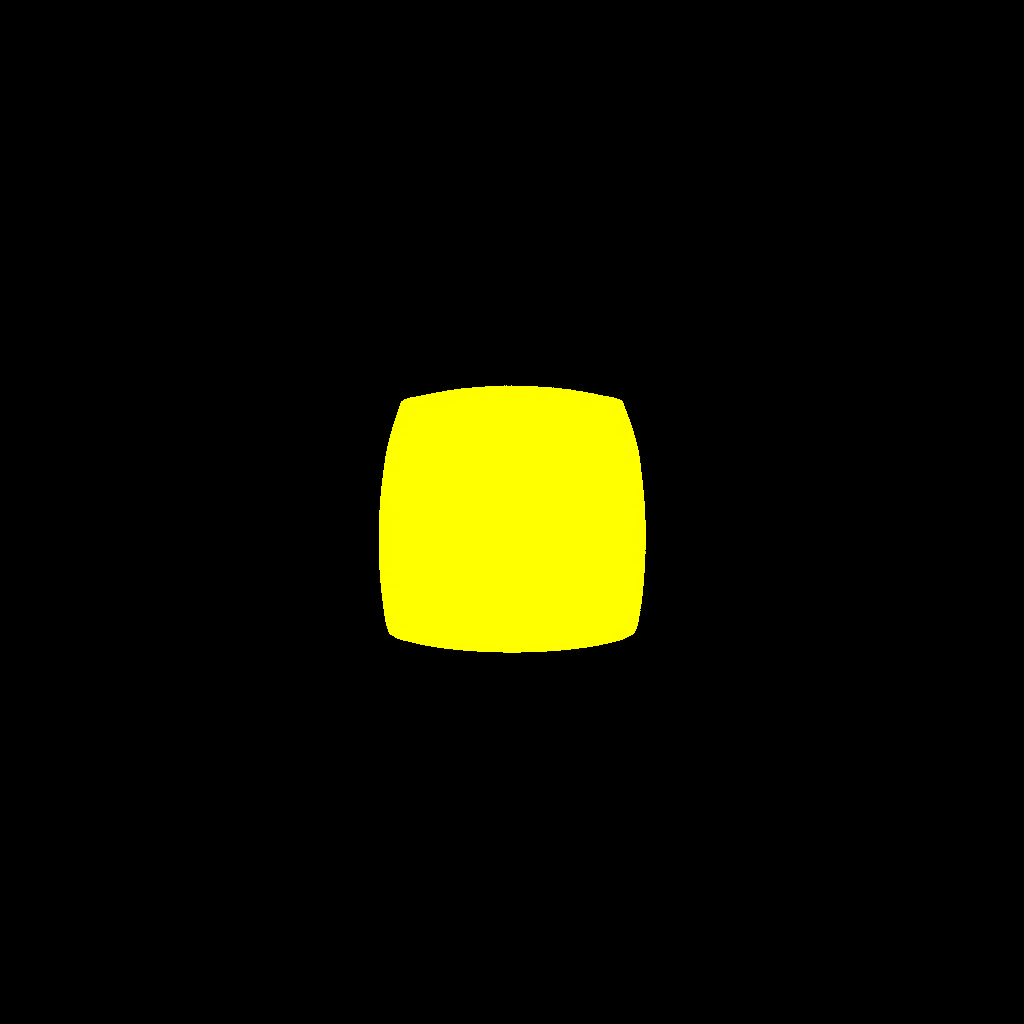}
\includegraphics[width=0.225 \textwidth]{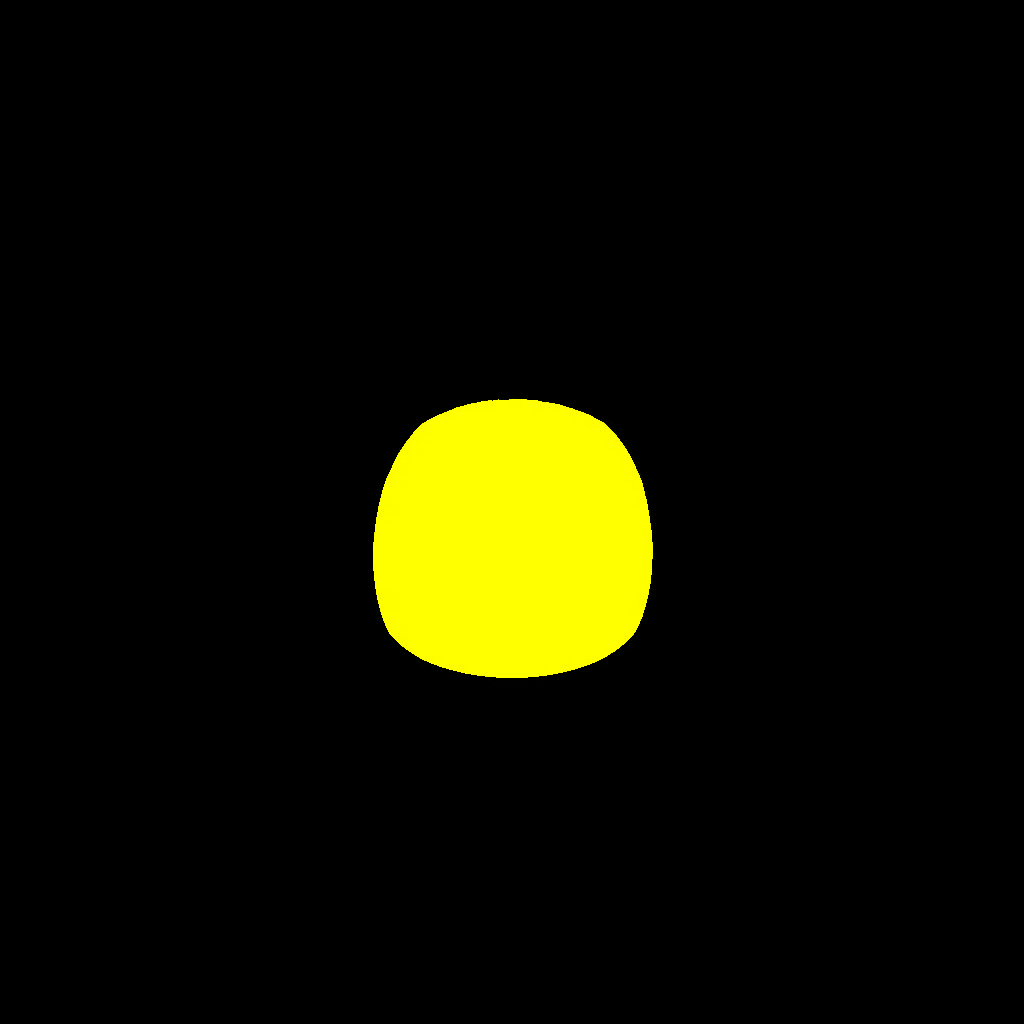}
\includegraphics[width=0.225 \textwidth]{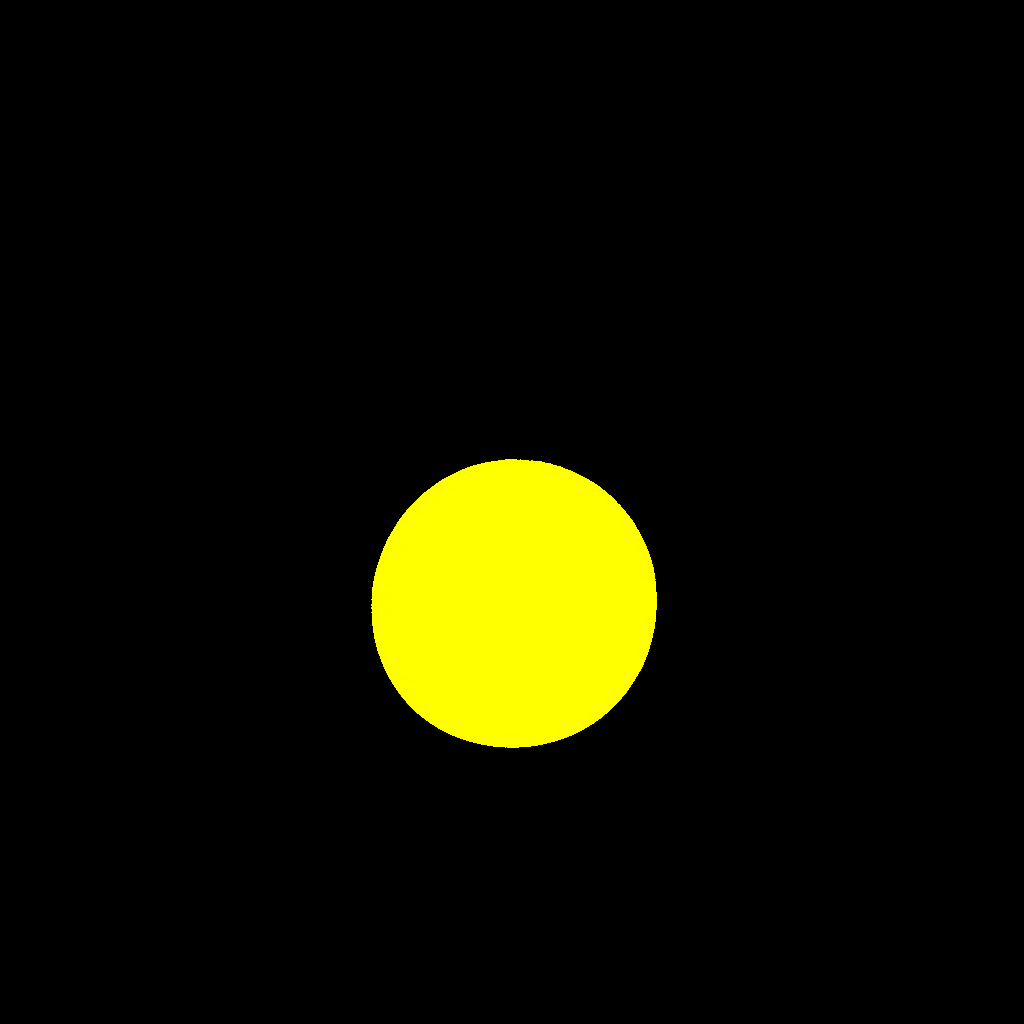}
\includegraphics[width=0.225 \textwidth]{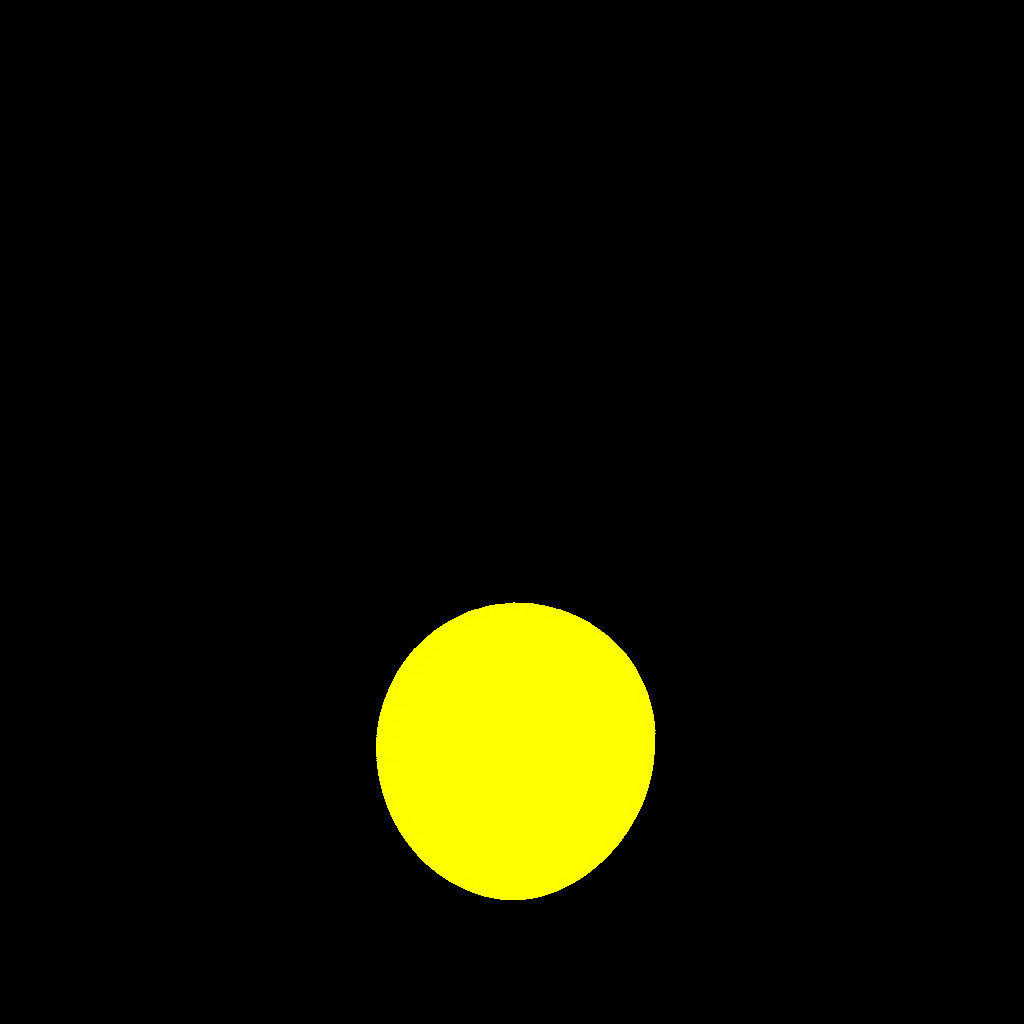}
\includegraphics[width=0.225 \textwidth]{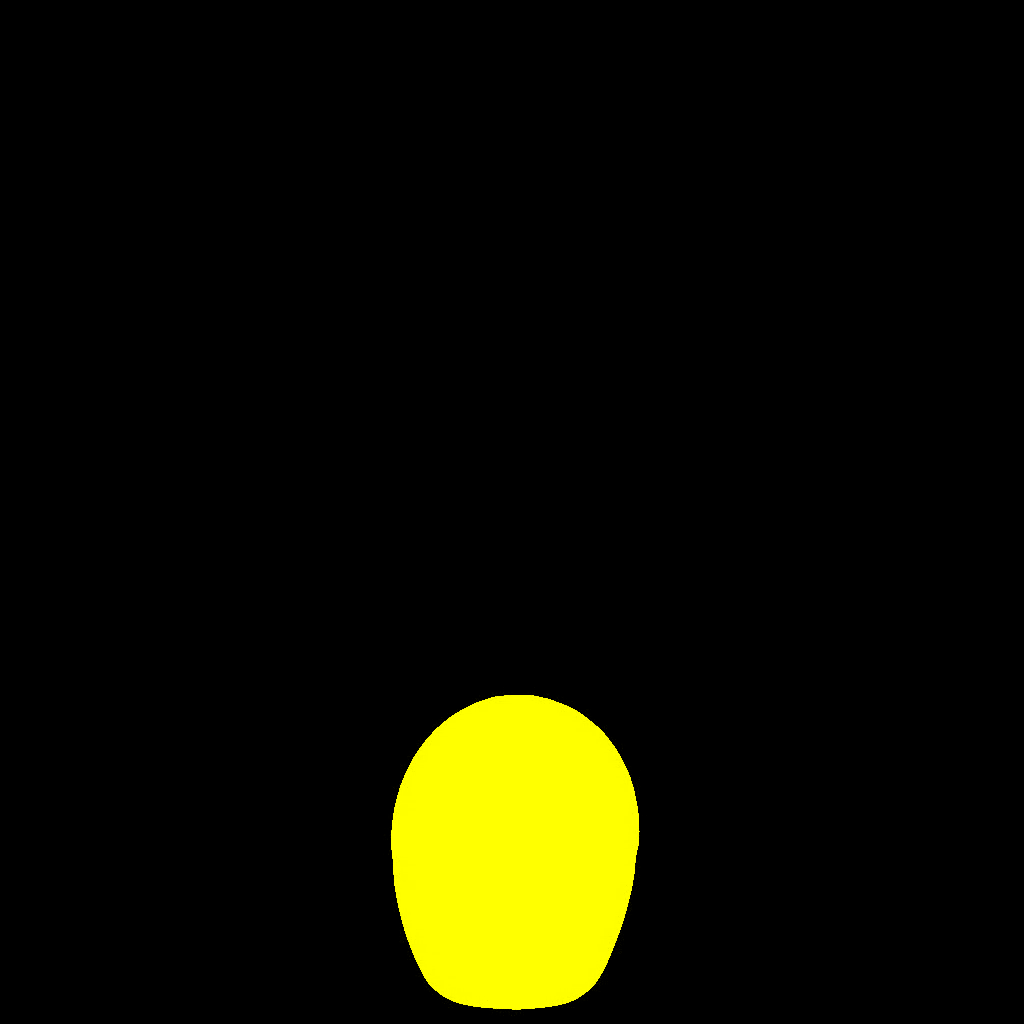}
\includegraphics[width=0.225 \textwidth]{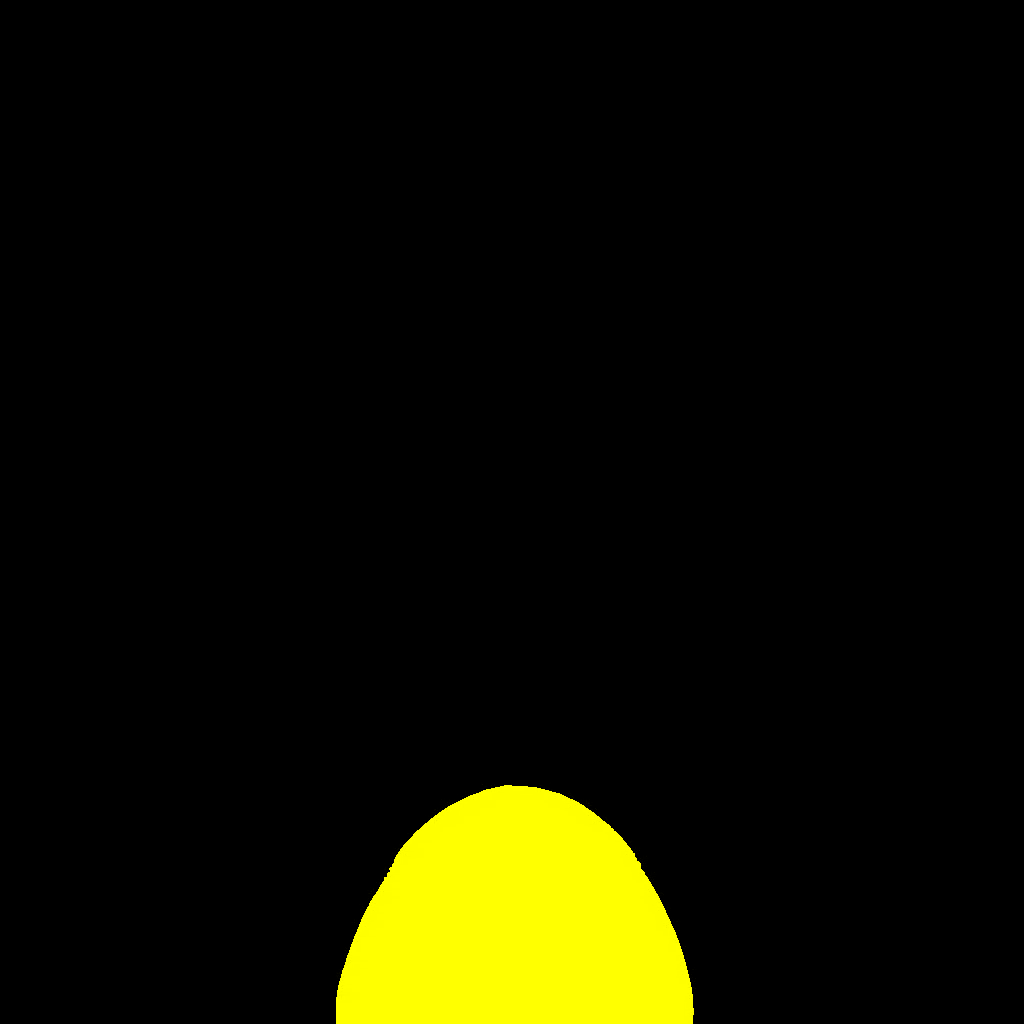}
\includegraphics[width=0.225 \textwidth]{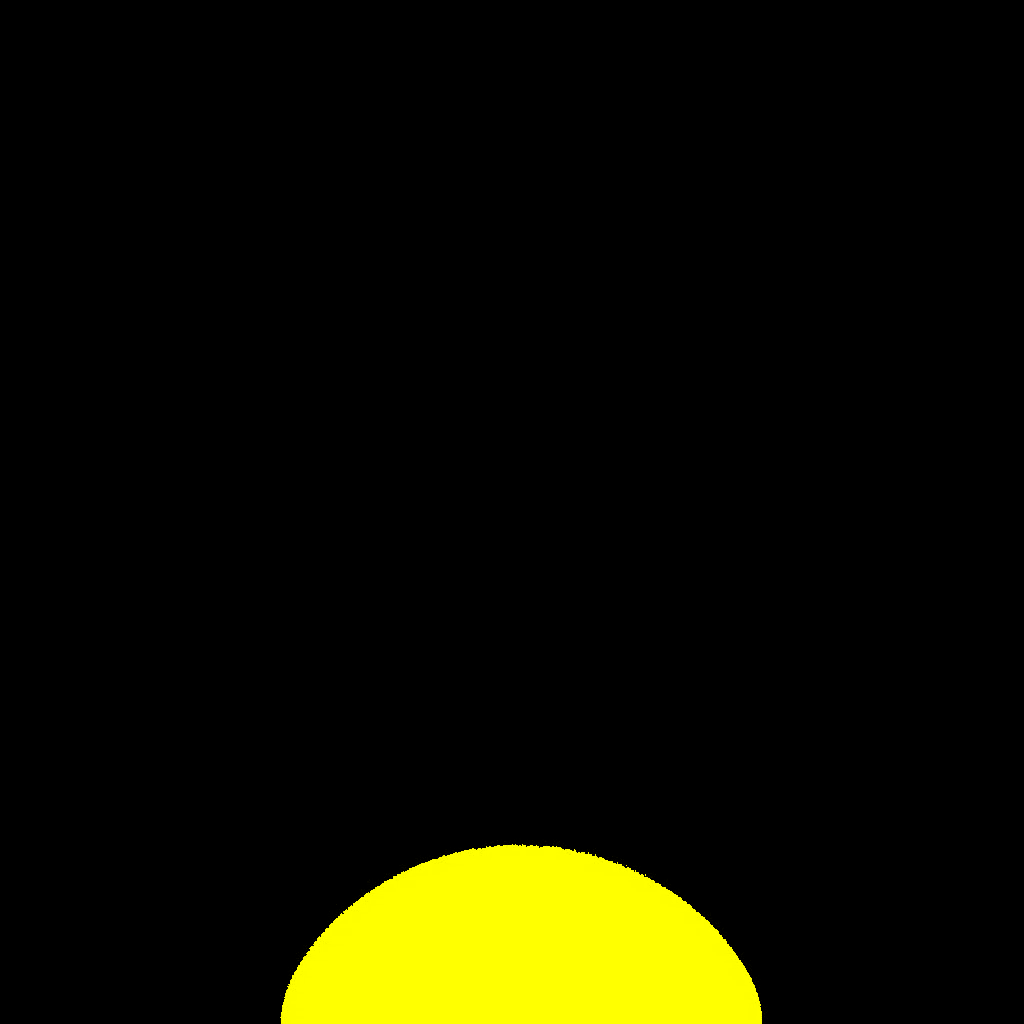}
\caption{ \footnotesize Numerical simulation of the Muskat problem evolution with a gravity potential.   The yellow phase is heavier than the black phase.  Under the influence of gravity and surface tension, the yellow phase becomes round and falls.  The images show snapshops of the evolution at several interesting times.  The final configuration is the stationary state.   \label{fig:small_drop} }
\end{figure}

\begin{figure}
\centering
\includegraphics[width=0.225 \textwidth]{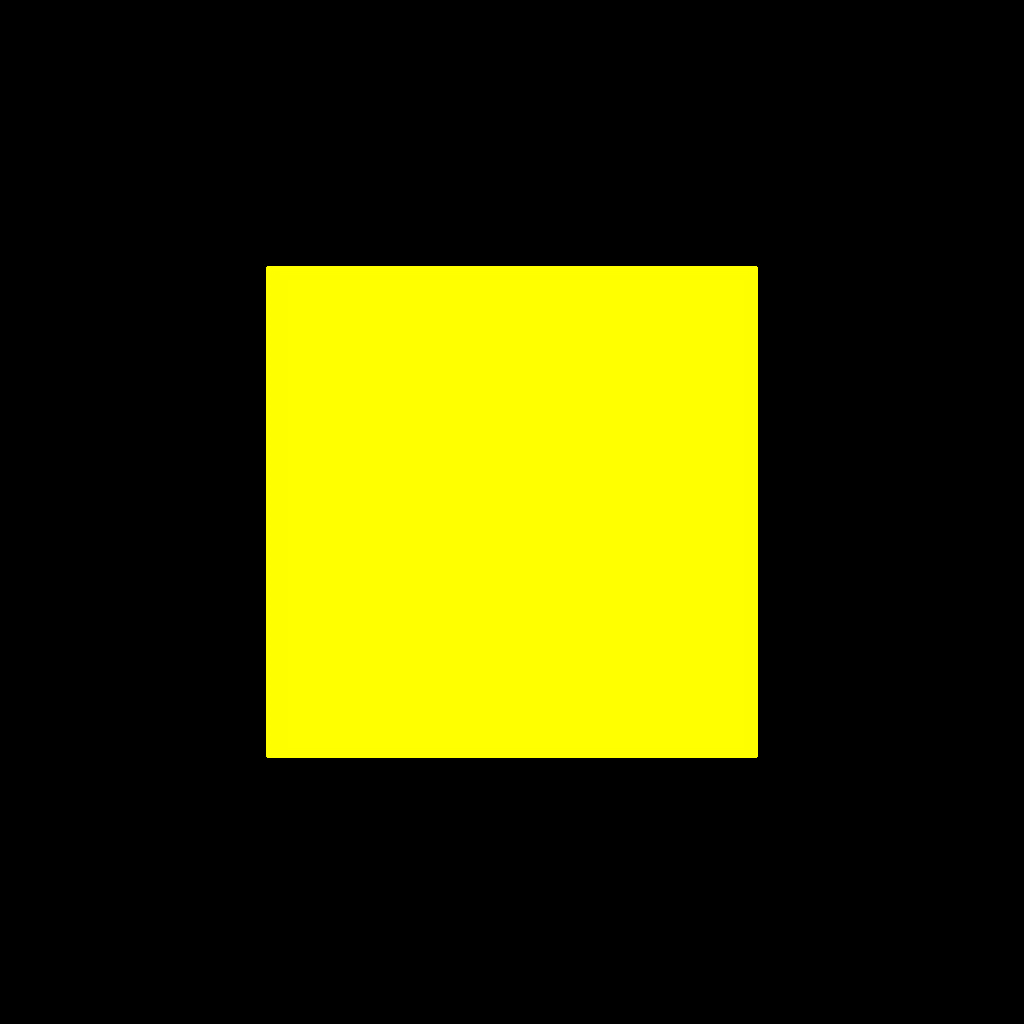}
\includegraphics[width=0.225 \textwidth]{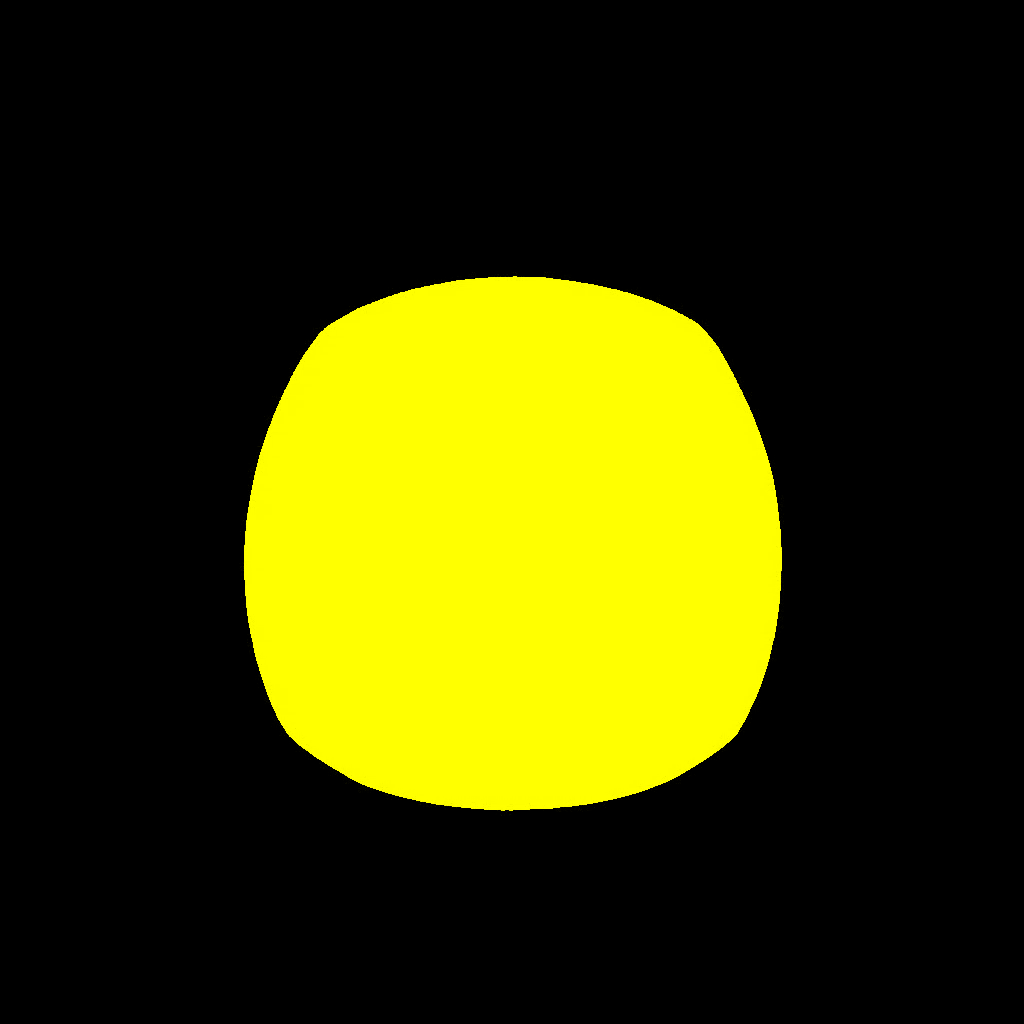}
\includegraphics[width=0.225 \textwidth]{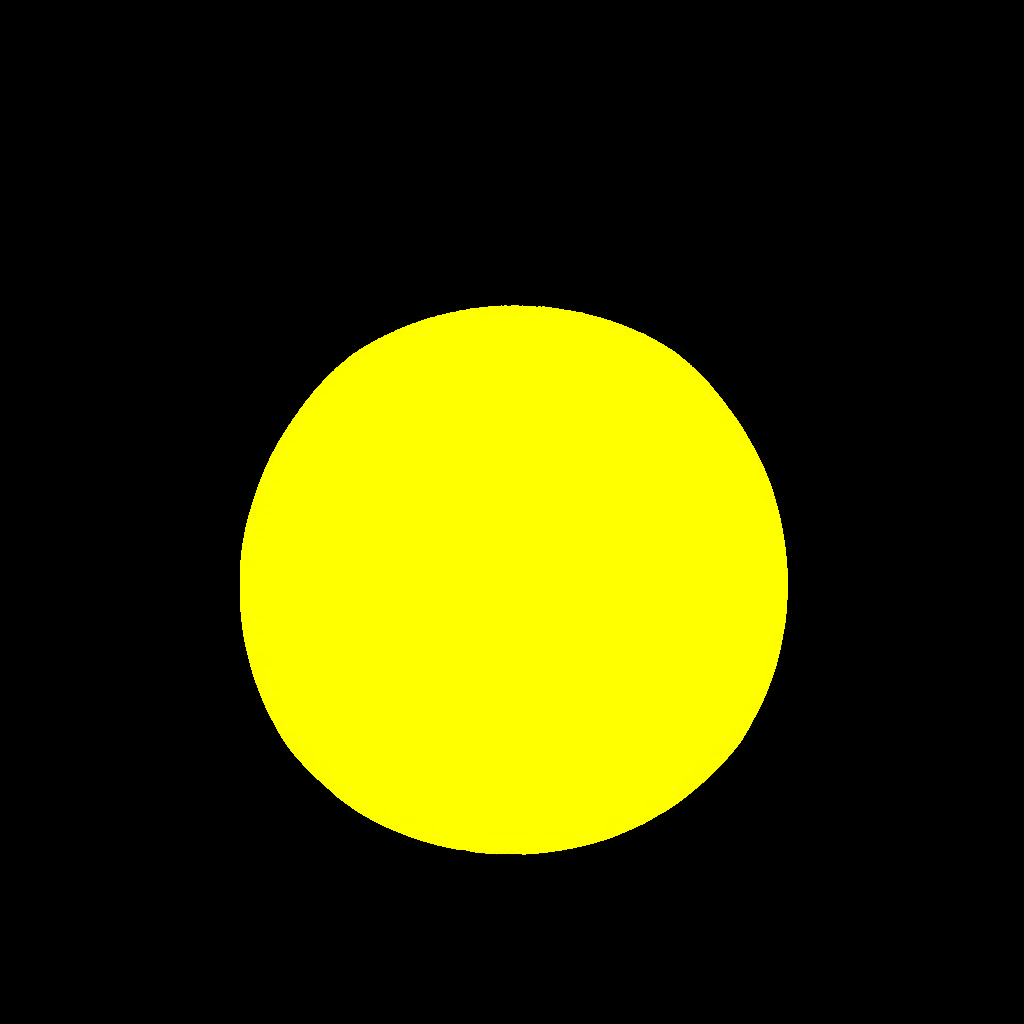}
\includegraphics[width=0.225 \textwidth]{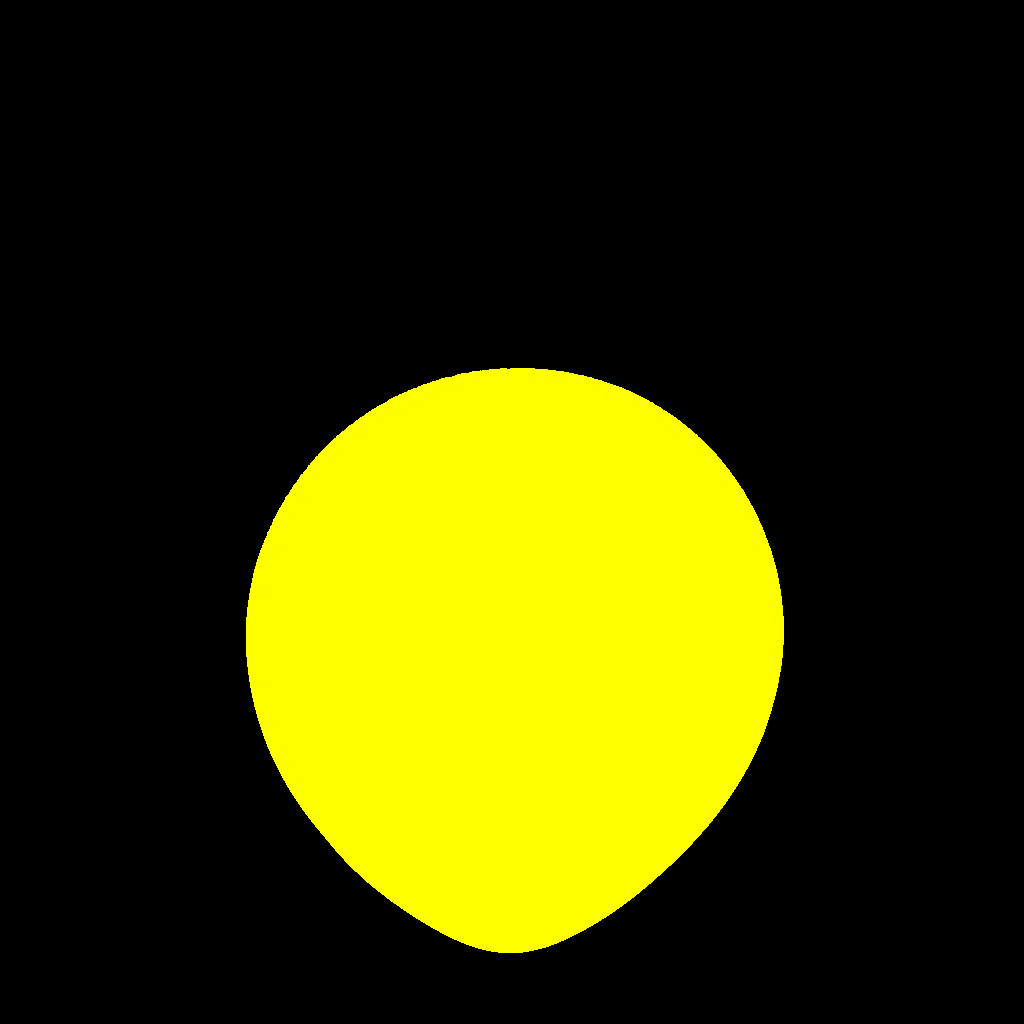}
\includegraphics[width=0.225 \textwidth]{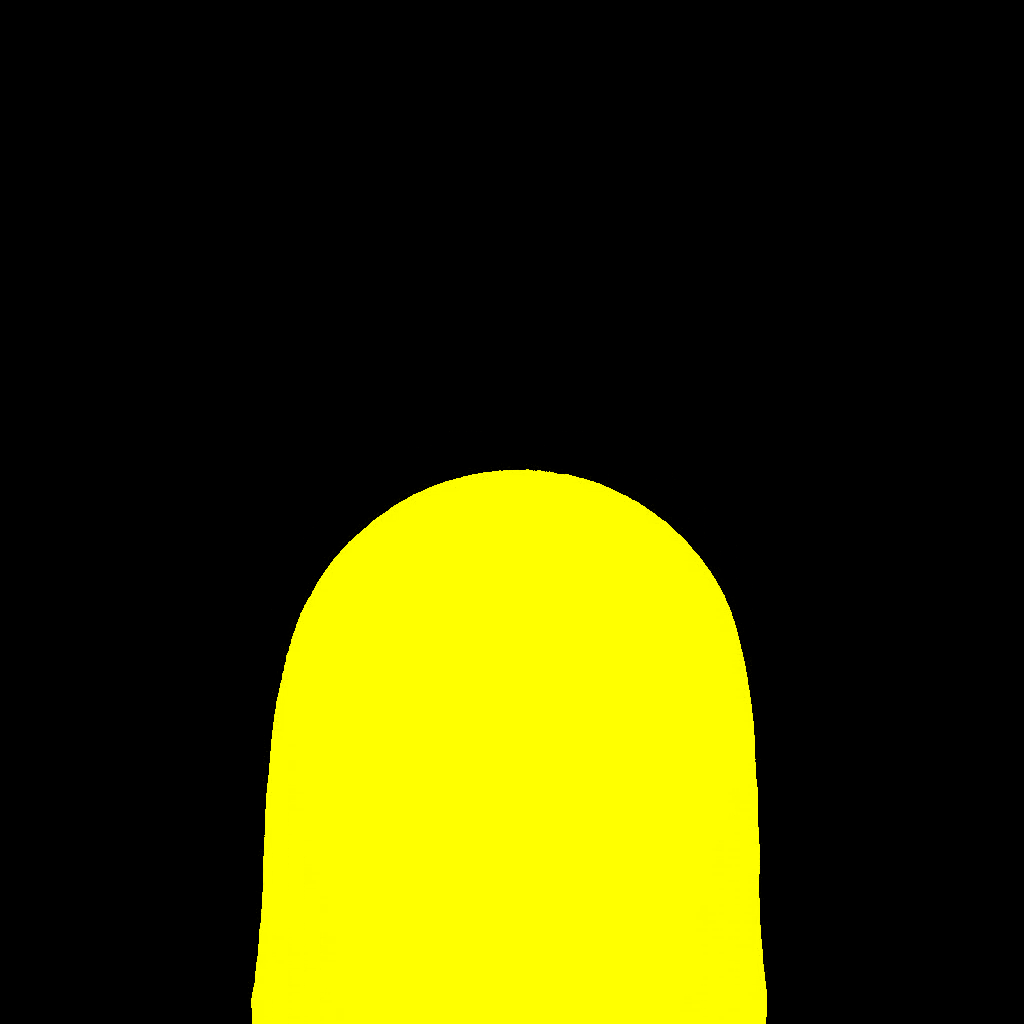}
\includegraphics[width=0.225 \textwidth]{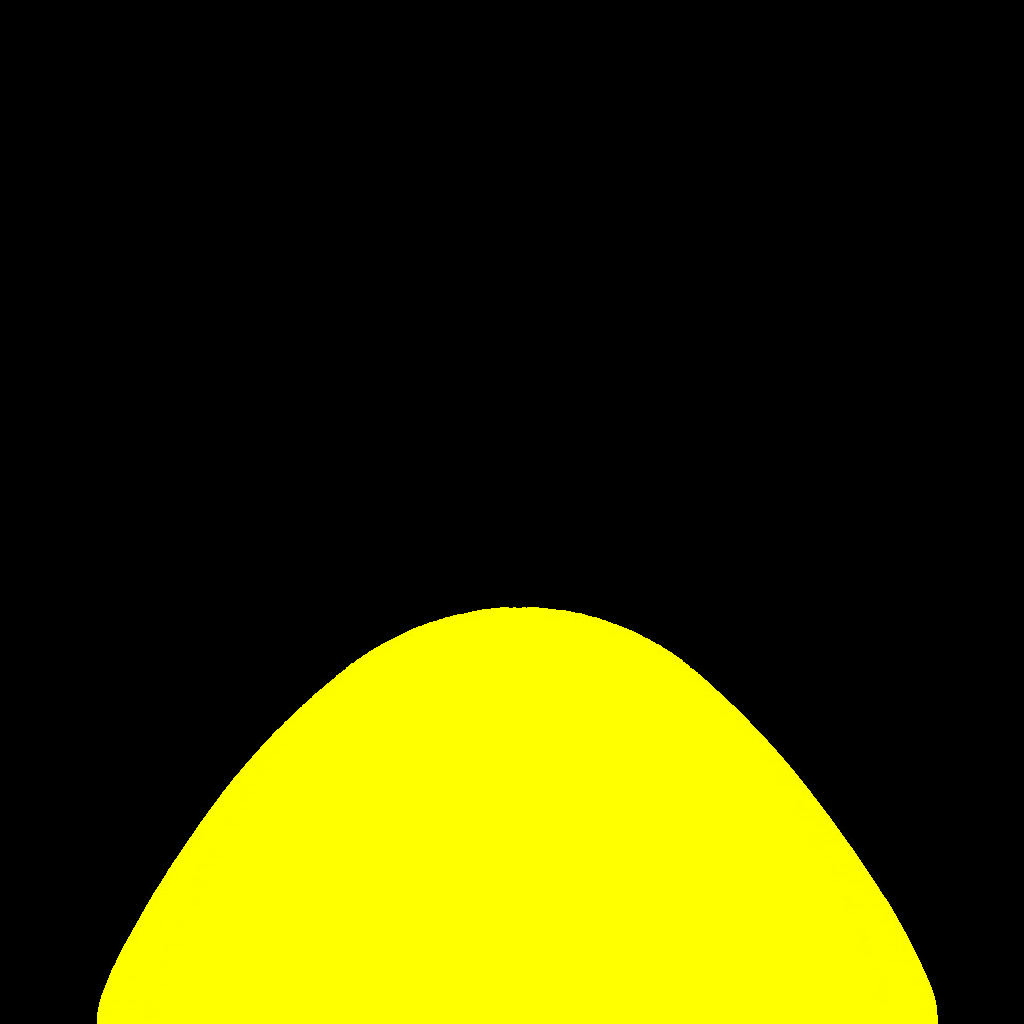}
\includegraphics[width=0.225 \textwidth]{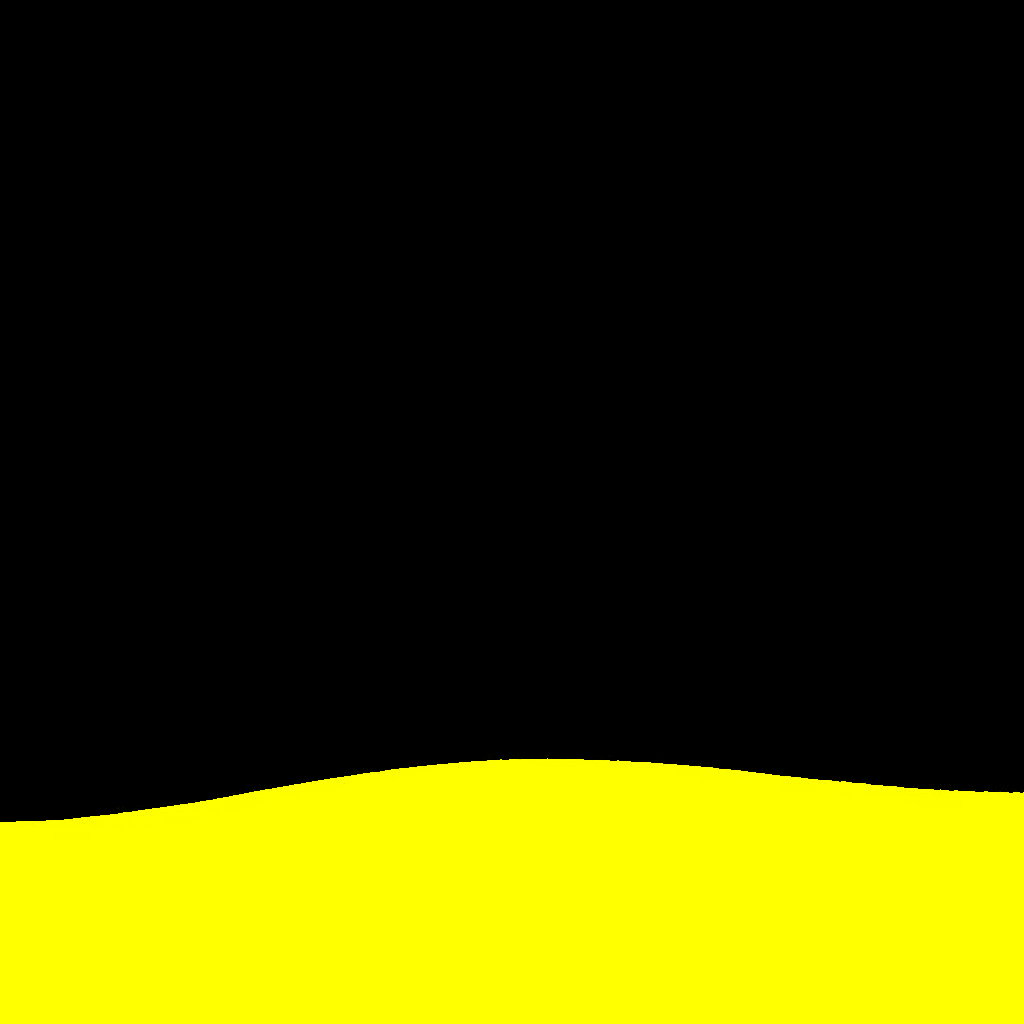}
\includegraphics[width=0.225 \textwidth]{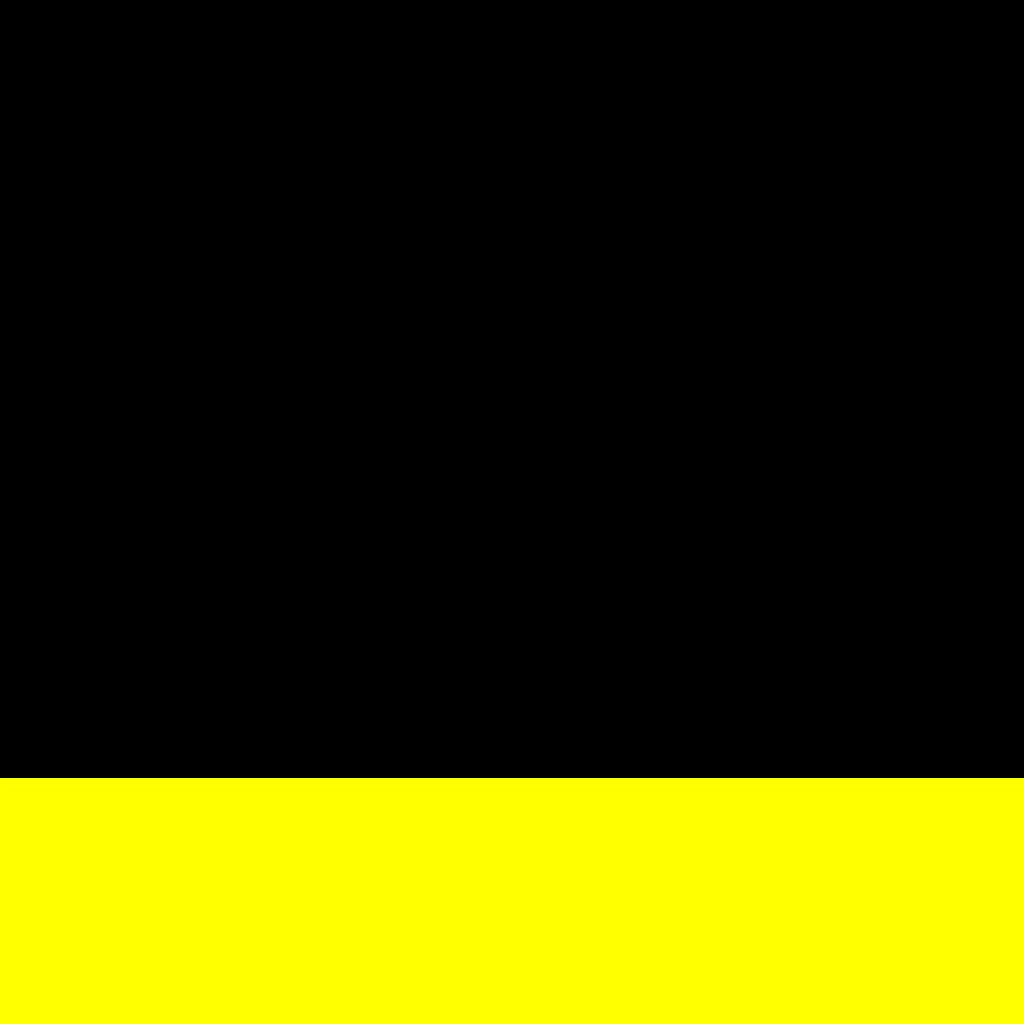}
\caption{\footnotesize Numerical simulation of the Muskat problem evolution with a gravity potential.  The setup is the same as in Figure \ref{fig:small_drop} except that the yellow region has a larger mass.  The images show snapshops of the evolution at several interesting times.  The final configuration is the stationary state, which is qualitatively different from the final configuration in Figure \ref{fig:small_drop} due to the larger mass.    \label{fig:large_drop} }
\end{figure}

\begin{figure}
\centering
\includegraphics[width=0.225 \textwidth]{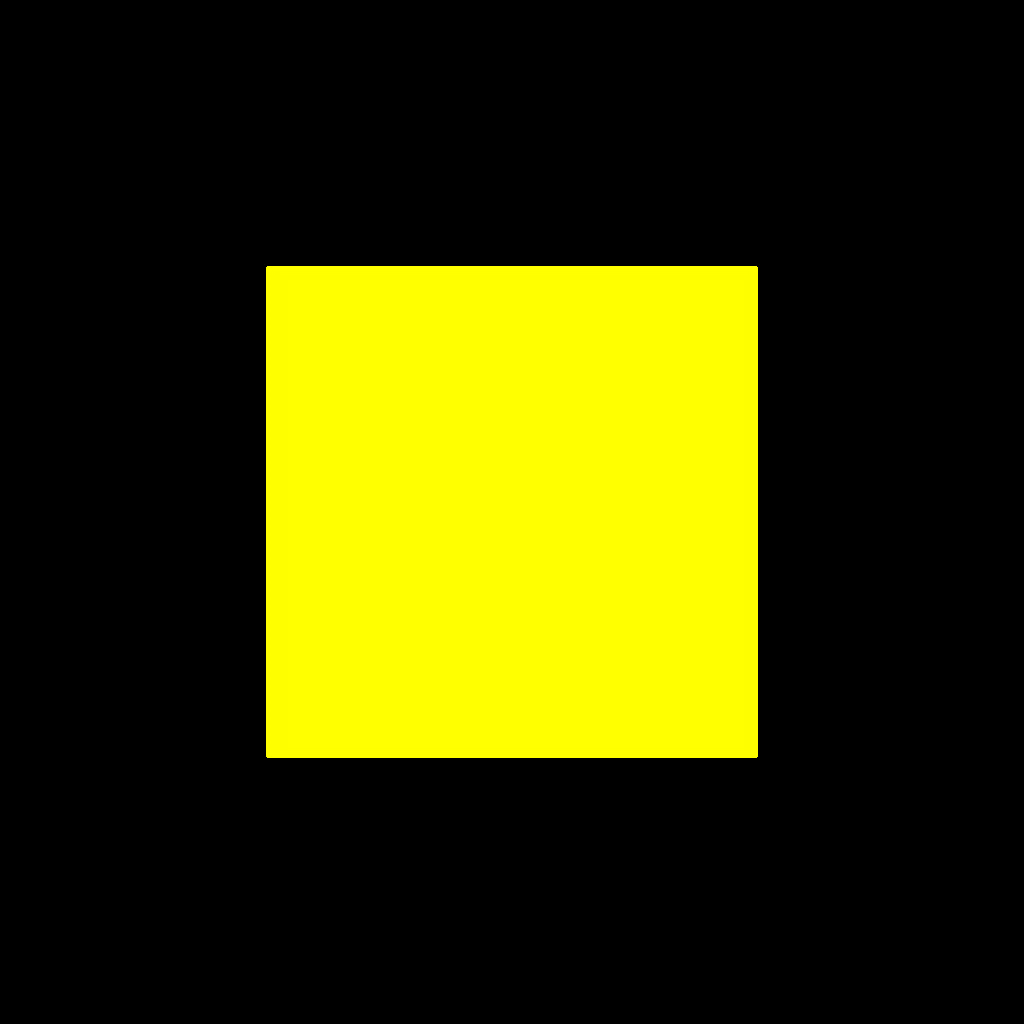}
\includegraphics[width=0.225 \textwidth]{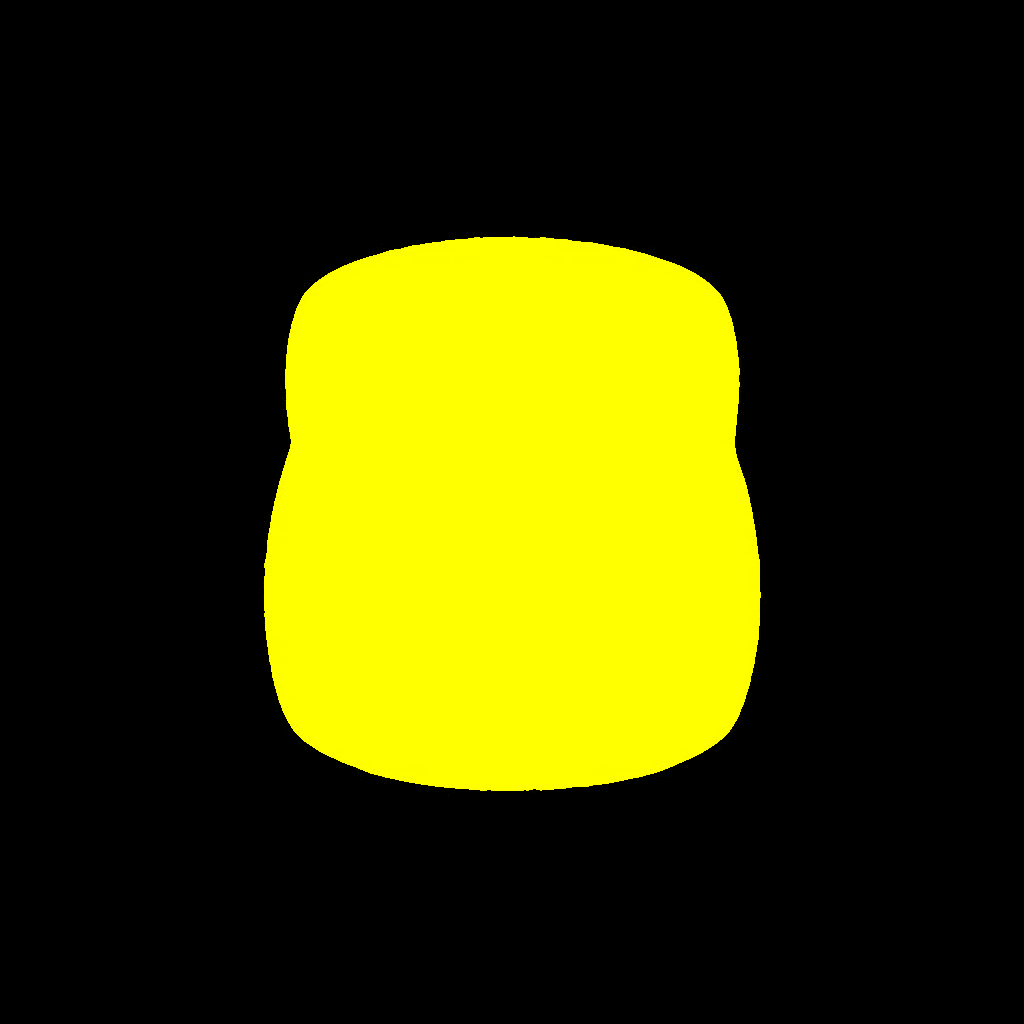}
\includegraphics[width=0.225 \textwidth]{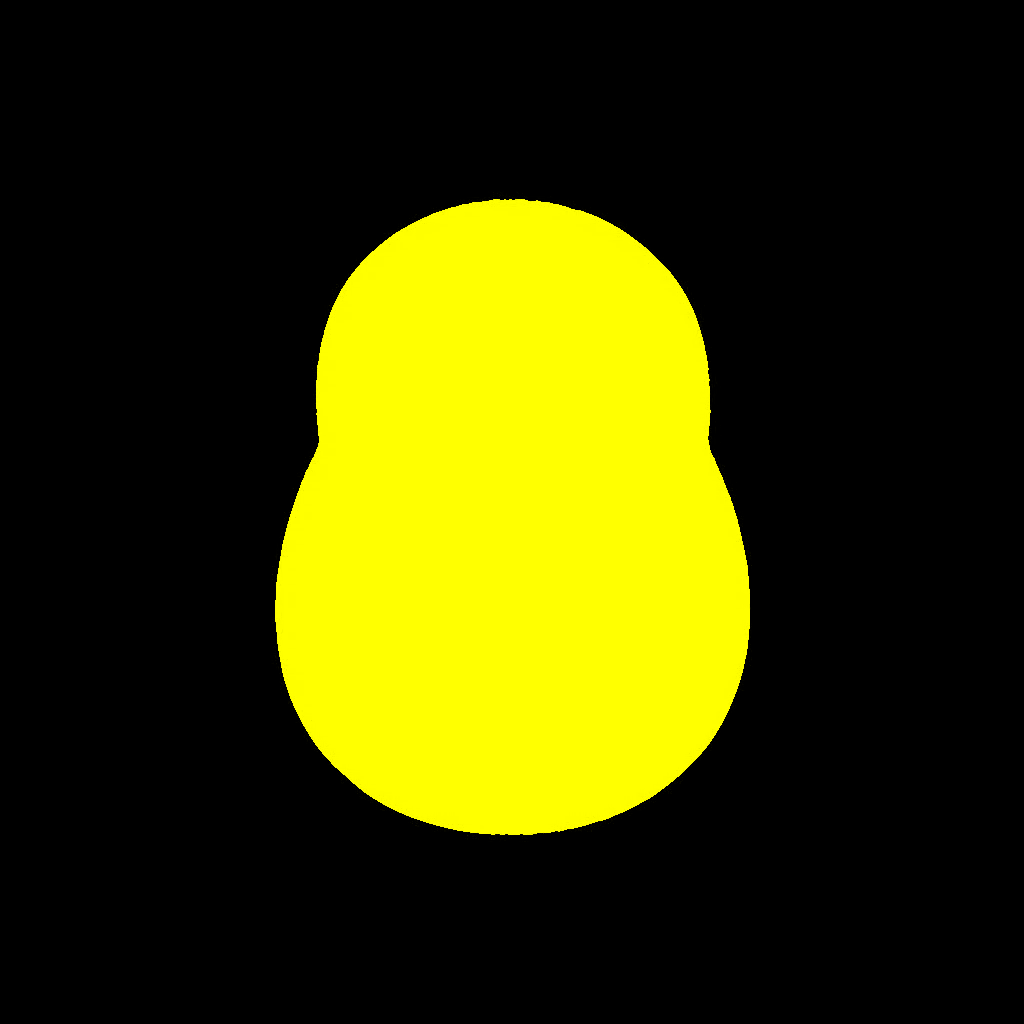}
\includegraphics[width=0.225 \textwidth]{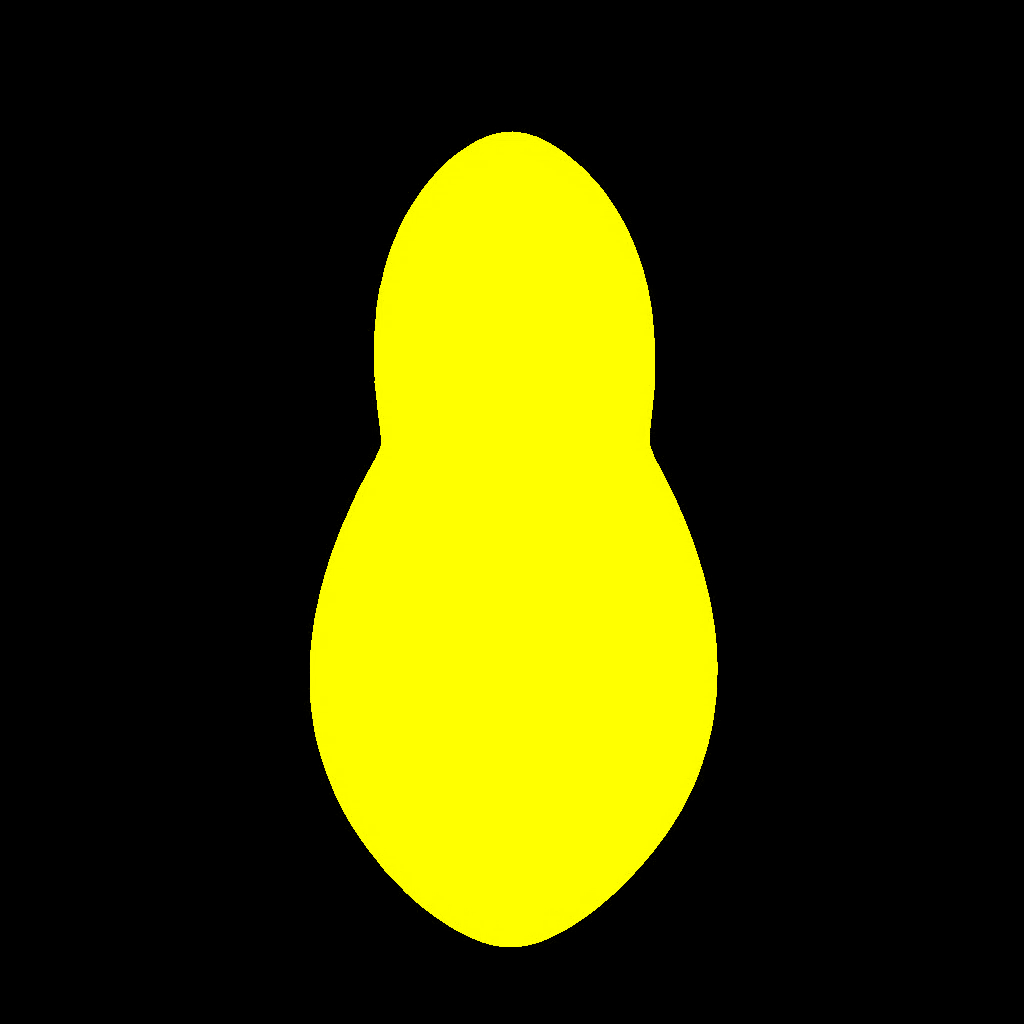}
\includegraphics[width=0.225 \textwidth]{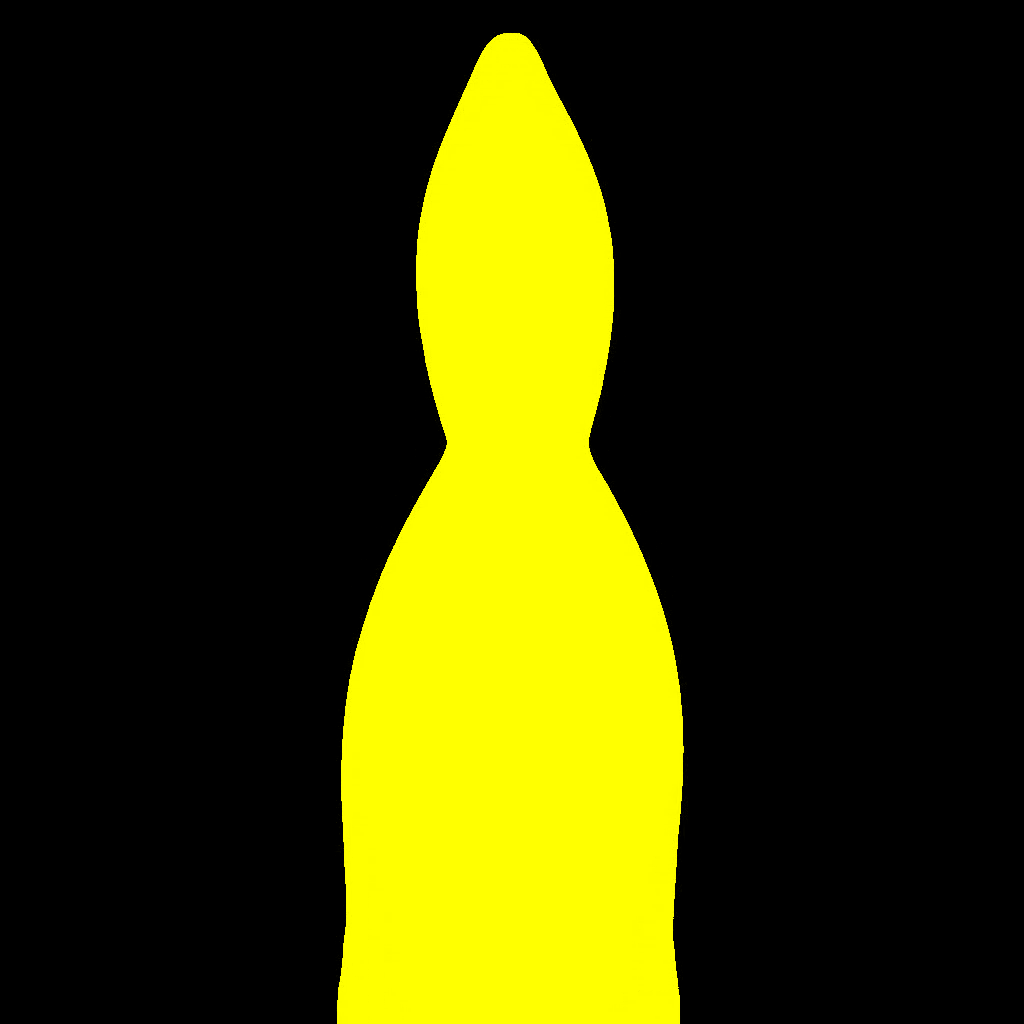}
\includegraphics[width=0.225 \textwidth]{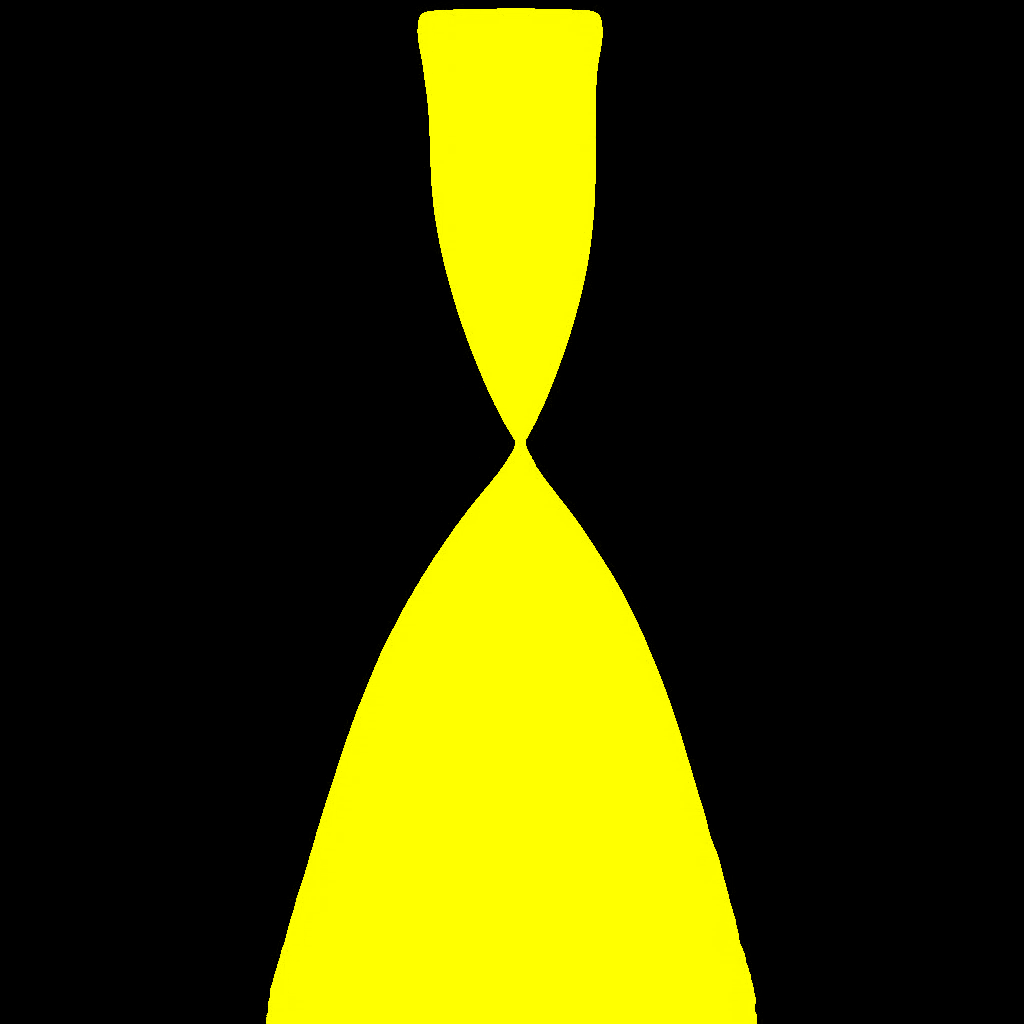}
\includegraphics[width=0.225 \textwidth]{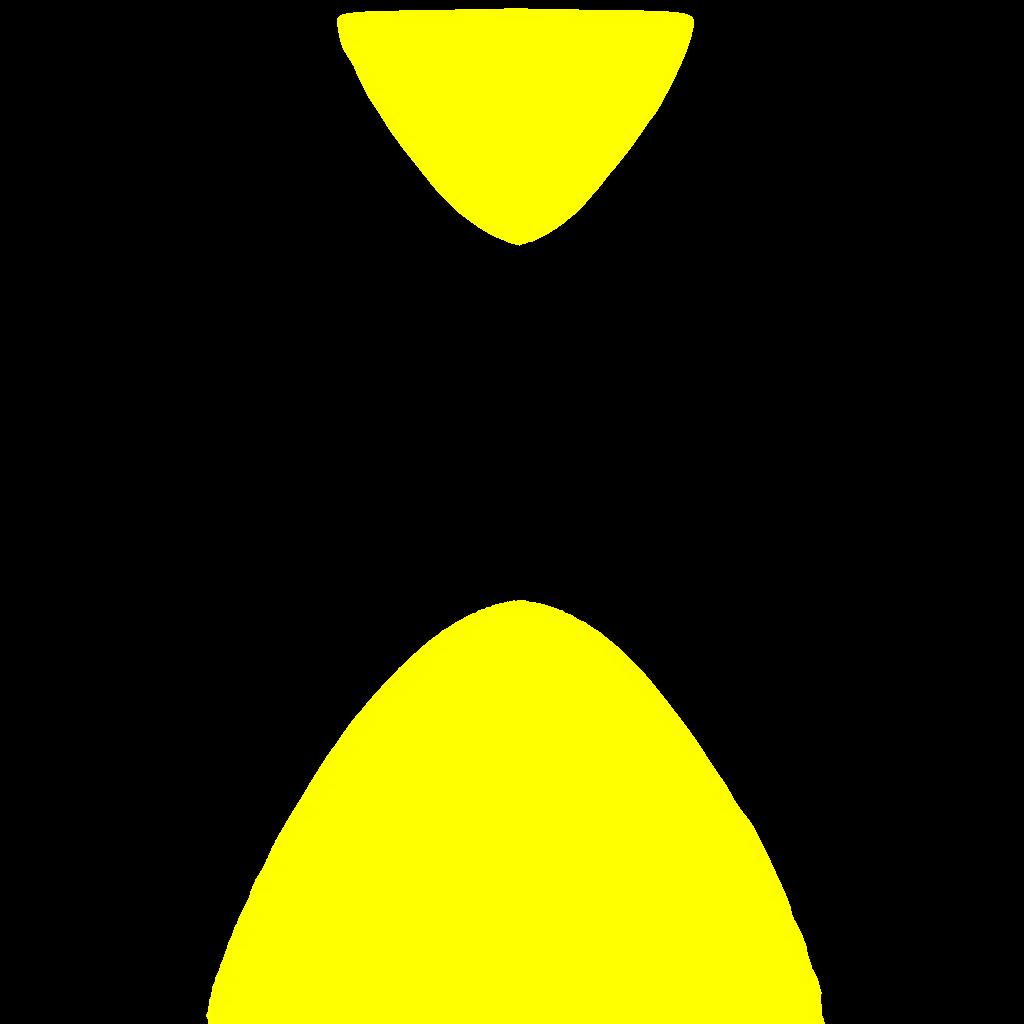}
\includegraphics[width=0.225 \textwidth]{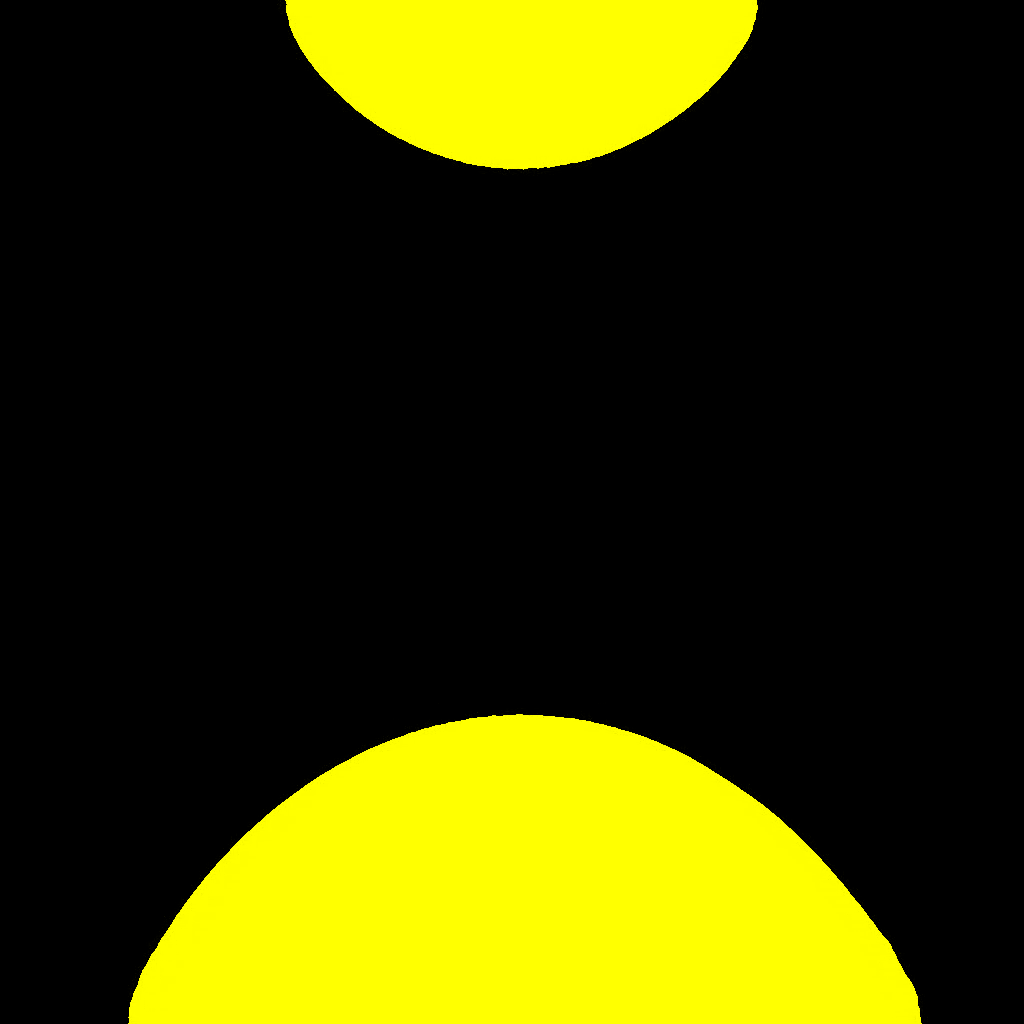}
\caption{\footnotesize Numerical simulation of the Muskat problem evolution with a vertical potential that attracts the yellow phase to the top and the bottom of the computational domain.  The potential is asymmetric and attracts the yellow phase more strongly to the bottom of the domain.   The yellow phase is pulled apart and undergoes a topological change. \label{fig:ripped_drop} }
\end{figure}

\color{black}

\subsection{Discussions on the structure of equilibrium shapes}

In this subsection we discuss global equilibrium of the energy with gravity potentials 
$$
\tilde\cE_\e(\rho_1,\rho_2) := \HC_\e(\rho_1, \rho_2) + \Phi(\rho_1,\rho_2).
$$
where $\Phi_i(x)= c_ix_d$, with $0<c_1<c_2$. The order of the constants denote that $\rho_1$ is the lighter fluid, where the vector $-e_d$ denotes the direction of gravity. The coordinate here is $x = (x', x_d)$ and for simplicity we consider a cylindrical domain, 
$$
\Sigma:= \{|x'|\leq 1\} \times [0,h]=B^{d-1}_1(0)\times[0,h].
$$
Here the convolution is taken with the extension of the density functions as zero outside of the domain,
with the density constraint $\rho_1+ \rho_2 = 1$ in $\Om$ and $\int_{\R^d} \dd\rho_i = M_i$. Since the densities are extended by zero outside $\Om$, this will produce a Neumann boundary condition for the interface $\Gamma$. In particular, we expect that $\Gamma$ intersects $\partial\Om$ orthogonally.

\medskip

Let us mention that away from the global equilibrium, there are diverse possibilities of stationary states for $\tilde\cE_\e$ even with zero potentials. For instance any choice of  characteristic functions $\rho_1$ and $\rho_2$ generating the interface as a disjoint union of spheres, $\{|x-a_i| = r_i\}$, is a stationary solution of the limit energy.   

\medskip

\medskip

 In the limit $\e\to 0$, the $\Gamma$-convergence properties indicate that the global equilibrium of the $\e$-energy converges to the limiting density pair $(\rho_1,\rho_2) = (\chi_{A}, \chi_{A^c})$, which is the global minimizer of the limit energy
$$
E_{\infty}(A):= {\rm{Per}}(A) + \int_A \Psi(x) \dd x, \hbox{ where }  \Psi(x):= (\Phi_1 - \Phi_2)(x). 
$$
under the volume constraint $\sL^d(A) = M$.  Away from the domain boundary, the classical minimal surface theory yields the $C^{2,\alpha}$ regularity of $\partial A$  with the Euler-Lagrange equation
$$
-\kappa -\Psi(x) =  \lambda  \hbox{ on } \partial A,
$$
where $\lambda$ is the Lagrange multiplier associated to the volume constraint and $\kappa$ stands for the curvature.

When $\partial A$ is away from the lateral and bottom portion of the cylinder, this corresponds to the classical pendant liquid droplet problem where the minimizer is known to be rotationally symmetric and convex (see \cite{Gonzalez}). When the droplet boundary touches the cylinder boundary, various shapes of drops are possible  (see Figure \ref{fig:eq-shapes}) and the complete description of possibly non-smooth global minimizers appear to be open. In general, when $\Sigma$ has $C^{1,\alpha}$ boundary, it is shown in \cite{Taylor} that $\partial A$ is $C^{1,\alpha}$ up to the boundary and meets $\partial \Sigma$ orthogonally.  For further discussion of available results we refer to \cite{MagMiha}. An ongoing work on numerical simulations for our flow suggest that the equilibrium states even for the $\e$-energy can be categorized as the ones on Figure \ref{fig:eq-shapes}. In dimension two, we could observe an additional equilibrium shape, as in Figure \ref{fig:eq-shapes-2d}.

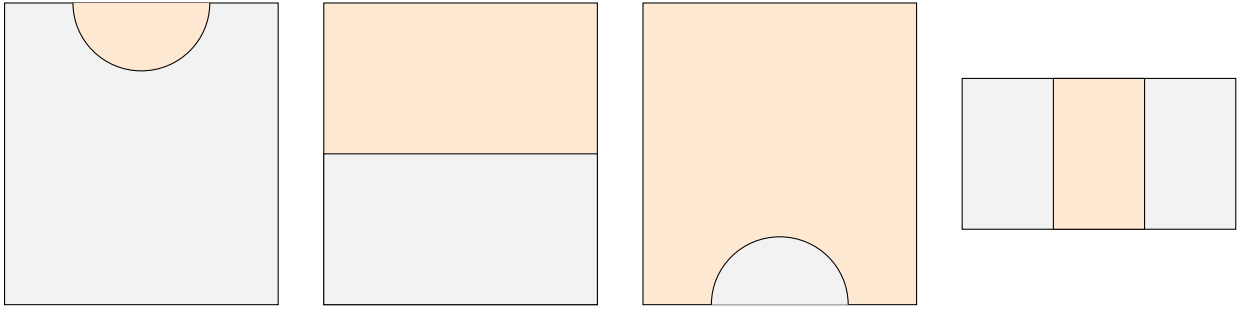
\begin{figure}
\begin{tikzpicture}
\filldraw [fill=gray!10!white, draw=black] (0,0) rectangle (3.6,4);
\filldraw [fill=orange!18!white, draw=black] (0.9,4) arc (180:360:0.9cm);
\filldraw [fill=orange!18!white, draw=black] (3.6+0.6,0) rectangle (4.2+3.6,4);
\filldraw [fill=gray!10!white, draw=black]  (3.6+0.6,0) rectangle (4.2+3.6,2);
\filldraw [fill=orange!18!white, draw=black] (7.8+0.6,0) rectangle (8.4+3.6,4);
\filldraw [fill=gray!10!white, draw=black] (7.8+0.6+2.7,0) arc (0:180:0.9cm);
\filldraw [fill=gray!10!white, draw=black] (12+0.6,1) rectangle (12.6+3.6,3);
\filldraw [fill=orange!18!white, draw=black] (12.6+1.2,1) rectangle (12.6+1.2+1.2,3);

\end{tikzpicture}\caption{Equilibrium shapes in any dimensions, depending on the proportion of the lighter vs. the heavier fluid: orange stands for the lighter fluid, while gray stands for the heavier fluid}
\label{fig:eq-shapes}
\end{figure}

\begin{center}
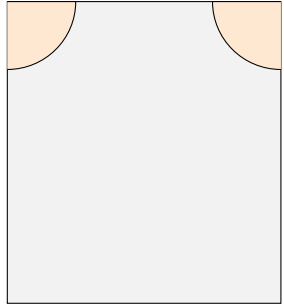
\begin{figure}
\begin{tikzpicture}

\filldraw [fill=gray!10!white, draw=black] (0,0) rectangle (3.6,4);
\filldraw [fill=orange!18!white, draw=black] (0,4) -- +(0:0.9) arc (0:-90:0.9);
\filldraw [fill=orange!18!white, draw=black] (3.6,4) -- +(180:0.9) arc (180:270:0.9);

\end{tikzpicture}\caption{An additional equilibrium shape that we predict only in dimension two}
\label{fig:eq-shapes-2d}
\end{figure}
\end{center}

{\sc Acknowledgement.} We thank Tim Laux for the helpful discussions and references. We thank Hugo Lavenant for his useful remarks, which in particular led us to fix a small issue in the proof of Proposition \ref{prop:characteristic}. We thank the two anonymous referees for their very careful reading of the manuscript. Their large amount of comments and remarks helped us to improve the manuscript significantly. 

I.K. is partially supported by NSF DMS-1566578.  M.J. is partially supported by  Simons Math + X Investigator - 510776, DARPA FA8750-18-2-0066, and NSF ATD-1737770. A.R.M was partially supported by the Air Force under the grant AFOSR MURI FA9550-18-1-0502.

\appendix

\section{Passing to the limit in the weak curvature equation based on \cite{LauOtt}}\label{sec:appendix}

Below we collected the results from \cite{LauOtt} that are used in the proof of Theorem \ref{thm:MAIN}. For two of the results we present their proofs as well, either because there was a minor difference between the result that we need and the one from \cite{LauOtt} or because we have found a shorter proof than the one in \cite{LauOtt}.

Let us note that the energy convergence assumption in Theorem \ref{thm:MAIN} is used only in Lemma \ref{lem:pointwise_reduction}.  The assumption plays a crucial role in the following arguments as it allows us to convert a limit requiring uniform convergence to one which only requires pointwise convergence.  In what follows, it will be useful for notational simplicity to introduce the following definition:
\begin{definition}\label{def:sigma_measure}
For a smooth rapidly decaying kernel $J:\R^d\to\R$ and a vector valued Radon measure $\nu\in\sM^d(\Om)$ we define the Radon measure $\sigma_J(\nu)\in\sM(\Om)$ as
$$\int_{\Om}\psi(x)\dd \sigma_J(\nu)(x):=\int_{\R^d}J(z)\int_{\Om} \psi(x)\Bigg{|}z\cdot \frac{\nu(x)}{|\nu(x)|}\Bigg{|}\dd|\nu|(x)\dd z,\ \forall\psi\in C(\Om).$$
\end{definition}

\begin{proposition}\label{prop:limit_eq}
Let $(\rho_i^\t)_{\t>0}$ be the piecewise constant interpolations constructed in \eqref{def:rho-tau}-\eqref{def:vpE-tau} and let $\rho_i\in L^1([0,T];BV(\Om;\{0,1\}))$, $i=1,2$ be their strong $L^1$ limits. If the assumption \eqref{ass:energy_conv} is fulfilled, then up to passing to a subsequence that we do not relabel, we have
\begin{align}\label{eq:convergence}
\int_0^T\int_{\Om}\xi\cdot [(\nabla K_{\e_\t}\star\rho_2^\t)\rho_1^\t&+(\nabla K_{\e_\t}\star\rho_1^\t)\rho_2^\t]\dd x\dd t\to \sum_i\int_0^T\int_{\Om}\frac{1}{4}\sum_{k}\zeta_k(x)\dd\sigma_{J}(D\rho_i)(x)\dd t\\
\nonumber&=\int_0^T\int_{\Om}\frac{\sigma}{2}\left( \frac{D\rho_1}{|D\rho_1|}\otimes \frac{D\rho_1}{|D\rho_1|} : \nabla \xi  + \nabla \cdot \xi \right) \left(|D \rho_1| +|D \rho_2|  \right)\dd x\dd t
\end{align}
${\rm{as\ }}\e_\t\da 0$, for any $\xi\in C^3([0,T]\times\Om;\R^d)$. Here, we denoted $J(z):=\sigma\sqrt{2\pi}|z\cdot e_1|^2G(z)$ and we used the decomposition $\nabla\xi^{{\rm{sym}}}(x)=\sum_k\zeta_k(x)n_k\otimes n_k$ with $\zeta_k\in C^\infty(\Om)$ and $n_k\in S^{d-1}$ such that $\{n_k\otimes n_k\}_{k=1}^{d(d+1)/2}$ is an appropriate basis of the the space of symmetric matrices.  
\end{proposition}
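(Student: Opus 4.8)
The plan is to adapt, essentially verbatim, the Laux--Otto argument \cite{LauOtt} for the convergence of the first variation of the heat content to the weak mean curvature term; the only genuinely new inputs have already been handled (strong $L^1$ compactness of the densities in Proposition \ref{prop:strong_limit}, and the pressure compactness in Theorem \ref{thm:limits}), so the task here is largely one of assembly. The first step is to put the left-hand side of \eqref{eq:convergence} into a symmetrized form. Proceeding exactly as in the computation in the proof of Lemma \ref{lem:estimates}(vii) --- using the change of variables $z\mapsto -z$ together with the fundamental theorem of calculus --- one rewrites $[(\nabla K_{\e_\t}\star\rho_2^\t)\rho_1^\t+(\nabla K_{\e_\t}\star\rho_1^\t)\rho_2^\t]\cdot\xi$ as a double integral against the weight $\rho_1^\t(x)\rho_2^\t(x+\sqrt{\e_\t}z)$ in which $\xi$ has been replaced by $z\cdot\nabla\xi^{\rm sym}(\cdot)\,z$ evaluated along the segment joining $x$ and $x+\sqrt{\e_\t}z$, times $|z|^2G(z)/\e_\t$. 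Writing $\nabla\xi^{\rm sym}=\sum_k\zeta_k\,n_k\otimes n_k$ as in the statement, one has $z\cdot\nabla\xi^{\rm sym}z=\sum_k\zeta_k\,|z\cdot n_k|^2$, and after rotating coordinates so that $n_k$ is aligned with $e_1$ the kernel $\sigma\sqrt{2\pi}\,|z\cdot n_k|^2G(z)$ becomes $J(z)$. This exhibits the left-hand side of \eqref{eq:convergence} as a finite sum, over $k$ and over the two phases, of localized heat-content expressions of precisely the type treated in \cite{LauOtt}.

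Next I would invoke the two results recorded in the Appendix. The energy convergence assumption \eqref{ass:energy_conv} enters through Lemma \ref{lem:pointwise_reduction}, which upgrades the strong $L^1$ convergence $\rho_i^\t\to\rho_i$ to the convergence statement needed to pass to the limit in the localized expressions above, trading the (unavailable) uniform convergence for pointwise a.e.\ convergence of the appropriate localized densities. Lemma \ref{lem:localized_hc_convergence} then identifies, for each direction $n_k$ and each phase $i$, the limit of the corresponding term as $\int_0^T\int_\Om\zeta_k\,\dd\sigma_J(D\rho_i)\,\dd t$ with $\sigma_J$ as in Definition \ref{def:sigma_measure}; the numerical factor $\tfrac14$ in \eqref{eq:convergence} is accounted for by the constants in $\nabla K_{\e_\t}$ and by the symmetrization. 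Summing over $i=1,2$ and over $k$ gives the first equality in \eqref{eq:convergence}.

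The second equality in \eqref{eq:convergence} is then a pointwise Gaussian-moment computation: using the explicit form $J(z)=\sigma\sqrt{2\pi}\,|z\cdot e_1|^2G(z)$ together with the decomposition $\nabla\xi^{\rm sym}=\sum_k\zeta_k\,n_k\otimes n_k$, one checks that, as measures on $\Om$,
$$\frac14\sum_k\zeta_k(x)\,\dd\sigma_J(D\rho_i)(x)=\frac{\sigma}{2}\Big(\tfrac{D\rho_1}{|D\rho_1|}\otimes\tfrac{D\rho_1}{|D\rho_1|}:\nabla\xi(x)+\nabla\cdot\xi(x)\Big)\,\dd|D\rho_i|(x),$$
the normalization of $J$ being chosen precisely so that the relevant Gaussian moment reproduces the surface tension constant, and using $D\rho_1/|D\rho_1|=-D\rho_2/|D\rho_2|$ a.e.\ so that the integrand is common to both phases (this produces the factor $|D\rho_1|+|D\rho_2|$ after summing $i=1,2$). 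Together with Theorem \ref{thm:limits}, this lets one pass to the limit in \eqref{eq:muskat_tau_eps} and obtain \eqref{eq:goal}, which completes the proof of Theorem \ref{thm:MAIN}.

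I expect the real obstacle to lie entirely inside Lemma \ref{lem:pointwise_reduction}: the quantities being controlled are products of the concentrating kernel $\nabla K_{\e_\t}\star\rho_j^\t$ with a density $\rho_i^\t$ that only converges strongly in $L^1$, so ordinary weak compactness yields at best a lower bound (by lower semicontinuity of the perimeter), and it is exactly the energy convergence \eqref{ass:energy_conv} that forces the matching upper bound and pins down the limit measure as $\tfrac14\sum_k\zeta_k\sigma_J(D\rho_i)$. This delicate step --- the De Giorgi-type argument converting the limit into one requiring only pointwise convergence --- is the heart of \cite{LauOtt} and is imported from there; everything else above is bookkeeping and standard weak-convergence manipulations.
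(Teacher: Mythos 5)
Your proposal follows the paper's argument essentially step for step: symmetrize and Taylor-expand as in Lemma \ref{lem:estimates}(vii) to pull out the kernel $z\otimes z\,G(z):\nabla\xi^{\rm sym}$, decompose $\nabla\xi^{\rm sym}$ in the rank-one basis $\{n_k\otimes n_k\}$, invoke Lemma \ref{lem:pointwise_reduction} (where the energy convergence assumption does its work) and Lemma \ref{lem:localized_hc_convergence} to pass to the limit for each $k$ and each phase, and finish with the trace identity $\sum_k\zeta_k(\nu\otimes\nu:n_k\otimes n_k+1)=\nu\otimes\nu:\nabla\xi+\nabla\cdot\xi$. Two small imprecisions worth noting, neither of which affects correctness: the factor $1/4$ comes from the BV algebra inside Lemma \ref{lem:localized_hc_convergence} (it is already present in that lemma's statement) rather than from the constants in $\nabla K_{\e_\t}$; and the pointwise identity you write for $\frac14\sum_k\zeta_k\,\dd\sigma_J(D\rho_i)$ needs the implicit per-$k$ rotation (replacing $e_1$ by $n_k$ inside $J$) to be literally correct — the paper bundles this into \eqref{to_show}, citing Laux--Otto Lemma 3.6 directly, but the content is the same Gaussian moment calculation you describe.
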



\begin{proof}
Let $\xi\in C^3([0,T]\times\Om;\R^d)$. Let us show that along a subsequence
\begin{align*}
\int_0^T\int_{\Om}\xi\cdot \left[(\nabla K_{\e_\t}\star\rho_2^\t)\rho_1^\t+(\nabla K_{\e_\t}\star\rho_1^\t)\rho_2^\t\right]\dd x\dd t\to  \sum_i\int_0^T\int_{\Om}\frac{1}{4}\sum_{k}\zeta_k(x)\dd\sigma_{J}(D\rho_i)(x)\dd t,
\end{align*}
${\rm{as\ }}\e_\t\da 0$.
This result is not straight forward, since  both the functional and the densities depend on $\tau$.  This difficulty is the key reason that we need the energy convergence assumption \eqref{ass:energy_conv}.  

Arguing exactly as in the proof of  \cite[Lemma 2.8]{LauOtt}, in order to show the convergence result \eqref{eq:convergence}, it is enough to show its time-independent version, i.e. 
$$\int_{\Om}\xi\cdot \left[(\nabla K_{\e_\t}\star\rho_2^\t)\rho_1^\t+(\nabla K_{\e_\t}\star\rho_1^\t)\rho_2^\t\right]\dd x\to \sum_i\int_{\Om}\frac{1}{4}\sum_{k}\zeta_k(x)\dd\sigma_{J}(D\rho_i)(x),
$$
${\rm{as\ }}\e_\t\da 0$, for $\sL^1$-a.e. $t\in[0,T].$ 

A computation similar to the one in the proof of Lemma \ref{lem:estimates}(vii) reveals 
\begin{align}\label{eq:taking_limit}
&\int_{\Om}\xi\cdot \left[(\nabla K_{\e_\t}\star\rho_2^\t)\rho_1^\t+(\nabla K_{\e_\t}\star\rho_1^\t)\rho_2^\t\right]\dd x\\
\nonumber&=\frac{\sigma\sqrt{2\pi}}{2\e_\t}\int_{\Om}\int_{\R^d}z G(z)\rho_2^{\t}(x+\sqrt{\e_\t}z)\rho_1^{\t}(x)\cdot\left[\xi(x)-\xi(x+\sqrt{\e_\t}z)\right]\dd z\dd x\\
\nonumber& = -\frac{\sigma\sqrt{2\pi}}{\sqrt{\epsilon_{\tau}}} \int_{\RR^d}  \int_{\Om} \rho_2^{\tau}(x+\sqrt{\epsilon_{\tau}} z)\rho_1^{\tau}(x)  \left[ z \otimes \nabla G(z) : \nabla \xi(x)\right]\dd x\ \dd t\\ 
\nonumber&+O\Big( \norm{D^2\xi}_{L^\infty} \int_{\RR^d}  |z|^2|\nabla G(z)| \int_{\Om}\rho_1^{\tau}(x)\rho_2^{\tau}(x+\sqrt{\epsilon_{\tau}} z)  \dd x\dd z\Big)
\end{align}
where, we were using a second order Taylor expansion (and the smoothness of $\xi$) in the last equality. Let us observe that the very last term in \eqref{eq:taking_limit} is converging to 0 as $\e_\t\da 0$ (due to the strong convergence $\rho_i^\t\to\rho_i$ in $L^1(\Om)$ and the fact that $\rho_1\rho_2\equiv 0$ a.e., see Proposition \ref{prop:strong_limit}).

\medskip

Therefore, it remains to prove 
$$
-\frac{\sigma\sqrt{2\pi}}{\sqrt{\epsilon_{\tau}}} \int_{\RR^d}  \int_{\Om} \rho_2^{\tau}(x+\sqrt{\epsilon_{\tau}} z)\rho_1^{\tau}(x)  \left[ z \otimes \nabla G(z) : \nabla \xi(x)\right]\dd x\dd z  \to \sum_i\int_{\Om}\frac{1}{4}\sum_{k}\zeta_k(x)\dd\sigma_{J}(D\rho_i)(x),
$$
as $\e_\t\da 0$. We note that
$$ -z\otimes \nabla G(z) : \nabla \xi= (z\otimes z)G(z) : \nabla \xi =(z\otimes z)G(z) : \nabla \xi^{\textrm{sym}} $$
where $ \nabla\xi^{\textrm{sym}}$ denotes the symmetric part of $\nabla \xi$. 
A basis for the space of $d\times d$ symmetric matrices is given by the $d+\frac{d(d-1)}{2}$ matrices 
$$\left\{e_i\otimes e_i\right\}_{i=1}^d \cup \left\{\frac{1}{2} (e_i+e_j)\otimes (e_i+e_j) \right\}_{1\leq i<j\leq d}$$
where $e_i\in \RR^d$ is the $i^{th}$ standard basis vector.
All of these basis matrices have the form $n\otimes n$ for some $n\in S^{d-1}$.  Thus, we may write
$$\nabla \xi^{\textrm{sym}}(x)=\sum_{k} \zeta_k(x) n_k\otimes n_k$$
where $\{n_k\}$ is any indexing of the above matrices and $\zeta_k(x) n_k\otimes n_k=P_k \nabla \xi^{\textrm{sym}}(x)$ where $P_k$ is the appropriate projection matrix.

 Therefore we have 
\begin{align*}
&\frac{\sigma\sqrt{2\pi}}{\sqrt{\epsilon_{\tau}}} \int_{\RR^d}  \int_{\Om} \rho_2^{\tau}(x+\sqrt{\epsilon_{\tau}} z)\rho_1^{\tau}(x)  \left[ z \otimes z\, G(z) : \nabla \xi(x)\right]\dd x\dd z\\
&=\frac{\sigma\sqrt{2\pi}}{\sqrt{\epsilon_{\tau}}}   \sum_{k}\int_{\RR^d} \int_{\Om}   \rho_2^{\tau}(x+\sqrt{\epsilon_{\tau}} z)\rho_1^{\tau}(x)  \zeta_k(x) |z\cdot n_k |^2 \,G(z) \dd x\dd z
\end{align*}

Since the functions $z\mapsto |z\cdot n_k|^2G(z)$ are rotation invariant, when integrating on $\R^d$, it is enough to consider only $n_k=e_1$, where $e_1$ is the first element of the standard basis on $\R^d$. Now, defining $J(z):=\sigma\sqrt{2\pi}|z\cdot e_1|^2G(z)$, we deduce the claim from Lemma \ref{lem:pointwise_reduction} and Lemma \ref{lem:localized_hc_convergence}.

\medskip

Now, let us remark that a computation completely parallel to \cite[Proof of Lemma 3.6.]{LauOtt} reveals furthermore that
\begin{equation}\label{to_show}
\frac{\sigma\sqrt{2\pi}}{\sqrt{\epsilon_{\tau}}} \int_{\RR^d}  \int_{\Om} \rho_2^{\tau}(x+\sqrt{\epsilon_{\tau}} z)\rho_1^{\tau}(x)   |z\cdot n |^2G(z) \zeta(x)\dd x\dd z \\
 \to \int_{\Om} \frac{\sigma}{2}\left(\Bigg{|}\frac{D\rho_1}{|D\rho_1|}\cdot n\Bigg{|}^2 +1\right)\zeta(x) \left(|D \rho_1| +|D \rho_2|  \right)\dd x
 \end{equation}
as  $\e_\t\to 0$, and so, in this case we would have
\begin{align*}
\frac{\sigma\sqrt{2\pi}}{\sqrt{\epsilon_{\tau}}} \sum_k \int_{\RR^d}  \int_{\Om} \rho_2^{\tau}(x+\sqrt{\epsilon_{\tau}} z)&\rho_1^{\tau}(x)   |z\cdot n_k |^2G(z) \zeta_k(x)\dd x\dd z \\ 
&\to  \int_{\Om} \frac{\sigma}{2} \sum_{k}\left( \frac{D\rho_1}{|D\rho_1|}\otimes \frac{D\rho_1}{|D\rho_1|}: n_k\otimes n_k +1\right)\zeta_k(x) \left(|D \rho_1| +|D \rho_2|  \right)\dd x.
\end{align*}
This would in fact complete the claim, as we can compute
\begin{align*}
\sum_{k}\left(\frac{D\rho_1}{|D\rho_1|}\otimes \frac{D\rho_1}{|D\rho_1|}: n_k\otimes n_k+1\right)\zeta_k(x)&=\frac{D\rho_1}{|D\rho_1|}\otimes \frac{D\rho_1}{|D\rho_1|}:\sum_{k}\zeta_k(x) n_k\otimes n_k +\sum_{k} \zeta_k(x)\tr(n_k\otimes n_k)\\
&=\frac{D\rho_1}{|D\rho_1|}\otimes \frac{D\rho_1}{|D\rho_1|}:\nabla\xi^{\textrm{sym}}(x)+\tr\big(\nabla\xi^{\textrm{sym}}(x)\big)\\
&=\frac{D\rho_1}{|D\rho_1|}\otimes \frac{D\rho_1}{|D\rho_1|}:\nabla\xi(x)+\nabla \cdot \xi(x)
\end{align*}
where we have used $\tr(n_k\otimes n_k)=1$ and the fact that the antisymmetric parts of $\nabla \xi$ are annihilated by the above operations.
\end{proof}

The conclusion in the previous proposition  is made by the following two lemmas.

\begin{lem}\label{lem:pointwise_reduction}  If $\bm{\rho}^{\tau} \to \bm{\rho}$ in $L^1(\Om)\times L^1(\Om)$ and $\HC_{\epsilon_{\tau}}(\bm{\rho}^{\tau})\to \cE(\bm{\rho})$ as $\e_\t\da 0$, and  $J$ is a nonnegative kernel with rapid decay (i.e. $J(z)\leq |P(z)|G(z)$ for some polynomial $P$) then
\begin{equation}  
\lim_{\tau\to 0} \frac{1}{\sqrt{\epsilon_{\tau}}}\int_{\RR^d} J(z)\Big| \int_{\Om} \Big(\rho_1^{\tau}(x+\sqrt{\epsilon_{\tau}} z)\rho_2^{\tau}(x)-\rho_1(x+\sqrt{\epsilon_{\tau}} z)\rho_2(x) \Big) \dd x\Big| \dd z=0 
\end{equation} 
\end{lem}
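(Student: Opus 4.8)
The plan is to convert the assertion into a soft weak-compactness statement for which the energy equality \eqref{ass:energy_conv} supplies exactly the missing ingredient. Extending the densities by zero outside $\Om$, set for $z\in\RR^d$
\[
f_\t(z):=\frac{1}{\sqrt{\e_\t}}\int_\Om\rho_1^\t(x+\sqrt{\e_\t}z)\,\rho_2^\t(x)\dd x,\qquad
g_\t(z):=\frac{1}{\sqrt{\e_\t}}\int_\Om\rho_1(x+\sqrt{\e_\t}z)\,\rho_2(x)\dd x ,
\]
so that the quantity in the statement is precisely $\int_{\RR^d}J(z)\,|f_\t(z)-g_\t(z)|\dd z$, which we must show tends to $0$. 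I would prove four facts: (i) a uniform bound $0\le f_\t(z),g_\t(z)\le C(1+|z|)$; (ii) $g_\t\to g$ a.e.\ for a fixed $g\in L^1_{\mathrm{loc}}$; (iii) a lower semicontinuity estimate $\liminf_\t f_\t(z)\ge g(z)$ for a.e.\ $z$; and (iv) the energy identity $\int_{\RR^d}G\,f_\t\dd z\to\int_{\RR^d}G\,g\dd z$. A Scheffé/Fatou argument then closes the proof.

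For (i): by Proposition \ref{prop:characteristic} one has $\rho_i^\t\in\{0,1\}$ and $\rho_1^\t+\rho_2^\t=1$ a.e.\ in $\Om$, hence $\rho_1^\t(x+w)\rho_2^\t(x)=\bigl(\rho_1^\t(x+w)-\rho_1^\t(x)\bigr)_+$ whenever $x,x+w\in\Om$, so $f_\t(z)\le\tfrac{1}{2\sqrt{\e_\t}}\|\rho_1^\t(\cdot+\sqrt{\e_\t}z)-\rho_1^\t\|_{L^1(\RR^d)}$; combining the per-step form of the estimate in Lemma \ref{lem:estimates}(v) with the monotonicity of $\HC_\e$ along the scheme (so $\HC_{\e_\t}(\bm{\rho}^\t)$ is bounded by the initial energy, finite since $\rho_{i,0}\in BV$) yields $f_\t(z)\le C(1+|z|)$ uniformly in $\t$; for $g_\t$ the same bound is immediate from $\rho_i\in BV(\Om;\{0,1\})$. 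For (ii), since $\rho_i$ are fixed $BV$ functions, classical difference-quotient theory gives $g_\t(z)\to g(z):=\int_{\Gamma}(z\cdot\nu_1)_+\dd\sH^{d-1}$ for a.e.\ $z$, where $\Gamma=\partial^*\{\rho_1>0\}\cap\Om$ and $\nu_1$ is the measure-theoretic unit normal towards $\{\rho_1>0\}$; the traces of $\rho_1$ on $\partial\Om$ cancel in this limit, so only the interior perimeter enters, consistently with Definition \ref{def:weak_sol}. For (iii): restricting the integral defining $f_\t$ to $\Om'\Subset\Om$, once $\sqrt{\e_\t}|z|<\mathrm{dist}(\Om',\partial\Om)$ the integrand equals $\bigl(\rho_1^\t(\cdot+\sqrt{\e_\t}z)-\rho_1^\t\bigr)_+$ on $\Om'$; since $\rho_1^\t\to\rho_1$ in $L^1$ and $\sqrt{\e_\t}|z|\to0$ we have $\tfrac{1}{\sqrt{\e_\t}}(\rho_1^\t(\cdot+\sqrt{\e_\t}z)-\rho_1^\t)\to D\rho_1\cdot z$ in $\cD'(\Om')$, and lower semicontinuity of the total variation of $(\,\cdot\,)_+$ gives $\liminf_\t f_\t(z)\ge\int_{\Gamma\cap\Om'}(z\cdot\nu_1)_+\dd\sH^{d-1}$; letting $\Om'\uparrow\Om$ gives (iii).

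Step (iv) is where \eqref{ass:energy_conv} is used. Unwinding \eqref{def:HC} gives $\int_{\RR^d}G\,f_\t\dd z=\tfrac{1}{\sigma\sqrt{2\pi}}\HC_{\e_\t}(\bm{\rho}^\t)$ and $\int_{\RR^d}G\,g_\t\dd z=\tfrac{1}{\sigma\sqrt{2\pi}}\HC_{\e_\t}(\bm{\rho})$; by \eqref{ass:energy_conv} the former converges to $\tfrac{1}{\sigma\sqrt{2\pi}}\cE_s(\bm{\rho})$, and by the $\Gamma$-convergence of the heat content (constant recovery sequence for the $\Gamma$-$\limsup$, $\Gamma$-$\liminf$ for the lower bound) the latter also converges to $\tfrac{1}{\sigma\sqrt{2\pi}}\cE_s(\bm{\rho})$, so by (i) and dominated convergence $\int G\,f_\t\to\int G\,g$. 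To conclude: by (i) the Radon measures $f_\t\,\dd z$ are locally uniformly bounded with uniformly small mass near infinity, so any vague subsequential limit $\mu$ satisfies $\mu\ge g\,\dd z$ (Fatou, using (iii)) and $\int G\,\dd\mu=\int G\,g\,\dd z$ (using (iv)); since $G>0$ everywhere this forces $\mu=g\,\dd z$, hence $f_\t\,\dd z\rightharpoonup g\,\dd z$ vaguely along the full sequence and, again by (i), $\int J\,f_\t\to\int J\,g$ for every rapidly decaying $J$. Finally (iii) together with $g_\t\to g$ gives $(g_\t-f_\t)_+\to0$ a.e., whence $\int J\,(g_\t-f_\t)_+\to0$ by dominated convergence; writing $|f_\t-g_\t|=(f_\t-g_\t)+2(g_\t-f_\t)_+$ we obtain $\int J\,|f_\t-g_\t|=\bigl(\int J\,f_\t-\int J\,g_\t\bigr)+2\int J\,(g_\t-f_\t)_+\to0$.

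The main obstacle — and the precise reason \eqref{ass:energy_conv} is indispensable — is that for $\t>0$ the discrete densities $\bm{\rho}^\t$ are merely characteristic functions with no uniform control on their perimeters as $\e_\t\da0$, so $f_\t$ does not converge pointwise and one cannot pass to the limit inside the integral; the energy equality says precisely that no surface energy is lost or concentrated in the limit, which is what pins the vague limit of $f_\t\,\dd z$ down to $g\,\dd z$. A secondary technical point, absent in the flat-torus setting of \cite{LauOtt}, is the bookkeeping of the boundary $\partial\Om$; this is handled by the localization $\Om'\Subset\Om$ in (iii) and the cancellation of the $\partial\Om$-traces noted in (ii).
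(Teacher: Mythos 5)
Your proof is correct and supplies a complete argument where the paper merely cites \cite[Lemma 3.7]{LauOtt}; the mechanism you use --- a uniform $O(1+|z|)$ bound on the directional heat--content densities $f_\tau$ and $g_\tau$, the a.e.\ lower-semicontinuity inequality $\liminf_\tau f_\tau(z)\ge g(z)$ obtained by passing the difference quotients of $\rho_1^\tau$ to the measure $z\cdot D\rho_1$ and using l.s.c.\ of the positive part, the identity $\int G f_\tau\dd z=\tfrac{1}{\sigma\sqrt{2\pi}}\HC_{\e_\tau}(\bm\rho^\tau)$ which brings in \eqref{ass:energy_conv}, and the Scheff\'e closing step $|a-b|=(a-b)+2(b-a)_+$ --- is exactly the lower-bound-plus-energy-pinning structure of the cited Laux--Otto lemma. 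One imprecision worth fixing: the measures $f_\tau\,\dd z$ do \emph{not} have ``uniformly small mass near infinity'' (their total mass is infinite since $f_\tau(z)$ may grow like $|z|$); what step (i) actually gives, and what your argument uses, is that $\int_{|z|>R}J(z)f_\tau(z)\dd z$ is small uniformly in $\tau$ for each fixed rapidly decaying $J$, which is what upgrades vague convergence on compacta to convergence against such $J$. It would also be worth making explicit in step (iii) that the passage from distributional convergence of the difference quotients to weak-$\star$ convergence as measures on $\Om'$ (needed before invoking l.s.c.\ of the positive part) is supplied precisely by the uniform total-variation bound from (i).
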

\begin{proof} 
The proof of this result is the same as the one of \cite[Lemma 3.7]{LauOtt}.
\end{proof}

Thanks to Lemma \ref{lem:pointwise_reduction} we just need the following \emph{pointwise} convergence result.  This applies to the multiphase case and is stronger than what is needed here.  Similarly to \cite[Lemma 2.8, Lemma 3.6]{LauOtt} we can formulate the following result. 

\begin{lem}\label{lem:localized_hc_convergence} Suppose that $\rho_1, \rho_2\in \rm{BV}(\Om;\{0,1\})$ such that $\rho_1(x)\rho_2(x)=0$ for a.e. $x\in\Om$.   Then for any smooth function $\zeta:\Om\to\R$ and any even nonnegative kernel $J:\R^d\to\R$ with rapid decay we have 
\begin{align*} 
&\lim_{\epsilon\to 0} \int_{\Om} \zeta(x)\left[ \rho_1(x) (J_{\epsilon} \star\rho_2 (x)\right]\dd x=\frac{1}{4}\left[\int_{\Om}\zeta(x)\dd\sigma_J(D\rho_1)(x)+\int_{\Om}\zeta(x)\dd\sigma_J(D\rho_2)(x)\right]\\
&-\frac{1}{4}\int_{\Om}\zeta(x)\dd\sigma_J(D(\rho_1+\rho_2))(x)\\
\end{align*}
where we define the Radon measure $\sigma_J(D\rho_i)\in\sM(\Om)$ as in Definition \ref{def:sigma_measure} and we have used the notation $J_\e(z):=J(z/\e).$
\end{lem}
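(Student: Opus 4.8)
The plan is to reduce the statement, via an elementary polarization identity available because $\rho_1$, $\rho_2$ and $\rho_0:=\rho_1+\rho_2$ are all $\{0,1\}$-valued (for $\rho_0$ this uses $\rho_1\rho_2=0$ a.e.), to the asymptotics of weighted $L^1$ difference quotients of a single $BV$ characteristic function, which is classical.

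First I would unwind the left-hand side. Writing out the convolution and passing to the parabolic scale $\sqrt{\epsilon}$ exactly as in the proof of Lemma~\ref{lem:estimates}(vii) (so that the normalizing $\epsilon^{-1/2}$ built into $K_\epsilon$ is exposed), the quantity to analyze is
\[
A_\epsilon(\zeta):=\frac{1}{\sqrt{\epsilon}}\int_{\R^d}\int_{\R^d}\zeta(x)\,\rho_1(x)\,\rho_2(x-\sqrt{\epsilon}z)\,J(z)\,\dd x\,\dd z .
\]
Since $J$ is even, the substitution $x\mapsto x+\sqrt{\epsilon}z$ together with $z\mapsto -z$ identifies $A_\epsilon(\zeta)$ with the same integral in which $\zeta(x)$ is replaced by $\zeta(x-\sqrt{\epsilon}z)$; as $|\zeta(x)-\zeta(x-\sqrt{\epsilon}z)|\le C\sqrt{\epsilon}|z|$ while, using $\rho_2\le 1-\rho_1$, $\int_{\R^d}\rho_1(x)\rho_2(x-\sqrt{\epsilon}z)\,\dd x\le\|\rho_1-\rho_1(\cdot-\sqrt{\epsilon}z)\|_{L^1(\R^d)}\to 0$ for each fixed $z$, with the uniform bound $\le\sL^d(\Om)$, dominated convergence in $z$ (using $|z|J(z)\in L^1$) yields $A_\epsilon(\zeta)=\tfrac12 B_\epsilon(\zeta)+o(1)$, where $B_\epsilon(\zeta)$ is the same integral with $\rho_1(x)\rho_2(x-\sqrt{\epsilon}z)$ replaced by its symmetrization $\rho_1(x)\rho_2(x-\sqrt{\epsilon}z)+\rho_1(x-\sqrt{\epsilon}z)\rho_2(x)$. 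I would then invoke the pointwise identity
\[
\rho_1(x)\rho_2(y)+\rho_1(y)\rho_2(x)=\tfrac12\big(|\rho_1(x)-\rho_1(y)|+|\rho_2(x)-\rho_2(y)|-|\rho_0(x)-\rho_0(y)|\big),
\]
valid for $\{0,1\}$-valued, non-overlapping $\rho_1,\rho_2$ (use $u^2=u$ and $|u(x)-u(y)|^2=|u(x)-u(y)|$), to get $B_\epsilon(\zeta)=\tfrac12\big(C_\epsilon^{\rho_1}(\zeta)+C_\epsilon^{\rho_2}(\zeta)-C_\epsilon^{\rho_0}(\zeta)\big)$ with $C_\epsilon^{u}(\zeta):=\frac1{\sqrt{\epsilon}}\int\int\zeta(x)\,|u(x)-u(x-\sqrt{\epsilon}z)|\,J(z)\,\dd x\,\dd z$.

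The remaining ingredient is the limit of $C_\epsilon^{u}(\zeta)$ for a single $u=\chi_A\in BV(\R^d;\{0,1\})$ (applied to $u=\rho_1,\rho_2,\rho_0$, the $\rho_i$ extended by $0$ off $\Om$). For fixed $z\ne 0$, with $t=\sqrt{\epsilon}|z|$ and $h=z/|z|$, the classical directional differentiability of $BV$ functions --- $\tfrac1t\|u(\cdot+th)-u\|_{L^1}\to|D_hu|(\R^d)=\int|h\cdot\nu_A|\,\dd|D\chi_A|$ as $t\to 0$, in its weighted form --- gives $\frac1{\sqrt{\epsilon}}\int\zeta(x)|u(x)-u(x-\sqrt{\epsilon}z)|\,\dd x\to\int_{\partial^*A}\zeta\,|z\cdot\nu_A|\,\dd\sH^{d-1}$, with $\nu_A=D\chi_A/|D\chi_A|$; the weight is harmless, since swapping $\zeta(x)$ for $\zeta(x-\sqrt{\epsilon}z)$ perturbs the integral by $O(\sqrt{\epsilon}\,|z|^2)$. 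Together with the uniform bound $\frac1{\sqrt{\epsilon}}\int\zeta|u(x)-u(x-\sqrt{\epsilon}z)|\,\dd x\le\|\zeta\|_\infty|z|\,|D\chi_A|(\R^d)$ and $|z|J(z)\in L^1(\R^d)$, dominated convergence in $z$ gives $C_\epsilon^{u}(\zeta)\to\int_{\R^d}J(z)\int_{\partial^*A}\zeta|z\cdot\nu_A|\,\dd\sH^{d-1}\,\dd z=\int_\Om\zeta\,\dd\sigma_J(Du)$. Collecting the factors $\tfrac12\cdot\tfrac12$ then yields $A_\epsilon(\zeta)\to\tfrac14\big(\sigma_J(D\rho_1)(\zeta)+\sigma_J(D\rho_2)(\zeta)-\sigma_J(D\rho_0)(\zeta)\big)$, which is the assertion.

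I expect the main obstacle to be exactly this last step: the weighted/localized difference-quotient limit for a single $BV$ characteristic function, uniformly in the direction $z$. One upgrades the standard unweighted equality $\tfrac1t\|\chi_A(\cdot+th)-\chi_A\|_{L^1}\to\int|h\cdot\nu_A|\,\dd\sH^{d-1}$ to weak-$*$ convergence of the measures $\tfrac1t|\chi_A(\cdot+th)-\chi_A|\,\sL^d$ towards $|h\cdot\nu_A|\,\sH^{d-1}\mres\partial^*A$ --- the lower-semicontinuity inequality becomes an equality because the total masses agree --- or one argues by blow-up at $\sH^{d-1}$-a.e.\ point of $\partial^*A$, precisely as in \cite[Lemma~2.8, Lemma~3.6]{LauOtt} and \cite{AmbFusPal}. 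Here the boundary contributions produced by the zero-extension of the $\rho_i$ off $\Om$ are exactly absorbed by the $-\tfrac14\sigma_J(D\rho_0)$ term; in the genuinely two-phase setting $\rho_1+\rho_2=1$ in $\Om$ this term vanishes as a measure on $\Om$, and the right-hand side collapses to $\tfrac12\,\sigma_J(D\rho_1)(\zeta)$, the $\sigma_J$-surface measure of the common interface.
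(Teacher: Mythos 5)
Your proof is correct, and the first three of your four steps coincide with the paper's: the same symmetrization-in-$z$ trick (using evenness of $J$, the change of variables $x\mapsto x+\sqrt{\epsilon}z$, $z\mapsto -z$, and $L^1$-continuity of translation combined with $\rho_1\rho_2=0$ to control the error), the same pointwise polarization identity
\[
\rho_1(x)\rho_2(y)+\rho_1(y)\rho_2(x)=\tfrac12\bigl(|\rho_1(x)-\rho_1(y)|+|\rho_2(x)-\rho_2(y)|-|(\rho_1+\rho_2)(x)-(\rho_1+\rho_2)(y)|\bigr),
\]
and the same reduction to proving
\[
\lim_{\epsilon\to 0}\frac1\epsilon\int_{\R^d}J(z)\int_\Om\zeta(x)\,|\chi(x+\epsilon z)-\chi(x)|\,\dd x\,\dd z=\int_\Om\zeta\,\dd\sigma_J(D\chi)
\]
for a single $\chi\in BV(\Om;\{0,1\})$.

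Where you diverge is precisely where you anticipated the difficulty: the proof of that last weighted difference-quotient limit. You propose to establish, for each fixed direction, weak-$*$ convergence of the rescaled difference-quotient measures $\tfrac1t|\chi(\cdot+th)-\chi|\,\sL^d$ to $|h\cdot\nu|\,\sH^{d-1}\mres\partial^*A$ (via lower semicontinuity plus mass matching, or via blow-up at $\sH^{d-1}$-a.e.\ point of $\partial^* A$ as in Laux--Otto), then integrate in $z$ by dominated convergence using the bound $\tfrac1\epsilon\int|\chi(x+\epsilon z)-\chi(x)|\dd x\le|z|\,|D\chi|(\R^d)$ and $|z|J(z)\in L^1$. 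The paper deliberately avoids this and instead mollifies $\chi$ to $\chi_\delta$, uses strict BV convergence of $\chi_\delta\to\chi$, and obtains matching upper and lower bounds by applying Jensen's inequality once in the translation direction and once in the mollification direction; the authors note explicitly that this is chosen as an alternative to the Laux--Otto disintegration/blow-up route, which is the one you lean on. Both arguments are sound. Yours is perhaps more standard and more directly tied to the geometric interpretation of $\sigma_J$ as a weighted surface measure; the paper's is shorter and avoids the anisotropic reshetnyak-type limit by staying entirely at the level of $L^1$-norms and Jensen. The one loose end in your write-up is that the per-direction weak-$*$ convergence claim is asserted rather than proved, but the lsc-plus-mass-matching upgrade you indicate does work and is indeed the standard way to get it.
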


The proof supplied below is different from the one by Laux-Otto in \cite[Lemma 2.8, Lemma 3.6]{LauOtt}. Instead of disintegrating on $\RR^d$ and using one-dimensional arguments we obtain upper and lower bounds using mollifiers. 

\begin{proof}
We begin by showing 
$$\lim_{\epsilon\to 0} \int_{\Om} \zeta(x)\left[ \rho_1(x) J_{\epsilon} \star\rho_2 (x)\right]\dd x=\lim_{\epsilon\to 0} \int_{\Om} \frac{1}{2}\zeta(x)\left[ \rho_1(x) J_{\epsilon} \star\rho_2 (x)+\rho_2(x) J_{\epsilon} \star\rho_1 (x)\right]\dd x$$
which amounts to showing that
$$\lim_{\epsilon\to 0} \int_{\Om} \zeta(x)\left[ \rho_1(x) J_{\epsilon} \star\rho_2 (x)- \rho_2(x) J_{\epsilon} \star\rho_1 (x)\right]\dd x=0.$$
Expanding out the convolution we have
$$\frac{1}{\epsilon}\int_{\Om}\int_{\RR^d} J(z) \left[ \rho_1(x)\rho_2(x+\epsilon z)\zeta(x) -\rho_1(x+\epsilon z)\rho_2(x)\zeta(x) \right]\dd z\dd x.$$
Changing variables $x\mapsto x-\epsilon z$ and then $z\mapsto -z$ in the second term of the integral we get
\begin{align*}
\Bigg{|}\frac{1}{\epsilon}\int_{\Om}\int_{\RR^d} J(z)\rho_1(x)\rho_2(x+\epsilon z) &\left[ \zeta(x)-\zeta(x+\epsilon z) \right]\dd z\dd x\Bigg{|}\\
&\le\|\nabla\zeta\|_\infty\Bigg{|}\int_{\Om}\int_{\RR^d}|z|J(z)\rho_1(x)\rho_2(x+\epsilon z)\dd z\dd x\Bigg{|}.\\
&=O\Big(\epsilon\norm{\nabla \zeta}_{\infty}\hc_{\epsilon}(\rho_1,\rho_2)\Big)
\end{align*}
Thus, the dominated convergence theorem with the fact that $\rho_1\rho_2=0$ a.e. yield that the quantity vanishes as $\epsilon \to 0$. 

Now we can restrict our attention to the limit
$$\lim_{\epsilon\to 0} \int_{\Om} \frac{1}{2}\zeta(x)\left[ \rho_1(x) J_{\epsilon} \star\rho_2 (x)+\rho_2(x) J_{\epsilon} \star\rho_1 (x)\right]\dd x$$
Since $\rho_i\in\rm{BV}(\Om;\{0,1\})$ and $\rho_1(x)\rho_2(x)=0$ a.e.,  
\begin{align*}
&\rho_1(x+\epsilon z)\rho_2(x)+\rho_1(x)\rho_2(x+\epsilon z)\\
&=\frac{1}{2}|\rho_1(x+\epsilon z)-\rho_1(x)| + \frac{1}{2}|\rho_2(x+\epsilon z)-\rho_2(x)|-\frac{1}{2}|(\rho_1+\rho_2)(x+\epsilon z)-(\rho_1+\rho_2)(x)|,
\end{align*}
which follows from directly by evaluating both sides.  Thus, it suffices to prove 
\begin{equation}\label{eq:to_show}
\lim_{\epsilon\to 0} \frac{1}{\epsilon} \int_{\RR^d} J(z) \int_{\Om}  \zeta(x)|\chi(x+\epsilon z)-\chi(x)|\dd x \dd z=\int_{\Om} \zeta(x)\dd\sigma_J(D\chi)(x)
\end{equation}
 for any $\chi \in \rm{BV}(\Om; \{0,1\})$.

For $\delta>0$ let $\eta_{\delta}$ be a smooth approximation to the identity, and set $\chi_{\delta}=\eta_{\delta} \star\chi$, such that $\chi_\d\to\chi$, as $\d\to 0$ in the sense of strict convergence of BV functions (i.e. $\chi_\d\to\chi$ in $L^1$ and $\int_\Om|D\chi_\d|\to\int_\Om|D\chi|$ as $\d\to 0$;  cf. \cite[Definition 3.14]{AmbFusPal}).

Then, we also have
\begin{equation}\label{eq:1}
\frac{1}{\epsilon} \int_{\RR^d} J(z) \int_{\Om}  \zeta(x)|\chi(x+\epsilon z)-\chi(x)| \dd x \dd z=\lim_{\delta \to 0} \int_{\RR^d} J(z) \int_{\Om}\zeta(x)  \frac{1}{\epsilon}|\chi_{\delta}(x+\epsilon z)-\chi_{\delta}(x)| \dd x \dd z.
\end{equation}
Without loss of generality, one may suppose that $\zeta\ge 0$. By Jensen's inequality, the above is 
\begin{align*}
&\leq \lim_{\delta \to 0} \int_{\RR^d} J(z) \int_{\Om} \zeta(x)   \int_{0}^1 \Big|z\cdot  \nabla \chi_{\delta}(x+\epsilon tz) \Big| \dd t \dd x \dd z\\
&=   \lim_{\delta \to 0} \int_0^1 \int_{\R^d} J(z) \int_{\Om}\zeta(x-\epsilon tz) | z\cdot \nabla \chi_{\delta}(x)| \dd x \dd z \dd t\\
&=\int_{\Om}\zeta(x) \dd\sigma_J(D\chi)(x) +O\bigg(\epsilon \norm{\nabla \zeta}_{\infty} \int_{\Om}  |D \chi| \bigg).
\end{align*}
Taking $\epsilon \to 0$ we get the desired upper bound for the limit. 

Conversely, for $\delta>0$ fixed we have from Jensen's inequality
\begin{align*}
&\frac{1}{\epsilon} \int_{\RR^d} J(z) \int_{\Om}  \zeta(x)|\chi(x+\epsilon z)-\chi(x)| \dd x \dd z \\
&\geq \int_{\RR^d}J(z) \int_{\Om}\zeta(x) \frac{1}{\epsilon} |\chi_{\delta}(x+\epsilon z)-\chi_{\delta}(x)| \dd x \dd z+O\bigg(\delta \norm{\nabla \zeta}_{\infty} \int_{\Om}  |D \chi| \bigg).
\end{align*}
Taking $\epsilon\to 0$ we get
$$ \int_{\RR^d} J(z) \int_{\Om}\zeta(x) |z\cdot \nabla \chi_{\delta}(x)|  \dd x \dd z+O\bigg(\delta \norm{\nabla \zeta}_{\infty} \int_{\Om}  |D \chi| \bigg). $$
Finally,
$$\lim_{\delta \to 0} \int_{\RR^d} J(z) \int_{\Om} \zeta(x) |z\cdot \nabla \chi_{\delta}(x)| \dd x \dd z +O\bigg(\delta \norm{\nabla \zeta}_{\infty} \int_{\Om}  |D \chi| \bigg) = \int_{\Om}\zeta(x)\dd \sigma_J(D\chi)(x).   $$
The result follows.

\end{proof}

\end{document}